	\tikzstyle{V}=[fill=black,circle,scale=0.4, outer sep = 4pt]
\newtheorem{thm}{Theorem}[section]
\newtheorem{prop}[thm]{Proposition}
\newtheorem{cor}[thm]{Corollary}
\newtheorem{lemma}[thm]{Lemma}
\theoremstyle{remark}
\newtheorem{rmk}[thm]{Remark}
\newtheorem{example}[thm]{Example}
\newtheorem{examples}[thm]{Examples}
\theoremstyle{definition}
\newtheorem{defn}{Definition}
\DeclareMathOperator{\supp}{supp}
\newcommand{\z}{^{(0)}}
\renewcommand{\2}{^{(2)}}
\newcommand{\inv}{^{-1}}
\newcommand{\bi}{\begin{itemize}}
\newcommand{\ei}{\end{itemize}}
\newcommand{\be}{\begin{enumerate}}
\newcommand{\ee}{\end{enumerate}}
\renewcommand{\S}{\mathcal{S}}
\newcommand{\T}{\mathcal{T}}
\renewcommand{\H}{\mathcal{H}}
\newcommand{\G}{\mathcal{G}}
\newcommand{\K}{\mathcal{K}}
\newcommand{\id}{\operatorname{id}}
\newcommand{\Iso}{\operatorname{Iso}}
\newcommand{\norm}[1]{\left\|#1\right\|}  
\newcommand{\abs}[1]{\lvert #1 \rvert}
\date{\today}
\newcommand{\under}[2]{\ensuremath{%
#2/#1}}
\newcommand\blfootnote[1]{%
  \begingroup
  \renewcommand\thefootnote{}\footnote{#1}%
  \addtocounter{footnote}{-1}%
  \endgroup
}
\begin{document}

\title{Cartan subalgebras for non-principal twisted groupoid $C^*$-algebras}

\author{A.~Duwenig\footnotemark[\value{footnote}]\endnotemark[1], 
E.~Gillaspy\footnotemark[\value{footnote}]\endnotemark[2],
R.~Norton\footnotemark[\value{footnote}]\endnotemark[3],
S.~Reznikoff\footnotemark[\value{footnote}]\endnotemark[4],
and S.~Wright\footnotemark[\value{footnote}]\endnotemark[3]}

\endnotetext[1]{University of Victoria}
\endnotetext[2]{University of Montana}
\endnotetext[3]{Fitchburg State University}
\endnotetext[4]{Kansas State University}

\maketitle

\blfootnote{This research was begun during the 2018 BIRS workshop ``Women in Operator Algebras,'' which was partially supported by the AWM's ADVANCE grant, and continued while the authors were in residence at the Mathematical Sciences Research Institute in Berkeley, California, during the summer of 2019.  Funding for the latter stay was provided by NSA grant H98230-19-1-0119, NSF grant 1440140, the Lyda Hill Foundation, the McGovern Foundation, and Microsoft Research. The second author was also partially supported by NSF grant DMS-1600749, and the fourth author was partially supported by the Simons Foundation grant 360563. }

\begin{abstract}
{Renault proved in 2008 \cite[Theorem 5.2]{renault-cartan}} that if $\G$ is a topologically principal groupoid, then $C_0(\G\z)$ is a Cartan subalgebra in $C^*_r(\G, \Sigma)$ for any twist $\Sigma$ over $\G$.  However, there are many groupoids which are not topologically principal, yet their (twisted) $C^*$-algebras admit Cartan subalgebras.  This paper gives a dynamical description of {a class of} such Cartan subalgebras, by identifying conditions on a 2-cocycle $c$ on $\G$ and a subgroupoid $\S \subseteq \G$ under which $C^*_r(\S, c)$ is Cartan in $C^*_r(\G, c)$.   When $\G$ is a discrete group,  we also describe the Weyl groupoid and twist associated to these Cartan pairs, under mild additional hypotheses.
\end{abstract}

\section{Introduction}
A {\em Cartan subalgebra} in a $C^*$-algebra $A$ is a maximal abelian subalgebra  $B$ of $A$ satisfying certain regularity conditions (see Definition \ref{def:cartan} below).
Inspired by the work of Feldman and Moore \cite{feldman-moore} on  Cartan subalgebras in von Neumann algebras, the theory of Cartan subalgebras in $C^*$-algebras was initiated by Renault in \cite{renault}  and subsequently developed by Kumjian \cite{c*-diagonals} and Renault \cite{renault-cartan}.  

Identifying a Cartan subalgebra in a $C^*$-algebra $A$ often facilitates a   concrete  understanding of $A$, for several reasons.  First, the existence of a Cartan subalgebra $B \subseteq A$ implies that $A$ has a dynamical model  \cite{renault-cartan}, and in many situations (e.g.~\cite{matsumoto-matui,brownlowe-carlsen-whittaker}), a $C^*$-isomorphism between Cartan pairs   is equivalent to an isomorphism of the underlying dynamics.  Second, information about~$B$ can often be extended to~$A$:  for example, \cite{BNRSW}, \cite{nagy-reznikoff-1}, and \cite{brown-nagy-reznikoff} identify situations where injectivity of a representation lifts from  a  Cartan subalgebra to the entire $C^*$-algebra.
 Third, the presence of a Cartan subalgebra $B$  often enables one to apply the machinery of Elliott's classification program to $A$ (e.g., \cite{li-renault,  li-cartan-existence}).
 In particular,  \cite{li-cartan-existence} shows that for certain $C^*$-algebras,  having a Cartan subalgebra is equivalent to satisfying the Universal Coefficient Theorem, and thus implies that $A$ is indeed classified by its Elliott invariant.
Due to these applications, among others, there has been extensive research into Cartan subalgebras in recent years.

 Renault showed  \cite[Theorem 5.2]{renault-cartan} that  every separable Cartan pair arises from a twist over a topologically principal,  second countable, locally compact  Hausdorff, \'{e}tale    groupoid, and that conversely, 
 every   reduced   $C^*$-algebra of such a groupoid has a canonical Cartan subalgebra. 
 However, many natural Cartan algebras appear in the $C^*$-algebras of groupoids that are not topologically principal.  For example, the rotation algebra $A_\theta $ can be described as the $C^*$-algebra of a topologically principal groupoid $\mathbb{T} \rtimes_\theta \mathbb{Z}$, or as a twisted group $C^*$-algebra $C^*(\mathbb{Z}^2, c_\theta)$.  From the first picture and \cite[Theorem 5.2]{renault-cartan}, it is clear that $A_\theta$ has a Cartan subalgebra, but the second description of $A_\theta$ gives no hint of this.

 Another example comes from the setting of  graph $C^*$-algebras, which have a groupoid model under mild assumptions on the graph \cite{kprr}. The cycline subalgebra of a graph $C^*$-algebra $C^*(E) \cong C^*_r(\G_E)$ (introduced in \cite{nagy-reznikoff-1} as the ``abelian core'') is always Cartan in $C^*(E)$, though the  groupoid   $\G_E$ associated to $E$ is topologically principal only if the graph $E$  satisfies Condition (L), in which case the cycline subalgebra coincides with the diagonal.     In any case, the cycline subalgebra is generated by the interior of the isotropy subgroupoid of $\G_E$;  \cite[Corollary 4.5]{BNRSW} provides conditions under which for an arbitrary locally compact Hausdorff \'{e}tale groupoid $\G$,  
 $C^*_r(\Iso(\G)^\circ)$ is Cartan in $C^*_r(\G)$. 
 
 Inspired by these examples, we set out to find a dynamical description -- i.e., a description at the level of the groupoid $\G$ -- of Cartan subalgebras inside the twisted $C^*$-algebras of groupoids that need not be topologically principal. Our main result is as follows; see Definition \ref{def:imm-cent} for the meaning of ``immediately centralizing.''
 
    \newtheorem*{tg-main}{Theorem \ref{thm:main}}
\begin{tg-main}
Let $\G$ be a second countable, locally compact Hausdorff, \'etale groupoid, and let $c$ be a 2-cocycle on $\G$.  Suppose $\S $ is maximal among abelian subgroupoids of $\Iso(\G) $ on which $c$ is symmetric.  If $\S$ is clopen,   normal, and immediately centralizing, then $C^*_r(\S, c)$ is Cartan in $C^*_r(\G, c)$. 
\end{tg-main}
If $(\G, \S, c)$ satisfies the hypotheses of  Theorem \ref{thm:main}, then by \cite[Theorem 5.2]{renault-cartan}, there exists {a unique} topologically principal groupoid $\H$ -- called the \emph{Weyl groupoid} -- and a twist $\Sigma$ over $\H$ such that the Cartan pair  $(C^*_r(\G, c), C^*_r(\S, c))$ is isomorphic to the pair $(C^*_r(\H, \Sigma), C_{0}(\H\z))$.  One is then led to ask about the relationship between the original groupoid $\G$ and the new data $(\H, \Sigma)$.  We show in Proposition~\ref{lem:c_trivial_H} that if $G$ is a countable discrete group and the hypotheses of Theorem \ref{thm:main} are satisfied by $(G, S \unlhd G, c)$, 
{mild additional hypotheses guarantee that}  the Weyl groupoid $\H $ is {easily constructed from $G$.  To be precise, $\H$ is} a transformation groupoid 
$(\under{S}{G})\ltimes  \widehat{S}$. 
{We emphasize that a given group $G$ may give rise, via different subgroups $S,$ to a variety of different groupoids $\H=(\under{S}{G})\ltimes  \widehat{S}$ 
--  the Weyl groupoid is an invariant of the Cartan pair $(A, B)$, not of the enveloping $C^*$-algebra $A$. Indeed, in Section \ref{sec:apps} we exhibit a group $G$ which admits subgroups $S_1, S_2$ satisfying the hypotheses of Proposition \ref{lem:c_trivial_H} with $S_1 \cong S_2,$ but for which the associated Weyl groupoids $\H_1, \H_2$ are not isomorphic.}

Finally, we comment on the relationship between our description of the Weyl groupoid and twist in Section \ref{sec:anna} and certain results in  the recent preprint \cite{IKRSW}.  Theorem 3.4 of \cite{IKRSW} describes an (untwisted) groupoid $C^*$-algebra $C^*(\Sigma)$ as the twisted $C^*$-algebra of a quotient groupoid $\widehat{\mathcal A} \rtimes \Sigma/\mathcal A$, using a closed normal subgroupoid $\mathcal A$ of $\Iso(\Sigma)$.   Theorem \ref{thm:general-untwist-case} and Proposition \ref{lem:c_trivial_H} of the current paper offer  similar descriptions of $C^*_r(G, c)$ for a discrete group $G$ and a normal subgroup $S$. These results do not overlap with those of \cite{IKRSW}, however, because of the assumption in \cite{IKRSW} that $\G/\mathcal A$ be  topologically principal, which is never true for a nontrivial group $G/S.$  However, the consonance between their results and ours is encouraging and suggests that a unified description of the Weyl groupoids  of a larger class of algebras may be within reach.

This paper is structured as follows.  In Section~\ref{sec:background}, we recall the definitions of Cartan subalgebras, groupoids, and the $C^*$-algebras associated to groupoids.   Section~\ref{main}  is devoted to the proof of Theorem \ref{thm:main}.   Preparatory to our analysis of the Weyl groupoid associated to the Cartan pairs identified in Theorem \ref{thm:main}, Section~\ref{sec:weyl} reviews the construction in \cite{renault-cartan} of the Weyl groupoid and provides several technical results leading to a useful characterization of the elements of this groupoid. {We anticipate that Proposition \ref{prop:weyl-picture} in particular may be of independent interest.} The next section, Section \ref{sec:anna}, contains our analysis of the Weyl groupoid {and twist} arising from Theorem \ref{thm:main}, under a few additional hypotheses. We apply these results to several examples (arising from countable discrete groups) in Section~\ref{sec:apps}.   The last section, Section \ref{ex:masa-counterexample}, presents an example which showcases the necessity of the ``immediately centralizing'' hypothesis in Theorem \ref{thm:main}.  

\section{Background}
\label{sec:background}
\begin{defn}\cite[Definition 5.1]{renault-cartan}
\label{def:cartan}
Let $A$ be a $C^*$-algebra.  A $C^*$-subalgebra $B$ of $A$ is a {\em Cartan subalgebra} if:
\begin{enumerate}[label=(\arabic*)]
\item\label{item:def-Cartan:masa} $B$ is a maximal abelian subalgebra (masa) of $A$.
\item\label{item:def-Cartan:CondExp} There exists a faithful conditional expectation $\Phi\colon  A \to B$.
\item\label{item:def-Cartan:normalizer} $B$ is regular; i.e., the normalizer of $B$, 
\[ N(B) := \{ n \in A: nbn^*, n^*bn \in B \ \forall \ b \in B\},\]
generates $A$ as a $C^*$-algebra.
\item\label{item:def-Cartan:Approx1} $B$ contains an approximate identity for $A$.
\end{enumerate}
\end{defn}

Recall (cf.~\cite[Theorem 1.5.10]{brown-ozawa}) that if $B \subseteq A$ is a $C^*$-subalgebra, a map $\Phi\colon A \to B$ is a {\em conditional expectation} if $\Phi$ satisfies $\Phi_{\vert B} = \mathrm{id}_{B}$ and is contractive and  linear.
We say $\Phi$ is {\em faithful} if, for any $a \in A$, $\Phi(a^*a) = 0$ implies $a = 0$. 

A \emph{groupoid} is a generalization of a group which has inverses but only a partially defined multiplication.  Precisely, a groupoid $\G$ is a small category in which every morphism $g$ has an inverse $g^{-1}$; {we then have source and range maps  $s(g) := g^{-1} g, \ r(g) := g g^{-1}$ satisfying $r(g) g = g = g s(g)$ for all $g \in \G$.}  
The space of objects (or units) in $\G$ is  $\G\z := \{r(g) \, | \, g \in \G\}=\{s(g) \, | \, g \in \G\}.$   We denote by $\G\2$ the set $\{(g,h) \, | \, s(g)=r(h)\}$ of composable elements.  Note that since $s(gh)=s(h)$ and $r(gh)=r(g)$ whenever $g, h \in \G\2$, we have
\[ (g, h), (h, k) \in \G\2  \Rightarrow (gh, k), (g, hk) \in \G\2 \text{ and } (gh) \cdot k = g \cdot (hk).\]

The arrows-only picture of category theory allows us to identify each object $u$ with the identity morphism $\id_u \colon  u \to u \in \G$.

For each $u \in \G\z,$ we write 
\[ \G^u =  \{\gamma \in 
\G\colon  r(\gamma) = u\} \quad \quad \G_u = \{ \gamma \in \G\colon  s(\gamma) = u\}, \quad \text{ and } \quad \G^u_u=\G^u \cap \G_u.\]

The {\em isotropy subgroupoid} $\Iso(\G)$ is $\{ g \in \G:  s(g) = r(g)\}$. When the only elements in $\G$ with $s(g) = r(g)$ are the units themselves, we say $\G$ is {\em principal}.

In general, a subset $\S$ of $\G$ will be called a {\em subgroupoid} if  whenever $(s,t) \in \G\2 \cap  \S \times \S $, we have $st, t\inv , s\inv  \in \S$.  We note that in the sequel our subgroupoids will necessarily contain $\G\z$ (see Remark~\ref{rmk:units-in-subgroupoid}), so this may be taken as part of the definition. A subgroupoid $\S$ is {\em normal} if $g \S g\inv  \subseteq \S$ for all $g \in \G$, where $g \S g\inv  = \{ gtg\inv :  t\in \S, s(t) = r(t) = s(g)\}$.

 A subset $\S$ of $\G$ is {\em abelian} if, whenever $(g, h) \in \G\2 \cap \S \times \S$, we also have $(h,g) \in \G\2 \cap \S \times \S$ and $gh = hg$.

In the following, we will only consider {\em topological} groupoids; that is, groupoids $\G$  equipped with a topology such that the multiplication, range, and source maps are continuous.  In this setting, we say $\G$ is {\em topologically principal} if the set $\{ u \in \G^{(0)}: s(g) = r(g) = u \Rightarrow g = u\}$ is dense in $\G^{(0)}$.  It is worth noting that when the groupoid $\G$ is actually a group, the unit space $\G^{(0)} = \{e\}$, and thus if the group is nontrivial, then $\G$ is not topologically principal.  Nor is the path groupoid \cite{kprr} 
of a graph  that does not satisfy Condition~(L) (every cycle has an entry).   Indeed, any cycle in a graph gives rise to nontrivial isotropy elements in this groupoid; if the cycle has no entry, these elements form singleton open sets.

A topological groupoid is said to be \emph{\'{e}tale} if $r$ -- and thus also $s$ -- is a local homeomorphism. 
It is straightforward to check that in an \'etale groupoid $\G$,  the unit space $\G\z$ is clopen.  A {\em bisection} in a groupoid $\G$ is a set $B \subseteq \G$ such that there exists an open set $U\supseteq B$ for which $r\colon U \to r(U), s\colon U \to s(U)$ are homeomorphisms. If $\G$ is \'etale, the open bisections generate the topology on $\G$ \cite[Lemma 2.4.9]{AidansNotes}. 

A ($\mathbb{T}$-valued) {\em 2-cocycle} on $\G$ is a function $c\colon  \G\2 \to \mathbb{T}$ such that 
\begin{gather*}
    c(g, s(g)) = c(r(g), g) = 1 \text{ for all $g \in \G$, and}
    \\
    c(g,hk) c(h,k) = c(gh, k) c(g,h) \text{ whenever $(g,h),(h, k) \in \G\2$.}
\end{gather*} 
\begin{lemma}\label{lem:c-g-ginv}
		For $\G$ a groupoid with a 2-cocycle $c$ and $g$ any element in $\G$, we have
		$
			c(g,g\inv)=c(g\inv,g).
		$
	\end{lemma}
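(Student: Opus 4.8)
The plan is to apply the 2-cocycle identity to the single well-chosen composable triple $(g, g\inv, g)$ and then collapse the resulting equation using the normalization axiom $c(g, s(g)) = c(r(g), g) = 1$. First I would verify admissibility: since $r(g\inv) = g\inv g = s(g)$ we have $(g, g\inv) \in \G\2$, and since $s(g\inv) = g g\inv = r(g)$ we have $(g\inv, g) \in \G\2$, so the cocycle relation $c(a, bc)\,c(b,c) = c(ab, c)\,c(a,b)$ is available with $a = g$, $b = g\inv$, $c = g$. Plugging in gives
\[
c\bigl(g,\, g\inv g\bigr)\, c\bigl(g\inv, g\bigr) \;=\; c\bigl(g g\inv,\, g\bigr)\, c\bigl(g, g\inv\bigr),
\]
that is, $c(g, s(g))\, c(g\inv, g) = c(r(g), g)\, c(g, g\inv)$. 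Since $c(g, s(g)) = 1$ and $c(r(g), g) = 1$ by normalization, this immediately yields $c(g\inv, g) = c(g, g\inv)$.

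I do not expect any real obstacle: the whole argument is a one-line substitution into the cocycle identity. The only points requiring a moment's care are (i) checking the two composability conditions $s(g) = r(g\inv)$ and $s(g\inv) = r(g)$ before invoking the cocycle relation, and (ii) making sure the normalization is being applied to the terms actually produced, namely $c(g, s(g))$ and $c(r(g), g)$ (the first with a unit on the right, the second with a unit on the left), rather than to terms of the form $c(s(g), g)$ or $c(g, r(g))$, which are not what the identity delivers here.
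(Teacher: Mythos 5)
Your proposal is correct and follows essentially the same route as the paper: both apply the cocycle identity to the composable triple $(g, g\inv, g)$ and then invoke the normalization $c(g,s(g)) = c(r(g),g) = 1$ to cancel the unit terms. The only cosmetic difference is that the paper writes the identity as a chain starting from $1 = c(g, g\inv g)$ with a complex conjugate, whereas you state the cocycle relation directly and cancel; the substance is identical.
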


	\begin{proof}
		The cocycle condition gives
		\[
			1
			= c(g, g\inv g)
			= c(gg\inv, g)\, c(g,g\inv) \,\overline{c(g\inv, g)}
			= c(g,g\inv) \,\overline{c(g\inv, g)}.\qedhere
		\]
	\end{proof}

Given a continuous $\mathbb{T}$-valued 2-cocycle on $\G$, the associated full and reduced $C^*$-algebras $C^*(\G, c), C^*_r(\G, c)$ were introduced in \cite{renault}. As we will mostly be concerned with $C^*_r(\G, c)$ in this paper, we focus our discussion on this setting.

Both $C^*(\G, c)$ and $C^*_r(\G, c)$ are completions of $C_c(\G)$, which we consider as a $*$-algebra via 
\[ f* h (\gamma) = \sum_{\eta \in \G^{s(\gamma)}} f(\gamma \eta) h(\eta\inv ) c(\gamma \eta, \eta\inv ) \quad \text{and} \quad f^*(\gamma) = \overline{f(\gamma\inv )c(\gamma, \gamma\inv )}.\]
The {\em reduced norm} of $f \in C_c(\G)$ is given by representing $C_c(\G)$ on the Hilbert spaces $\ell^2(\G_u)$ for $u \in \G\z$.  To be precise, for $\xi \in \ell^2(\G_u)$ and $f \in C_c(\G)$, define $f * \xi \in \ell^2(\G_u)$ by
\[ f * \xi (\gamma) = \sum_{\eta \in \G^u} f(\gamma \eta) \xi(\eta\inv ) c(\gamma \eta, \eta\inv ).\]
Then 
\begin{equation} 
\label{eq:reduced-norm}
\| f\|_r := \sup \{ \| f * \xi\|: u \in \G\z, \, \xi \in \ell^2(\G_u), \, \| \xi \| = 1\}.
\end{equation}
The {\em reduced twisted groupoid $C^*$-algebra} $C^*_r(\G, c)$ is the completion of $C_c(\G)$ in the norm $\| \cdot \|_r$.

A continuous 2-cocycle $c$ on $\G$ gives rise to a {\em twist} over $\G$ -- that is, a groupoid $\Sigma$ with $\Sigma\z = \G\z$ and an action of $\mathbb{T}$ on $\Sigma$ that fixes the unit space, such that $\Sigma/\mathbb{T} \cong \G.$  In particular, given $c$, we take $\Sigma = \G \times \mathbb{T}$ as topological spaces, and define the multiplication in $\Sigma$ by 
\[ (\gamma, t)(\eta, s) = (\gamma \eta, c(\gamma, \eta) ts)\]
whenever $(\gamma, \eta) \in \G\2.$  

The full and reduced $C^*$-algebras $C^*(\G, \Sigma), C^*_r(\G, \Sigma)$ of a twist $\Sigma$ over $\G$ are defined as completions of 
\[\{ f \in C_c(\Sigma): f(z \gamma) = \overline{z}f(\gamma) \text{ for all } z \in \mathbb{T}, \gamma \in \Sigma\};\]
see \cite[Example 2.9]{equiv-disint} for the details. If $\Sigma$ arises from a 2-cocycle $c$, then one can compute that $C^*(\G, \Sigma) \cong C^*(\G, c)$ and $C^*_r(\G, \Sigma) \cong C^*_r(\G, c).$

  As explained in (\cite[1.6]{c*-diagonals}, \cite[Proposition 4.7]{renault-cartan}), a separable Cartan pair $(A,B)$ gives rise to a topologically principal groupoid and twist as follows.   For 
    any $n\in N(B)$, there exists a unique partial homeomorphism $\alpha_n$ with domain
    \[
        \mathrm{dom}(n)
        :=
        \left\{
            x\in \widehat{B}
            \,\middle|\,
            n^*n(x) >0
        \right\},
    \]
    where $\widehat{B}$ is the Gelfand dual of $B$, and with codomain $\mathrm{dom}(n^*)$, that satisfies
    \begin{align}\label{eq:defining-eq-for-alpha} \forall \  b \in B \quad
        n^* b n 
        =
        (b\circ\alpha_{n}) \cdot n^* n.
    \end{align} If $n,m\in N(B)$, then one can show that
    \[
        \mathrm{dom}(nm)
        =
        \left\{
            x\in \mathrm{dom}(m) \,\middle|\,
            \alpha_m (x) \in \mathrm{dom}(n)
        \right\},
    \]
and on this domain, $\alpha_n \circ \alpha_m$ and $\alpha_{nm}$ agree. Furthermore, $\alpha_{n^*} = \alpha_n\inv.$
The family $\{\alpha_n\}_{n \in N(B)}$ gives rise to the {\em Weyl groupoid}  $\G_{(A,B)}$ of the Cartan pair: As a set, $\G_{(A,B)}$ is the quotient
\[
    \left\{
        (\alpha_{n}(x),n, x) \mid
        n\in N(B),
         x\in \mathrm{dom}(n)
    \right\} / \sim
\]
under the equivalence relation \begin{align}
\begin{split}\label{def:sim}
   & (\alpha_n(x), n, x) \sim
    (\alpha_m( y), m, y)
     \iff   \\
   &x=y  
    \text{ and there exists an open $U\subseteq \widehat{B}$ with }
   x\in U \text{ and }
    \alpha_n\vert_{U}
    = \alpha_m\vert_{U}
    .
\end{split}
\end{align}
We shall  denote the equivalence class of $(\alpha_n( x), n, x)$  by $[\alpha_n(x), n, x]$. It can be verified (cf.~\cite{renault-cartan}, \cite[Prop.~5.1.15]{AidansNotes}) that the  composition given by 
\[
    [\alpha_n(\alpha_m(x)), n, \alpha_m(x)]
    \cdot
    [\alpha_m(x), m, x]
    =
    [\alpha_{nm}(x), nm, x]
\]
is well-defined. 
We define a basic open set in $\G_{(A, B)}$ to be of the form $\{ [\alpha_n(x), n, x]: \alpha_n(x) \in V, x \in U\}$ for $U, V \subseteq \widehat B$ open and $n \in N(B)$ (cf.~\cite[Section 3]{renault-cartan}).

Similarly to the Weyl groupoid, the {\em Weyl twist} $ \Sigma_{(A,B)}$ is a quotient of
\[
    \left\{
        (\alpha_{n}(x), n ,x) \, \vert \,
        n \in N(B), 
        x \in \mathrm{dom}(n) 
    \right\}
    ,
\]
but under the following, more rigid, equivalence relation:
\begin{align*}
    &(\alpha_n(x), n, x) \approx
    (\alpha_m(y), m, y) \\ 
    \iff  &
    x = y
    \text{ and }
    \exists \ b,b'\in B \text{ such that }
    b(x),b'(x) >0 \text{ and }
    nb=mb'
    .
\end{align*}
We write $\llbracket \alpha_n(x), n, x \rrbracket$ for the class of the triple $(\alpha_n(x), n, x)$ with respect to this equivalence relation. Notice that equivalence with respect to $\approx$ implies equivalence with respect to $\sim$.
Thus, setting  (for $\lambda \in \mathbb{T}$)  
$$\lambda \cdot \llbracket \alpha_n(x), n, x \rrbracket = \llbracket \alpha_n(x), \lambda n, x \rrbracket$$
gives a well-defined action of $\mathbb{T}$ on $\Sigma_{(A,B)}$, and one can check (cf.~\cite[Proposition 4.14]{renault-cartan}) that $\Sigma_{(A, B)}/\mathbb{T} \cong \G_{(A,B)}$.

\section{Main result} \label{main}

In this section we prove Theorem \ref{thm:main}, which identifies Cartan subalgebras inside the $C^*$-algebras of twisted groupoid $C^*$-algebras that need not be topologically principal.

\begin{thm}
\label{thm:main}
Let $\G$ be a second countable, locally compact Hausdorff, \'etale groupoid, and let $c$ be a 2-cocycle on $\G$.  Suppose $\S $ is maximal among abelian subgroupoids of $\Iso(\G) $ on which $c$ is symmetric.  If $\S$ is clopen,   normal, and immediately centralizing, then $C^*_r(\S, c)$ is Cartan in $C^*_r(\G, c)$. 
\end{thm}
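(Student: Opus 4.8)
The plan is to verify the four conditions of Definition~\ref{def:cartan} in turn. Two preliminary observations make the first three essentially routine. First, since $\S$ is clopen, extension by zero embeds $C_c(\S)$ into $C_c(\G)$, and this embedding is isometric for the reduced norms because each $\ell^2(\G_u)$ is, as a $C_c(\S)$-module, a direct sum of copies of the regular representations $\ell^2(\S^v_v)$ (one per $\S$-orbit in $\G_u$); so $C^*_r(\S,c)$ is a $C^*$-subalgebra of $C^*_r(\G,c)$, and moreover compressing the left regular representation of $C^*_r(\G,c)$ by the projection onto $\bigoplus_u\ell^2(\S^u_u)$ sends $f\in C_c(\G)$ to the left regular representation of $f|_{\S}$. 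Second, since $\S$ is abelian and $c$ is symmetric on $\S$, the convolution formula makes $C_c(\S)$ a commutative $*$-algebra, so $C^*_r(\S,c)$ is abelian. Condition~\ref{item:def-Cartan:Approx1} is then immediate because $C_0(\G\z)\subseteq C^*_r(\S,c)$ already contains an approximate identity for $C^*_r(\G,c)$. For condition~\ref{item:def-Cartan:CondExp} I would take $\Phi$ to be $f\mapsto f|_{\S}$ on $C_c(\G)$; the compression above shows $\Phi$ is contractive, and it fixes $C_c(\S)$, so it extends to a conditional expectation $\Phi\colon C^*_r(\G,c)\to C^*_r(\S,c)$; composing $\Phi$ with the canonical faithful conditional expectation $C^*_r(\S,c)\to C_0(\G\z)$ returns the canonical faithful conditional expectation $C^*_r(\G,c)\to C_0(\G\z)$, whence $\Phi$ is faithful.

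For condition~\ref{item:def-Cartan:normalizer} I would exploit normality: for any open bisection $B\subseteq\G$ and any $f\in C_c(B)$, normality of $\S$ gives $\supp(f*b*f^*)\subseteq B\,\supp(b)\,B\inv\subseteq\S$ and $\supp(f^**b*f)\subseteq B\inv\supp(b)\,B\subseteq\S$ for every $b\in C_c(\S)$, and one checks these products lie in $C_c(\S)$; passing to norm limits shows $f\in N(C^*_r(\S,c))$. Since the functions supported on open bisections span a dense $*$-subalgebra of $C^*_r(\G,c)$, the normalizer generates, giving condition~\ref{item:def-Cartan:normalizer}.

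The substantive part is condition~\ref{item:def-Cartan:masa}. Let $a\in C^*_r(\G,c)$ commute with $C^*_r(\S,c)$; I would use Renault's injective, norm-decreasing linear map $j\colon C^*_r(\G,c)\to C_0(\G)$ that restricts to the identity on $C_c(\G)$, and set $\hat a:=j(a)$. Because $j$ is injective and $j(\Phi(a))=\hat a\cdot\mathbf 1_{\S}$, it suffices to prove $\supp\hat a\subseteq\S$, for then $a=\Phi(a)\in C^*_r(\S,c)$. Commuting with $C_0(\G\z)$ first forces $\hat a(\gamma)\bigl(\varphi(r(\gamma))-\varphi(s(\gamma))\bigr)=0$ for all $\varphi$, so $\supp\hat a\subseteq\Iso(\G)$. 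Next, for $\gamma\in\Iso(\G)$ with $s(\gamma)=r(\gamma)=u$, comparing the $\gamma$-coefficients of $a*b$ and $b*a$ as $b$ ranges over elements of $C_c(\S)$ supported near single points of $\S^u_u$ — using $\supp\hat a,\supp b\subseteq\Iso(\G)$ — should yield
\[
\hat a(\gamma)\,c(\gamma,\zeta)=\hat a(\zeta\inv\gamma\zeta)\,c\bigl(\zeta,\zeta\inv\gamma\zeta\bigr)\qquad\text{for all }\zeta\in\S^u_u .
\]
Since $\hat a|_{\G_u}\in\ell^2(\G_u)$, each positive level set of $\abs{\hat a}$ on $\G_u$ is finite; hence if $\hat a(\gamma)\neq 0$ then the $\S^u_u$-conjugacy class of $\gamma$ is finite, and substituting $\zeta$ from the centralizer of $\gamma$ back into the displayed identity shows that $c$ is symmetric between $\gamma$ and that centralizer. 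By continuity of $\hat a$ these properties hold on a whole open bisection through each point of $\supp\hat a$; this is exactly the situation addressed by the hypothesis that $\S$ is immediately centralizing (Definition~\ref{def:imm-cent}), and combined with the maximality of $\S$ it should force $\supp\hat a\subseteq\S$, completing the proof.

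I expect the main obstacle to be precisely this last step: promoting the ``finite $\S$-conjugacy class on a neighbourhood, with the cocycle symmetric toward the centralizer'' information produced by the commutation relation to the conclusion that $\supp\hat a$ lies inside $\S$. Everything upstream is standard twisted-groupoid bookkeeping, in which the roles of clopen-ness and normality are transparent; it is in this final step that maximality and the immediately-centralizing hypothesis have to be orchestrated together, and the latter is genuinely indispensable, as the example in Section~\ref{ex:masa-counterexample} shows.
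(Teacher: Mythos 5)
Your architecture matches the paper's. Conditions \ref{item:def-Cartan:CondExp}, \ref{item:def-Cartan:normalizer}, and \ref{item:def-Cartan:Approx1} are treated exactly as in Propositions \ref{prop:conditional-expectation} and \ref{prop:normalizer-generates} (compression of the regular representation for the expectation, normality plus a bisection decomposition for the normalizer), and your masa argument follows the paper's route through Lemmas \ref{lem:Bs-elts-have-supp-in-S} and \ref{lem:prop-of-h}: reduce to showing $\supp\hat a\subseteq\S$, derive the commutation identity, and conclude that $\lvert\hat a\rvert$ is constant on the $\S^u_u$-conjugacy class of any $\gamma\in\supp\hat a$, which must therefore be finite. (The paper gets the finiteness contradiction from $h\notin C_0(\G)$ rather than from square-summability of $\hat a\vert_{\G_u}$; both are fine.)

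The genuine gap is the final step, which you flag but do not close, and it is not a formality. Two things are missing. First, the role of the immediately centralizing hypothesis should be made precise: it is used only to show that if $\gamma$ fails to commute with some element of $\S^u_u$ (i.e.\ $\gamma\notin C_1(\S)$), then $C_k(\S)=C_1(\S)$ for all $k$ produces, for each $k$, some $t\in\S^u_u$ with $\{t^j\gamma t^{-j}: 1\le j\le k\}$ of cardinality $k$, so the conjugacy class is infinite --- contradicting the finiteness you established. This forces the remaining case: $\gamma$ commutes with \emph{all} of $\S^u_u$ and, by the commutation identity, $c(\gamma,s)=c(s,\gamma)$ for every $s\in\S^u_u$. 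Second, and more seriously, concluding $\gamma\in\S$ from maximality at this point requires showing that $c$ is symmetric on the whole subgroupoid generated by $\S$ and $\gamma$, i.e.\ that $c(s\gamma^k,t\gamma^n)=c(t\gamma^n,s\gamma^k)$ for all $s,t\in\S^u_u$ and $k,n\in\mathbb{Z}$. Symmetry of $c$ between $\gamma$ and $\S^u_u$ does not formally imply this: the paper isolates the claim as Lemma \ref{lem:eta-in-S}, warns explicitly that it does not follow from the definitions, and proves it by a chain of inductions occupying Appendix \ref{app:lemma-4-8} (via Lemmas \ref{lem:eta-or-etainv} and \ref{lem:eta-equivalences}). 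Your appeal to continuity and open bisections is not needed anywhere in this argument and does not substitute for that lemma.
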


We begin with a discussion of the ``immediately centralizing'' hypothesis needed for Theorem \ref{thm:main}, and then establish each of the four properties of Cartan subalgebras in a separate proposition.

\begin{defn}\label{assu:A}
\label{def:imm-cent}
Given a subgroupoid $\S \subseteq \G$, we say an element $\nu \in  \G^u_u \subseteq \Iso(\G)$ is  \emph{$k$-centralizing} for $\S$   (for $k \geq 1$) if for all $t \in \S^{u}_{u}$ there exists  $j \in \{1, \ldots, k\}$ such that $\nu t^j=t^j \nu$.      We will be concerned with subgroupoids $\S$ of $\Iso(\G)$ where all $k$-centralizing elements are in fact $1$-centralizing. 

That is, letting $C_k(\S)=\{\nu \in \G \, | \, \nu \text{ is $k$-centralizing for }\S\}$, we say that $\S$ is   \emph{immediately centralizing} if $\forall \  k \geq 1$, $C_k(\S) = C_1(\S)$.
 \end{defn}

    The property might seem very technical, so let us give two quick examples.  We thank Caleb Eckhardt for introducing us to the unique root property.

\begin{examples}\label{rmk:Examples-for-(A)} \hfill

   \begin{enumerate}
       \item When $\Iso(\G)$ is abelian, any subgroupoid of it is immediately centralizing.  This is the case for the description of the irrational rotation algebra from Example \ref{ex:IRR}, as well as for the path groupoid of any $k$-graph (see Section~2 of \cite{kp}).
         \item A subgroupoid $\S$ is immediately centralizing if each isotropy group $\S^u_u$ has the \emph{unique root property}: if $s,t\in \S^u_u$ are such that $s^{j} = t^{j}$ for some $j\in\mathbb{N}$, then $s=t$. (See \cite{Baumslag-roots} for a treatment of groups with the unique root property). In this situation, the equation $\nu t^j = t^j \nu$, or in other words $(\nu t \nu\inv)^j = t^j$, implies $\nu t \nu\inv = t$. We will study an example of such a group  in Section \ref{sec:apps}.
    \end{enumerate}
\end{examples}

For the rest of this section, $\G$ will always denote an {\'e}tale groupoid, $c$ a 2-cocyle on $\G$, and $\S$  a subgroupoid of $\Iso (\G)$.

\begin{rmk} \label{rmk:units-in-subgroupoid} Note that $\G\z$ is an abelian subgroupoid of $\Iso(\G)$ on which $c$ is symmetric.  Indeed, given any abelian subgroupoid $\mathcal A$ of $\Iso(\G)$ on which $c$ is symmetric, the set $\mathcal A \cup \G\z$ is another such.
It follows that any subgroupoid $\S$ satisfying the hypotheses of Theorem \ref{thm:main} will contain $\G\z$. 
\end{rmk}

\begin{rmk} 
\label{rmk:inclusion}
As discussed in the introduction to \cite[Section 3]{BNRSW}, when $\S$ is an open subgroupoid of $\Iso(\G)$, \cite[Proposition 1.9]{phillips-xprods} tells us that the map $\iota\colon  C_c(\S) \to C_c(\G)$ given by \[\iota(f)(g) =\begin{cases} 0, & g\not\in S \\
f(g), & g \in S 
\end{cases}\]
extends to an injective $*$-homomorphism 
from $C^*_r(\S)$  into $C^*_r(\G).$ A careful examination of Phillips' proof (using $\displaystyle u(\xi) = \gamma \mapsto \xi(\gamma g_0) \overline{c(\gamma, g_0)}$) reveals that $\iota$ also induces an injective $*$-homomorphism of $C^*_r(\S, c)$ into $C^*_r(\G,c).$ Because of this, we will make no notational distinction between $C^*_r(\S, c)$ and $\iota(C^*_r(\S, c)) \subseteq C^*_r(\G, c).$
\end{rmk}

	\begin{lemma}
	If $\S$ is abelian and $c$ is symmetric on $\S$, then $C_r^* (\S,c)$ is abelian.
	\end{lemma}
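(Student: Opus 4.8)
The plan is to reduce to a pointwise computation on the dense $*$-subalgebra $C_c(\S)\subseteq C^*_r(\S,c)$: since multiplication is norm-continuous, it suffices to check $f*h=h*f$ for all $f,h\in C_c(\S)$. Fix such $f,h$ and a point $\gamma\in\S$. Because $\S\subseteq\Iso(\G)$ we have $s(\gamma)=r(\gamma)=:u$, and the convolution sum for elements of $C_c(\S)$ runs over $\S^{s(\gamma)}$, which here is the isotropy group $\S^u_u$ of $\S$ at $u$ (equal to $\S^u$, since $\S\subseteq\Iso(\G)$). First I would rewrite the convolution formula from the excerpt using the substitution $g=\gamma\eta$ (a bijection of the group $\S^u_u$, under which $\eta\inv=g\inv\gamma$), obtaining
\[
(f*h)(\gamma)=\sum_{g\in\S^u_u} f(g)\,h(g\inv\gamma)\,c(g,g\inv\gamma)
\qquad\text{and}\qquad
(h*f)(\gamma)=\sum_{g\in\S^u_u} h(g)\,f(g\inv\gamma)\,c(g,g\inv\gamma).
\]
Then, in the first sum, I would make the further substitution replacing $g$ by $\gamma g\inv$ (again a bijection of $\S^u_u$); since $(\gamma g\inv)\inv\gamma=g$, this rewrites $(f*h)(\gamma)$ as $\sum_{g} f(\gamma g\inv)\,h(g)\,c(\gamma g\inv,g)$, so it remains to compare this with $(h*f)(\gamma)$ term by term.

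This comparison is exactly where the two hypotheses enter. Commutativity of $\S^u_u$ gives $\gamma g\inv=g\inv\gamma$, which identifies $f(\gamma g\inv)$ with $f(g\inv\gamma)$; and the residual scalar factors match because
\[
c(\gamma g\inv,g)=c(g\inv\gamma,g)=c(g,g\inv\gamma),
\]
the first equality again from $\gamma g\inv=g\inv\gamma$ and the second from symmetry of $c$ on $\S$, applied to the composable pair $(g\inv\gamma,g)$ of elements of $\S$. Hence the two sums agree termwise, so $f*h=h*f$ on $C_c(\S)$, and passing to the completion shows $C^*_r(\S,c)$ is abelian.

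I do not expect a genuine obstacle; the only point requiring care is the bookkeeping of the two index changes together with the routine verification that every pair to which symmetry of $c$ is applied is a composable pair lying in $\S$ (so the hypothesis is legitimately in force). As a sanity check and alternative viewpoint, one may note that fiberwise this is the elementary fact that the twisted group $C^*$-algebra of an abelian group with a symmetric $2$-cocycle is commutative, since then the canonical unitaries satisfy $u_g u_h=c(g,h)u_{gh}=c(h,g)u_{hg}=u_h u_g$; the convolution computation above is simply the version of that observation which is uniform over $\G\z$.
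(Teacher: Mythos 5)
Your proof is correct and follows essentially the same route as the paper's: reduce to the dense subalgebra $C_c(\S)$, reindex the convolution sum by a bijection of the isotropy group $\S^u_u$, and then match terms using commutativity of $\S^u_u$ together with symmetry of $c$ on $\S$. The only cosmetic difference is that you perform the change of variables in two steps where the paper does it in one.
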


	\begin{proof}
		It suffices to check that $C_c(\S,c)$ is abelian. We compute for $u\in \G\z = \S\z$ and $s\in\S^{u}_{u}$,
		\begin{alignat*}{3}
			(f\ast g) (s) 
			&=
			\sum_{t\in \S^{u}_{u}}
				f(st) g(t\inv) \, c(st, t\inv)
			\\
		&	=
			\sum_{r\in \S^{u}_{u}}
				f(r\inv) g(sr) \, c(r\inv, sr)
			&\quad
			\text{($rs = sr$)}
			\\
		&	=
			\sum_{r\in \S^{u}_{u}}
				 g(sr) f(r\inv) \, c(sr,r\inv)
			&\quad\text{($c$ symmetric on $\S$)}
			\\
			&= (g\ast f) (s).
		\end{alignat*}
		\par \vspace{-1.7\baselineskip} 
    \qedhere
	\end{proof}

One might be tempted to think that the following result follows immediately from the definitions.  However, this is emphatically not the case.
	
	\begin{lemma}\label{lem:eta-in-S}
	    Suppose  $\S$ is maximal among abelian subgroupoids of $\Iso (\G)$ on which $c$ is symmetric. Let $u$ be a unit. If $\eta\in\G^{u}_{u}$ satisfies $\eta s = s \eta$ and $c(s, \eta) = c(\eta, s)$ for all $s\in \S^{u}_{u}$, then $\eta\in\S$.
	  \end{lemma}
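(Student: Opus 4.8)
The plan is to enlarge $\S$ to a subgroupoid that still satisfies the two defining properties of $\S$ (abelian, with $c$ symmetric) but that contains $\eta$; maximality of $\S$ then forces the enlargement to equal $\S$, whence $\eta \in \S$. Concretely, I would take $\mathcal A$ to be the subgroupoid of $\Iso(\G)$ generated by $\S \cup \{\eta\}$. Since $\eta \in \G^u_u$ can be composed only with those elements of $\S$ that lie in $\S^u_u$, and $\eta$ commutes with all of $\S^u_u$ by hypothesis, one sees that $\mathcal A = \S \cup \mathcal A^u_u$, where $\mathcal A^u_u := \{\eta^n s : n \in \mathbb Z,\ s \in \S^u_u\}$, and that $\mathcal A^u_u$ is an \emph{abelian} subgroup of $\G^u_u$: it is closed under products since $(\eta^n s)(\eta^m t) = \eta^{n+m}(st)$, it is closed under inverses since $(\eta^n s)\inv = \eta^{-n}s\inv$, it contains $u$, and its commutativity follows from that of $\S^u_u$ together with the centrality of $\eta$ in $\mathcal A^u_u$. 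As composable elements of $\Iso(\G)$ share a unit, and $\mathcal A$ agrees at every unit with one of the abelian groups $\S^v_v$ ($v\neq u$) or $\mathcal A^u_u$, this makes $\mathcal A$ an abelian subgroupoid of $\Iso(\G)$ containing $\S$.

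The heart of the proof is to verify that $c$ is symmetric on $\mathcal A$; since $c$ is symmetric on $\S$, this reduces to showing $c(a,b) = c(b,a)$ for all $a, b \in \mathcal A^u_u$. For this I would pass to the antisymmetrization $\beta(a,b) := c(a,b)\,\overline{c(b,a)}$. On the abelian group $\mathcal A^u_u$ this is precisely the commutator pairing of the central extension $\{u\}\times\mathbb T \hookrightarrow \mathcal A^u_u\times\mathbb T \twoheadrightarrow \mathcal A^u_u$ coming from the twist $\Sigma$ of $c$: a short computation with the multiplication on $\Sigma$ gives $(a,1)(b,1) = (u,\beta(a,b))(b,1)(a,1)$ whenever $ab = ba$. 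Standard commutator identities --- with the $2$-cocycle relation and Lemma~\ref{lem:c-g-ginv} ensuring that $\beta$ does not depend on the choice of lifts --- then show $\beta$ is bi-multiplicative on $\mathcal A^u_u$. Now the hypotheses say exactly that $\beta$ is trivial on the generating set $\{\eta\}\cup\S^u_u$ of $\mathcal A^u_u$: $\beta(\eta,s) = 1$ for $s\in\S^u_u$ because $c(\eta,s) = c(s,\eta)$, $\beta(s,t) = 1$ for $s,t\in\S^u_u$ because $c$ is symmetric on $\S$, and $\beta(\eta,\eta) = 1$ trivially. By bi-multiplicativity, $\beta\equiv 1$ on $\mathcal A^u_u$, i.e.\ $c$ is symmetric there. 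Having established all four required properties of the enlargement, maximality of $\S$ yields $\mathcal A = \S$, and since $\eta \in \mathcal A^u_u \subseteq \mathcal A$, we conclude $\eta \in \S$.

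The step I expect to be the main obstacle --- and the reason the lemma is not a tautology --- is showing that $c$ stays symmetric after the enlargement, i.e.\ the bi-multiplicativity of $\beta$ on $\mathcal A^u_u$. The tempting shortcut ``$\S\cup\{\eta\}$ is already a subgroupoid on which $c$ is symmetric'' fails because one must first close up under products and powers of $\eta$, and checking that $c$ remains symmetric on all of those new elements is exactly where the $2$-cocycle identity has to be invoked. As an alternative to the commutator-pairing packaging, one could instead prove the identities $c(\eta^n s,\eta^m t) = c(\eta^m t,\eta^n s)$ directly by induction on $|n| + |m|$ using the cocycle relation and Lemma~\ref{lem:c-g-ginv}, but the bicharacter viewpoint keeps the bookkeeping to a minimum.
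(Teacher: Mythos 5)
Your proposal is correct, but it takes a genuinely different route from the paper's. The paper proves symmetry of $c$ on the enlarged subgroupoid $\T$ generated by $\S$ and $\eta$ entirely by hand: it first establishes two auxiliary lemmas (Lemma \ref{lem:eta-or-etainv}, relating symmetry of $c$ at $(\xi,\eta)$ to symmetry at $(\xi\eta,\eta\inv)$ and $(\xi,\eta\inv)$, and Lemma \ref{lem:eta-equivalences}, propagating symmetry from $(s,\eta)$ to $(s,\eta t)$), and then runs four separate inductions to verify $c(s\eta^{k},t\eta^{n})=c(t\eta^{n},s\eta^{k})$ for all sign combinations of $k$ and $n$ --- this is the entire content of Appendix \ref{app:lemma-4-8}. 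You instead observe that the antisymmetrization $\beta(a,b)=c(a,b)\,\overline{c(b,a)}$ of the restriction of $c$ to the abelian group $\mathcal A^{u}_{u}$ is the commutator pairing of the associated central extension of $\mathcal A^u_u$ by $\mathbb{T}$, hence an alternating bicharacter, so that triviality of $\beta$ on the generating set $\S^{u}_{u}\cup\{\eta\}$ (which is exactly the hypothesis together with the symmetry of $c$ on $\S$) forces triviality everywhere. This is cleaner and more conceptual, and it isolates the real content of the lemma in a single standard fact; the cost is that you should still write out the bi-multiplicativity, namely the identity $[\tilde x\tilde y,\tilde z]=[\tilde x,\tilde z]^{\tilde y}\,[\tilde y,\tilde z]=[\tilde x,\tilde z]\,[\tilde y,\tilde z]$ for commutators landing in the central circle, since that one identity is what replaces all of the paper's inductions. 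Two small remarks: the independence of $\beta$ from the choice of lifts follows already from centrality of $\mathbb{T}$ in the extension and does not actually require Lemma \ref{lem:c-g-ginv}; and your reduction to $\mathcal A^{u}_{u}$ is justified because every composable pair in $\mathcal A\subseteq\Iso(\G)$ lies in a single isotropy group, which for units $v\neq u$ is just $\S^{v}_{v}$, where nothing needs to be checked.
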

	
	The key difficulty is that the assumption that $c(s, \eta) = c(\eta, s)$ for all $s \in \S$ does not immediately imply that $c(s, \eta^k) = c(\eta^k, s)$ for all $k\in\mathbb{Z}$, and so there is non-trivial work required to prove that $c$ is also symmetric on the subgroupoid generated by $\S$ and $\eta$ (which, by maximality, then implies $\eta \in \S$). While the proof of Lemma \ref{lem:eta-in-S} is fairly long and intricate, it consists primarily of several careful applications of induction and is not very enlightening, so we relegate it to Appendix \ref{app:lemma-4-8}.
		
	\begin{lemma}\label{lem:prop-of-h}
		Suppose that  $\G$ is a second countable, locally compact  Hausdorff,  \'{e}tale  groupoid with 2-cocycle $c$. 
		  If $h\in C_r^* (\G, c)$ commutes with every element of $C_c (\S, c)$, then $h$ is supported in $\Iso (\G)$ and satisfies
		\begin{equation}\label{eq:prop-of-h}
	    	h(\nu) c(s, \nu) 
			=
			h(s \nu s\inv) c(s \nu s^{-1}, s) 
		\end{equation}
		for all $\nu\in \Iso (\G)$ and all $s\in \S$ with the same range \textup(and source\textup) as $\nu$.
	\end{lemma}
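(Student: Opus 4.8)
The plan is to analyze what it means for an element $h \in C_r^*(\G, c)$ to commute with $C_c(\S, c)$ by testing against the characteristic-like functions of open bisections contained in $\S$, and then to extract the two claimed conclusions: support in $\Iso(\G)$ and the covariance relation \eqref{eq:prop-of-h}. Since $\G$ is second countable, locally compact, Hausdorff and \'etale, and $\S$ is a clopen (hence \'etale) subgroupoid of $\Iso(\G)$, elements of $C_c(\S, c)$ include, for each open bisection $B \subseteq \S$ and each $\varphi \in C_c(B)$, the function supported on $B$ equal to $\varphi$. Because the topology is generated by open bisections and $\S$ contains $\G\z$, it suffices to work locally: fix $u \in \G\z$ and note that evaluating the convolution product at a point $\gamma$ only involves the values of $h$ and of the test function along the fibers $\G^{r(\gamma)}$ and $\G_{s(\gamma)}$.

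First I would compute, for a test function $g \in C_c(\S, c)$ supported near a single element $s \in \S^u_u$ (formally one approximates the point mass at $s$ by functions supported on shrinking bisections, using that $\S$ is \'etale so $\{s\}$ is a decreasing intersection of open bisections), both $g * h$ and $h * g$ evaluated at an arbitrary $\gamma \in \G$. Using the convolution formula $f * h(\gamma) = \sum_{\eta \in \G^{s(\gamma)}} f(\gamma\eta) h(\eta\inv) c(\gamma\eta, \eta\inv)$, the support condition $s(\gamma\eta) = r(\eta\inv) = s(\eta)$ forces, once $g$ is concentrated at $s$, that $\gamma \eta$ lies near $s$, i.e.\ $\eta \approx \gamma\inv s$; similarly for $h * g$ the relevant $\eta$ satisfies $\gamma\eta \approx$ (element of $\G_{r(s)}$ near something) forcing $\eta$ near $s\inv \gamma$. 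Carefully matching which $\gamma$ contribute on each side — the left side $g * h$ only "sees" $\gamma$ with $r(\gamma) = r(s) = u$ and $h$ evaluated at $s\inv \gamma$, while the right side $h*g$ only sees $\gamma$ with $s(\gamma) = s(s) = u$ and $h$ evaluated at $\gamma s\inv$ — and then equating $g*h(\gamma) = h*g(\gamma)$ for all $\gamma$, one gets a relation of the form $h(s\inv\gamma)\,(\text{cocycle factor}) = h(\gamma s\inv)\,(\text{cocycle factor})$ valid for all $\gamma$ with $r(\gamma)=u$ (and vanishing of $h(\gamma)$ when the two sides cannot both be nonzero). Taking $s = u$ a unit already constrains things; the genuine content comes from general $s \in \S$.

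To deduce $\supp h \subseteq \Iso(\G)$: if $\gamma_0 \notin \Iso(\G)$, then $r(\gamma_0) \neq s(\gamma_0)$, and I would choose the test function $g$ supported near the unit $r(\gamma_0)$ (which lies in $\S$ since $\G\z \subseteq \S$, using Remark~\ref{rmk:units-in-subgroupoid}); then $g * h$ at $\gamma_0$ picks up $h(\gamma_0)$ times a unimodular cocycle factor, while $h * g$ at $\gamma_0$ involves $g$ evaluated near $s(\gamma_0) \neq r(\gamma_0)$, hence vanishes. Commutation then forces $h(\gamma_0) = 0$. (One must be slightly careful that $h$ is a priori only an element of the completion, not a function; the standard fix is that the faithful conditional expectation / the regular representation lets us regard $h$ as a function on $\G$ via $\gamma \mapsto \langle h\delta_{s(\gamma)}, \delta_\gamma\rangle$ up to the appropriate cocycle twist, and all the identities above are first derived on the dense subalgebra $C_c(\G)$ and then extended by continuity of the maps $h \mapsto (g*h - h*g)$ and $h \mapsto h(\gamma)$ in the reduced norm.) Once support in $\Iso(\G)$ is known, substituting $\nu \in \Iso(\G)$ and $s \in \S$ with $r(s) = s(s) = r(\nu) = s(\nu) = u$ into the commutation identity and simplifying the cocycle factors — using the cocycle equation and Lemma~\ref{lem:c-g-ginv} to rewrite terms like $c(s\nu, s\inv)$ in terms of $c(s,\nu)$, $c(s\nu s\inv, s)$ — yields exactly \eqref{eq:prop-of-h}; note $\eta\inv = s$ gives $s\nu s\inv$ as the argument of $h$ on one side and $\nu$ on the other, which is the shape of the claimed relation.

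The main obstacle I anticipate is bookkeeping the cocycle factors correctly and rigorously passing from "$h$ commutes with all of $C_c(\S,c)$" to a \emph{pointwise} identity on $\G$: one cannot literally plug in point masses (they are not in $C_c(\G)$), so the argument must proceed by taking nets $g_i \in C_c(\S,c)$ of normalized bumps shrinking to $s$, computing $g_i * h$ and $h * g_i$ in $C_r^*(\G,c)$, and using that the coefficient functional $a \mapsto a(\gamma)$ (suitably interpreted via the $\ell^2(\G_u)$ representation) is norm-continuous, so that the limit of $(g_i * h)(\gamma) - (h * g_i)(\gamma) = 0$ survives. A secondary subtlety is ensuring the test functions actually lie in $C_c(\S, c)$ and not merely $C_c(\G, c)$ — this is where clopenness of $\S$ (so that open bisections of $\S$ are open in $\G$) and $\G\z \subseteq \S$ are used. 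Everything else is a careful but routine manipulation of the convolution formula and the cocycle identity.
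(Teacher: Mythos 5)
Your proposal is correct and follows essentially the same route as the paper: one tests commutation against a function $f\in C_c(\S,c)$ supported on a small open bisection of $\S$ containing $s$, evaluates $f\ast h$ and $h\ast f$ pointwise at $s\nu$ (legitimately, via the injection $j\colon C^*_r(\G,c)\to C_0(\G)$ of \cite[Proposition II.4.2]{renault}), and equates to get \eqref{eq:prop-of-h}. The only cosmetic differences are that the paper obtains $\supp(h)\subseteq\Iso(\G)$ by citing \cite[Theorem 4.2]{renault-cartan} rather than rederiving it, and that your shrinking-net limiting argument is unnecessary: since a bisection $B\ni s$ meets the fiber $\G_{s(\nu)}$ only in $s$, a single test function with $f(s)=1$ already isolates the two terms $h(\nu)c(s,\nu)$ and $h(s\nu s\inv)c(s\nu s\inv,s)$ in one computation.
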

		\begin{proof}
		From Theorem 4.2 in \cite{renault-cartan} and the discussion above it, we get $\supp (h)\subseteq \Iso (\G)$, where $h$ is thought of as an element of $C_0 (\G)$. 
		
		Fix any $\nu\in \Iso (\G)$ and any $ s \in \S$ with the same source as $\nu$ (and hence also the same range). We can find a bisection $B$ such that $\G_{s(\nu)} \cap B = \{ s \}$ and a function $f\in C_c (\S, c)$ whose support is contained in $B$ with $f( s ) = 1.$  Consequently, $s$ is the only element of $\G_{s(\nu)}$ with $f(s) \not= 0$.   Using 
			the fact that $\supp(h) \subseteq \Iso(\G)$, we have 
		\begin{align*}
		    (h \ast f)(s \nu) & = \sum_{\eta \in s^{-1}(s(\nu))}  h(s\nu\eta^{-1})f(\eta)c(s\nu\eta^{-1}, \eta) \\
		    & = h(s\nu s^{-1}) c(s\nu s^{-1}, s)
		\end{align*}
		and
		\begin{align*}
		    (f\ast h)(s \nu) & = \sum_{\zeta \in s^{-1}(s(\nu))}  f(s \nu \zeta\inv) h(\zeta) c(s \nu \zeta\inv, \zeta)\\
			& = \sum_{\zeta \in \G_{s(\nu)}^{s(\nu)}} f(s \nu \zeta^{-1}) h(\zeta) c(s \nu \zeta^{-1}, \zeta)\\
			& = f(s) h( \nu) c(s, \nu )\\
			& = h(\nu) c(s, \nu)
		\end{align*}
		Since $h$ and $f$ commute, this completes the proof.
	\end{proof}

Recall from \cite[Proposition II.4.2]{renault} that for any \'etale groupoid $\G$, we have an injective, norm-decreasing inclusion $j: C^*_r(\G,c) \to C_0(\G)$, where the latter space is equipped with the supremum norm.

    \begin{lemma}\label{lem:Bs-elts-have-supp-in-S}
        Suppose the subgroupoid $\S$ is clopen. Then an element of $C_{r}^{*}(\G,c)$ is in the subalgebra $C_{r}^{*}(\S,c)$ if and only if its image in $C_{0}(\G)$ is supported in $\S$.
    \end{lemma}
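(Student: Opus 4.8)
The plan is to prove the two implications separately; one is routine, while the other rests on constructing a contractive ``restriction'' map complementary to the inclusion $\iota\colon C^*_r(\S,c)\hookrightarrow C^*_r(\G,c)$ of Remark~\ref{rmk:inclusion}.

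\textbf{Necessity.} Suppose $a\in C^*_r(\S,c)$, and write $a=\lim_n\iota(f_n)$ with $f_n\in C_c(\S)$; each $\iota(f_n)$, regarded as a function in $C_c(\G)$, vanishes off $\S$. Since $j\colon C^*_r(\G,c)\to C_0(\G)$ is norm-decreasing and is the identity on $C_c(\G)$, we get $j(a)=\lim_n\iota(f_n)$ uniformly on $\G$, so $j(a)$ vanishes off $\S$; as $\S$ is closed, this says precisely that $j(a)$ is supported in $\S$.

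\textbf{Sufficiency.} The crux is the claim that restriction $f\mapsto f|_\S$ sends $C_c(\G)$ into $C_c(\S)$ (here $\S$ clopen is used) and is norm-decreasing for the reduced norms. To prove this I would fix $u\in\G\z=\S\z$, let $e_u$ be the orthogonal projection of $\ell^2(\G_u)$ onto the closed subspace $\ell^2(\S_u)$, and verify by a direct computation with the convolution formula underlying \eqref{eq:reduced-norm} that the compression $e_u\,\pi_u(f)\,e_u$ of the regular representation $\pi_u$, viewed as an operator on $\ell^2(\S_u)$, is precisely the operator $\pi^{\S}_u(f|_\S)$ by which $f|_\S$ acts in the regular representation of $C^*_r(\S,c)$ at $u$; here one uses $\S\subseteq\Iso(\G)$ — so that $\S_u=\S^u=\S^u_u$ and the relevant sums range over the same index set — and that the cocycle on $\S\2$ defining $C^*_r(\S,c)$ is the restriction of $c$. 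Taking the supremum over $u$ then gives $\|f|_\S\|_r=\sup_u\|\pi^{\S}_u(f|_\S)\|=\sup_u\|e_u\pi_u(f)e_u\|\le\sup_u\|\pi_u(f)\|=\|f\|_r$. Consequently restriction extends to a contraction $R\colon C^*_r(\G,c)\to C^*_r(\S,c)$ with $R\circ\iota=\mathrm{id}$, and by comparing on $C_c$ and passing to limits (using continuity of $\iota$, $j$, and $R$) one gets that the function $j\bigl(\iota(R(a))\bigr)\in C_0(\G)$ agrees with $j(a)$ on $\S$ and vanishes off $\S$, for every $a$.

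Granting this, the converse follows: if $a\in C^*_r(\G,c)$ has $j(a)$ supported in $\S$, put $b:=\iota(R(a))\in C^*_r(\S,c)$. On $\S$ we have $j(b)=j(a)$ by the previous sentence, and off $\S$ both $j(b)$ (by Necessity applied to $b$) and $j(a)$ (by hypothesis) vanish; hence $j(b)=j(a)$, and injectivity of $j$ forces $a=b\in C^*_r(\S,c)$. The main obstacle is exactly the norm-decreasing property of restriction — that is, the identification of the compression $e_u\pi_u(f)e_u$ with the $\S$-regular representation of $f|_\S$; everything after that is routine bookkeeping with the injective maps $\iota$ and $j$ and the density of $C_c$. (One could equally well avoid naming $R$ and run Sufficiency directly: for $f_n\to a$ in $C^*_r(\G,c)$ the sequence $(f_n|_\S)_n$ is Cauchy in $C^*_r(\S,c)$ by the same estimate, and applying $\iota$ and then $j$ to its limit recovers $j(a)$. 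We also note in passing that $\iota\circ R$ is in fact a conditional expectation onto $C^*_r(\S,c)$, though we do not need this here.)
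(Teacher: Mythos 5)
Your proof is correct and follows essentially the same route as the paper's: the necessity direction is identical, and the sufficiency direction is the paper's argument (cut down $f_n\to a$ by $\chi_\S$, use contractivity of restriction to get a Cauchy sequence in $C^*_r(\S,c)$, and identify limits via injectivity of $j$), merely packaged as a map $R$ with $R\circ\iota=\id$. The one difference is that the paper simply asserts $\norm{\chi_\S\cdot f}_r\le\norm{f}_r$ inside this lemma, whereas you prove it on the spot by identifying the compression $e_u\pi_u(f)e_u$ with the regular representation of $f|_\S$ --- exactly the computation the paper carries out later in the proof of Proposition \ref{prop:conditional-expectation}.
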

    \begin{proof}
         
        First, if $h\in C_{r}^{*}(\S, c)$, then there exist $h_{n}$ in $C_c (\S,c)$ which converge to $h$ and so $j (h_{n}) = h_{n}$ converge to $j (h)$.  The fact that each $h_n$ is supported on $\S$ therefore implies that if $y \not\in \S$,
        \[ | j(h)(y)| = | j(h)(y) - h_{n} (y)|  \leq \| j(h) - h_n\|_{\infty} \leq \| h-h_n\|_r\]
        can be forced less than $\epsilon$ for any $\epsilon > 0$.  Since $\S$ is closed, we conclude that $\text{supp}(j(h)) \subseteq S.$
        
        Conversely, assume that the image of $h\in C_{r}^{*}(\G,c)$ under $j$ is supported in $\S$. Let $h_{n} \in C_c (\G,c)$ converge to $h$ in the reduced norm, and define $h'_{n} := \chi_{\S}\cdot h_{n}$. Since $\S$ is clopen, $h'_{n}\in C_{c}(\S,c)$. As $\norm{\chi_{\S}\cdot f}_{r}\leq \norm{f}_{r}$ for any $f\in C_{c}(\G,c)$, it follows from the fact that $h_{n}{\to} h$ that  $(h'_{n})_{n}$ is Cauchy. Since $j$ is continuous, we have that $j(h_{n})$ converges to $j(h)$ in $C_{0}(\G)$. As $j(h)$ is supported in $\S$, this implies that $\chi_{\S}\cdot j(h_{n}) = \chi_{\S}\cdot h_{n} = j(h_{n}')$ also converges to $j(h)$. In particular, since $j$ is injective, we must have that the $C^*$-limit of $(h'_{n})_{n}$ coincides with the $C^*$-limit of $(h_{n})_{n}$, i.e.~$h$ is the limit of elements in $C_c (\S,c)$ and hence an element of $C_r^* (\S,c).$
    \end{proof}

    \begin{prop}\label{prop:masa}
		With all the assumptions from Theorem~\ref{thm:main}, $C^*_r (\S,c)$ is maximal abelian in $C^*_r (\G,c)$.
	\end{prop}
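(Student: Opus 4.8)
The plan is the following. Since $C^*_r(\S,c)$ is abelian (shown above), it suffices to prove that every $h\in C^*_r(\G,c)$ commuting with all of $C^*_r(\S,c)$ already lies in $C^*_r(\S,c)$: indeed, any abelian $C$ with $C^*_r(\S,c)\subseteq C\subseteq C^*_r(\G,c)$ then satisfies $C\subseteq C^*_r(\S,c)$ (each element of $C$ commutes with $C^*_r(\S,c)$), hence $C=C^*_r(\S,c)$. So I would fix such an $h$. It commutes in particular with $C_c(\S,c)$, so Lemma~\ref{lem:prop-of-h} tells us $j(h)$ is supported in $\Iso(\G)$ and that
\[
  j(h)(\nu)\,c(s,\nu)=j(h)(s\nu s\inv)\,c(s\nu s\inv,s)
\]
for all $\nu\in\Iso(\G)$ and all $s\in\S$ with the same source and range as $\nu$. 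By Lemma~\ref{lem:Bs-elts-have-supp-in-S} and the fact that $\S$ is clopen, it is enough to show that $j(h)$ vanishes off $\S$; so I would fix $\nu$ with $j(h)(\nu)\neq 0$, set $u:=s(\nu)=r(\nu)$, and aim to deduce $\nu\in\S$.

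Taking absolute values in the identity above (recall that $c$ is $\mathbb{T}$-valued) gives $\abs{j(h)(t\nu t\inv)}=\abs{j(h)(\nu)}$ for all $t\in\S^u_u$, so the conjugation orbit $O_\nu:=\{t\nu t\inv:t\in\S^u_u\}$ is contained in the set $\{\abs{j(h)}\ge\abs{j(h)(\nu)}\}$, which is compact because $j(h)\in C_0(\G)$ and $\abs{j(h)(\nu)}>0$. Since $\G$ is \'etale, $\G^u_u$ is discrete and closed in $\G$, and $O_\nu\subseteq\G^u_u$; hence $O_\nu$ is finite, say $\abs{O_\nu}=m$. By orbit--stabilizer, the subgroup $\{t\in\S^u_u:t\nu t\inv=\nu\}$ of $\S^u_u$ has index $m$, so for every $t\in\S^u_u$ two of the $m+1$ elements $u,t,t^2,\dots,t^m$ lie in a common coset of it, producing $j\in\{1,\dots,m\}$ with $\nu t^j=t^j\nu$. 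Thus $\nu\in C_m(\S)$, and since $\S$ is immediately centralizing, $\nu\in C_1(\S)$; that is, $\nu t=t\nu$ for all $t\in\S^u_u$.

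It then remains to upgrade this to the full hypothesis of Lemma~\ref{lem:eta-in-S}. Substituting $s\nu s\inv=\nu$ for $s\in\S^u_u$ into the displayed identity and cancelling the nonzero scalar $j(h)(\nu)$ yields $c(s,\nu)=c(\nu,s)$ for all $s\in\S^u_u$. So $\nu$ centralizes $\S^u_u$ and $c$ is symmetric on the pair $(\S^u_u,\nu)$; by Lemma~\ref{lem:eta-in-S}---where the maximality of $\S$ enters---we conclude $\nu\in\S$. As $\nu$ was an arbitrary point of the open set $\{j(h)\neq 0\}$ and $\S$ is closed, this gives $\supp(j(h))\subseteq\S$, and Lemma~\ref{lem:Bs-elts-have-supp-in-S} then puts $h\in C^*_r(\S,c)$, as desired.

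I expect the crux to be the second paragraph: the passage from ``$\nu$ commutes with enough powers of each element of $\S^u_u$'' to an honest $m$-centralizing statement hinges on the finiteness of the orbit $O_\nu$, and that finiteness must be squeezed out of the $C_0$-decay of $j(h)$ together with the discreteness of the isotropy group $\G^u_u$ in an \'etale groupoid. A second point worth care---though it is short---is the observation that commutation of $\nu$ with $\S^u_u$ forces, via Lemma~\ref{lem:prop-of-h}, the \emph{cocycle} to be symmetric on $(\S^u_u,\nu)$; this is not visible from the definitions and is exactly what makes Lemma~\ref{lem:eta-in-S} applicable. The reduction of ``maximal abelian'' to ``containing its own relative commutant'' is routine but should be noted explicitly.
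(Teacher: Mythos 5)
Your proof is correct and follows essentially the same route as the paper's: Lemma~\ref{lem:prop-of-h} to localize the support in $\Iso(\G)$ and extract the conjugation identity, $C_0$-decay plus discreteness of $\G^u_u$ to control the orbit $\{t\nu t\inv : t\in\S^u_u\}$, the immediately centralizing hypothesis to force $\nu\in C_1(\S)$, and Lemma~\ref{lem:eta-in-S} to conclude $\nu\in\S$. The only difference is organizational: the paper argues by contradiction (a non-centralizing $\nu$ yields an infinite orbit, contradicting $j(h)\in C_0(\G)$), whereas you run the argument forward, deducing finiteness of the orbit first and then $\nu\in C_m(\S)$ via orbit--stabilizer and pigeonhole; the combinatorial content is identical.
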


    \begin{rmk}
        If $\S$ is not immediately centralizing, then this statement is not {necessarily} true. See the example in Section \ref{ex:masa-counterexample}.
    \end{rmk}

	\begin{proof}
     By Lemma~\ref{lem:Bs-elts-have-supp-in-S}, $h\in C_r^* (\G,c)$ is in $ C_r^* (\S,c)$ exactly when its image in $C_{0}(\G)$ has support in $\S$. Consequently, if we assume that $h\in C_r^* (\G,c)$ commutes with every element of $C_r^* (\S,c)$, then we need to show that $\mathrm{supp}(h)\subseteq\S$ in order to conclude that $C_{r}^{*}(\S,c)$ is maximal abelian. As $\S$ is closed, it suffices to check that $h(\nu)\neq 0$ implies $\nu\in \S$. Note that we already know by Lemma \ref{lem:prop-of-h} that $\nu \in \G^u_u$ for some unit $u$.
		
		First, suppose that $\nu$ commutes with every $s\in \S^u_u$. Then Equation \eqref{eq:prop-of-h} implies
		    $c(\nu, s)
			=
			c(s, \nu)$
		for all $s\in \S^u_u$. It follows from Lemma \ref{lem:eta-in-S} that $\nu\in \S$, as desired.
		
		Next, suppose  there exists at least one $s \in \S^u_u$ such that $\nu s \not= s \nu,$ i.e.~$\nu \notin C_1(\S)$ (recall Definition~\ref{def:imm-cent}).  Since $\S$ is immediately centralizing, this implies that for all $k \geq 1$, $\nu \notin C_k(\S)$ and therefore for every $k \geq 1$, there exists a $t \in \S^u_u$ such that the set $\{t^j \nu t^{-j} \, | \, 1 \leq j \leq k\}$ has cardinality {$k$}.  It follows that  the set
	    \[
	        T
	        :=
	        \{
	            t \nu t\inv
	            \, | \, t\in \S^u_u
	        \}
	    \]
	    is infinite. 
	    However, Equation \eqref{eq:prop-of-h} implies that for any $\eta\in T$,
	    \[
	        \vert h(\eta) \vert
	        =
	        \vert h(\nu) \vert
	        > 0
	        .
	    \]
    	Now, if $K$ is any compact subset of $\G$, then the discrete set $\G^u_u$ must have finite intersection with $K$. In particular, the infinite set $T$ cannot be fully contained in $K$. So we have shown that there exists an $\epsilon >0$, namely $\epsilon := \vert h(\nu) \vert$, such that for any compact $K \subseteq \G$, there exists an $\eta\in \G$, namely $\eta\in T\cap (\G\setminus K)$, so that $\vert h(\eta) \vert \geq \epsilon$. Therefore, $h \not \in C_0(\G)$, which (by \cite[Proposition II.4.2]{renault}) contradicts our assumption that $h\in C_r^* (\G,c)$.
	\end{proof}

We require the following lemma to show that the normalizer of $C_r^* (\S,c)$ generates $C_r^*(\G,c)$ as a $C^*$-algebra. For the definition of the normalizer, see Definition~\ref{def:cartan}, Item~\ref{item:def-Cartan:normalizer}.

\begin{lemma}\label{lem:reg}
Suppose that the subgroupoid $\S$ is normal. If $h \in C_c(\G,c)$ is supported in a bisection, then $h$ is in the normalizer of $C_c(\S,c)$ in $C_c(\G, c)$. 
\end{lemma}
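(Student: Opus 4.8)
The plan is to show that if $h \in C_c(\G, c)$ is supported in a bisection $B$, then for every $f \in C_c(\S, c)$ both $h * f * h^*$ and $h^* * f * h$ lie in $C_c(\S, c)$. Since multiplication in $C_c(\G, c)$ preserves supports in the usual convolution sense, $h * f * h^*$ is a compactly supported continuous function on $\G$; the real content is showing its support lands in $\S$. I would first record the support bookkeeping: because $\supp(h) \subseteq B$ and $\supp(h^*) \subseteq B^{-1}$, and because $B$ is a bisection (so for each unit there is at most one element of $B$ with a given source, resp.\ range), the convolution $h * f * h^*$ evaluated at $\gamma$ is supported on elements of the form $b \sigma b'^{-1}$ with $b, b' \in B$, $\sigma \in \supp(f) \subseteq \S$, and $s(b) = r(\sigma)$, $s(\sigma) = s(b')$ — and in fact the bisection property forces $b$ and $b'$ to be determined, with $r(b) = r(b')$ since everything must be composable into a single element $\gamma$. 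So $b' = b$ in the sense that they share range; more carefully, $\gamma = b\sigma b'^{-1}$ needs $r(b) = r(\gamma) = r(b'^{-1})^{-1}$... one checks $b, b'$ both lie in the single-point set $B \cap \G^{r(\gamma)}$, hence $b = b'$. Thus the only possible nonzero values of $h * f * h^*$ occur at $\gamma = b\sigma b^{-1}$ for $b \in B$, $\sigma \in \S$ with $s(b) = r(\sigma) = s(\sigma)$.

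The key step is then invoking normality of $\S$: since $\sigma \in \S \subseteq \Iso(\G)$ and $b \in \G$ with $s(b) = r(\sigma) = s(\sigma)$, we have $b \sigma b^{-1} \in b \S b^{-1} \subseteq \S$ by the definition of normal subgroupoid. Hence every $\gamma$ in the support of $h * f * h^*$ lies in $\S$, so $h * f * h^* \in C_c(\S)$ as a set; one should also note it lies in $C_c(\S, c)$, i.e.\ is genuinely an element of the $*$-subalgebra, which is immediate since $C_c(\S, c)$ is just $C_c(\S)$ with the restricted convolution and involution, and the convolution of functions supported in $\S$ agrees whether computed in $C_c(\G, c)$ or $C_c(\S, c)$ (the cocycle $c$ restricts, and the sum defining the convolution at a point of $\S$ only ranges over elements of $\S$ because of the support constraint just established). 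The argument for $h^* * f * h$ is symmetric: its support consists of elements $b^{-1} \sigma b$ with $b \in B$, $\sigma \in \S$, $r(b) = r(\sigma) = s(\sigma)$, and again $b^{-1} \S b = b^{-1} \S (b^{-1})^{-1} \subseteq \S$ by normality (applying the definition with the element $b^{-1} \in \G$).

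I expect the main obstacle to be the support bookkeeping rather than the normality step — specifically, being careful that in a twisted convolution $f_1 * f_2(\gamma) = \sum_{\eta} f_1(\gamma\eta) f_2(\eta^{-1}) c(\gamma\eta, \eta^{-1})$ the cocycle factors are just unimodular scalars and never vanish, so they do not affect the support, and that the iterated convolution $(h*f)*h^*$ can be expanded as a double sum whose nonzero terms are parametrized exactly by the pairs $(b, \sigma)$ (or a single sum after using the bisection property to eliminate one summation variable). A clean way to handle this is to first compute $h * f$, observe it is supported in $B \cdot \supp(f) \subseteq B\S$, then convolve with $h^*$ supported in $B^{-1}$, getting support in $B \S B^{-1}$, and finally note $B\S B^{-1} \cap \G^{r} \subseteq \S$ for each unit-fiber by the bisection-plus-normality argument above. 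I would also remark, as the paper's sentence preceding the lemma hints, that this does \emph{not} follow purely formally: without normality the set $B\S B^{-1}$ need not be contained in $\S$, and without the bisection hypothesis the support of $h * f * h^*$ could spread out of $\S$ even when $\S$ is normal.
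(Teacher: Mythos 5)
Your proposal is correct and follows essentially the same route as the paper: expand the twisted convolution, note that a nonzero term forces $\sigma \in \S \subseteq \Iso(\G)$ so the two bisection elements $b, b'$ have equal sources and hence coincide, and then apply normality to conclude $b\sigma b^{-1} \in \S$. The one cosmetic slip is that the identification $b = b'$ should come from injectivity of $s$ on the bisection (since $s(b) = r(\sigma) = s(\sigma) = s(b')$), not from both lying in $B \cap \G^{r(\gamma)}$, which is not known for $b'$ a priori; this is exactly how the paper phrases it.
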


\begin{proof}
Suppose $f \in C_c(\S,c)$. Since $h \ast f \ast h^*$ and $h^* \ast f \ast h$ are continuous functions with compact support, we only need to show that they are supported on $\S$. For $\xi \in \G$ we have
\begin{align*}
    \left( h \ast f \right) \ast h^*(\xi)
    &= \sum_{\rho \in \G^{s(\xi)}} (h \ast f)(\xi \rho) h^*(\rho\inv ) c(\xi \rho, \rho\inv ) \\
    &= \sum_{\rho \in \G^{s(\xi)}} \left( \sum_{\nu \in \G^{s(\rho)}} h(\xi \rho \nu) f(\nu\inv ) c(\xi \rho \nu, \nu\inv ) \right)  h^*(\rho\inv ) c(\xi \rho, \rho\inv ) \\
    &= \sum_{\rho \in \G^{s(\xi)}} \sum_{\nu \in \G^{s(\rho)}}  h(\xi \rho \nu) f(\nu\inv ) \overline{h(\rho) c(\rho\inv , \rho)} c(\xi \rho \nu, \nu\inv ) c(\xi \rho, \rho\inv ).
\end{align*}

If a term in the sum is nonzero, then we must have $\nu\inv \in \S$\label{a-spot-where-we-use-S-sset-Iso}, 
and also both $\rho$ and $\xi \rho \nu$ must be in $\supp(h)$. Since $\nu\in\S \cap \G^{s(\rho)}\subseteq\Iso(\G)$, we have that $s(\xi \rho \nu) =  s(\nu) =  r(\nu) = s(\rho)$. Since $h$ is supported on a bisection, this implies $\xi \rho \nu = \rho$. Thus, the only summand that might not vanish corresponds to $\rho$ and $\nu$ such that $\xi = \rho \nu\inv \rho\inv$. The normality of $\S$ thus implies that $\xi \in \S$. Therefore $h \ast f \ast h^* \in C_c(\S,c)$.

A similar calculation shows that $h^* \ast f \ast h$ is supported on $\S$. Therefore $h$ lies in the normalizer of $C_c(\S,c)$.
\end{proof}

\begin{prop}\label{prop:normalizer-generates}
Assume the {\'e}tale groupoid $\G$ with 2-cocycle $c$ is locally compact and Hausdorff, 
and that the subgroupoid $\S$ of $\Iso (\G)$ is normal. 
Then the normalizer $N(C_r^*(\S,c))$  of $C_r^*(\S,c)$
generates $C_r^*(\G,c)$ as a $C^*$-algebra. 
\end{prop}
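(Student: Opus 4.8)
The plan is to reduce the statement to the dense $*$-subalgebra $C_c(\G,c)$ and then invoke Lemma~\ref{lem:reg}. Since $C^*_r(\G,c)$ is by definition the completion of $C_c(\G,c)$ and the $C^*$-algebra generated by $N(C^*_r(\S,c))$ is closed, it suffices to show that every $f\in C_c(\G,c)$ lies in the \emph{linear span} of $N(C^*_r(\S,c))$; taking closures then yields all of $C^*_r(\G,c)$.

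First I would chop an arbitrary $f\in C_c(\G,c)$ into finitely many pieces, each supported in an open bisection. Because $\G$ is \'etale, the open bisections form a basis for the topology \cite[Lemma 2.4.9]{AidansNotes}, so the compact set $\supp(f)$ is covered by finitely many open bisections $U_1,\dots,U_n$. Using local compactness and the Hausdorff property, choose a partition of unity $\phi_1,\dots,\phi_n\in C_c(\G)$ subordinate to this cover with $\sum_i\phi_i\equiv 1$ on $\supp(f)$; then $f=\sum_{i=1}^n \phi_i f$, and each $\phi_i f$ is an element of $C_c(\G,c)$ whose support is contained in the bisection $U_i$. By Lemma~\ref{lem:reg}, each $\phi_i f$ normalizes $C_c(\S,c)$ inside $C_c(\G,c)$: that is, $(\phi_i f)\ast g\ast(\phi_i f)^*$ and $(\phi_i f)^*\ast g\ast(\phi_i f)$ lie in $C_c(\S,c)$ for every $g\in C_c(\S,c)$.

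It remains to upgrade this to membership in $N(C^*_r(\S,c))$, the normalizer computed inside $C^*_r(\G,c)$. Fix $i$ and a general $b\in C^*_r(\S,c)\subseteq C^*_r(\G,c)$ (the inclusion being the one from Remark~\ref{rmk:inclusion}), and pick $g_k\in C_c(\S,c)$ with $g_k\to b$. Then $(\phi_i f)\ast g_k\ast(\phi_i f)^*\to (\phi_i f)\,b\,(\phi_i f)^*$ in $C^*_r(\G,c)$, and since every term of the sequence lies in the closed subalgebra $C^*_r(\S,c)$, so does the limit; the same argument applied to $(\phi_i f)^*\,b\,(\phi_i f)$ shows $\phi_i f\in N(C^*_r(\S,c))$. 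Hence $f=\sum_i \phi_i f$ lies in the linear span of $N(C^*_r(\S,c))$, and therefore in the $C^*$-algebra it generates; as noted above, this proves the proposition.

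The whole argument is essentially bookkeeping; the only point deserving care is the last passage from normalizing $C_c(\S,c)$ to normalizing $C^*_r(\S,c)$, which relies on knowing (Remark~\ref{rmk:inclusion}) that $C^*_r(\S,c)$ sits inside $C^*_r(\G,c)$ as a \emph{closed} subalgebra, so that the approximation stays inside it.
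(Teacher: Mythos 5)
Your proof is correct and follows essentially the same route as the paper's: cover $\supp(f)$ by finitely many open bisections, use a partition of unity to split $f$ into pieces handled by Lemma~\ref{lem:reg}, and then pass from $C_c(\S,c)$ to $C^*_r(\S,c)$ by a limit argument using that the latter is closed in $C^*_r(\G,c)$. The one minor (and welcome) difference is that you only place $f$ in the \emph{linear span} of $N(C^*_r(\S,c))$, whereas the paper asserts that the sum $\sum_i \phi_i f$ is itself a normalizer --- since the normalizer need not be closed under addition, your phrasing is the more careful one, and it still suffices because the $C^*$-algebra generated by the normalizer contains its linear span.
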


\begin{proof}
Suppose $h \in C_c(\G, c)$. Since $\G$ is {\'e}tale, its topology has a basis of open bisections (see \cite[Proposition 3.5]{Exel:Inverse-semigps} or \cite[Lemma 2.4.9]{AidansNotes}); in particular, we can take a finite collection $\{U_i\}_{i=1}^n$ of such sets which cover the compact support of $h$. As $\G$ is assumed to be locally compact Hausdorff, we can choose a partition of unity $\{\xi_i\}_{i=1}^n$ subordinate to that cover. 
The pointwise products $h_i=\xi_i \cdot h$ belong to $C_c(\G, c)$ with $\supp(h_i) \subseteq U_i$, and $h=\sum_{i=1}^n h_i$.

By Lemma \ref{lem:reg}, $h_i$ is in the normalizer of $C_c(\S,c)$ for each $i=1, \ldots, n$. Therefore $h=\sum_{i=1}^n h_i$ is in the normalizer of $C_c(\S,c)$. Thus $C_c(\G,c)$ is contained in the normalizer of $C_c(\S,c)$.

Now, suppose $(f_{n})_n \subseteq  C_c (\S, c)$ converges to $f$ in $C_{r}^{*}(\S, c)$. If $h\in C_{c} (\G, c)$, then $h\ast f_{n} \ast h^*$ is an element of $C_{c} (\S, c)$ by the above argument and so its  $C_{r}^* (\G, c)$-limit $h\ast f \ast h^*$ is an element of $C_{r}^* (\S, c)$. We have shown that $C_{c} (\G, c)$ is also contained in the normalizer of $C_r^*(\S,c)$, which hence generates $C_r^*(\G,c)$ as a $C^*$-algebra. 
\end{proof}
\begin{prop} \label{prop:conditional-expectation}
Assume 
the subgroupoid $\S$ of $\Iso(\G)$ is clopen. Then there is a faithful conditional expectation $\Phi\colon  C_r^*(\G,c) \rightarrow C_r^*(\S,c)$.
\end{prop}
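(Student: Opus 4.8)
The plan is to construct $\Phi$ as the restriction map on $C_0(\G)$, pulled back through the inclusion $j\colon C^*_r(\G,c)\hookrightarrow C_0(\G)$. Concretely, for $h\in C^*_r(\G,c)$ with image $j(h)\in C_0(\G)$, I would define $\Phi(h)$ to be the element of $C^*_r(\S,c)$ whose image in $C_0(\G)$ is $\chi_\S\cdot j(h)$. The first task is to check this is well-defined, i.e.\ that $\chi_\S\cdot j(h)$ really does lie in the image of $C^*_r(\S,c)$ under $j$. This is exactly where Lemma~\ref{lem:Bs-elts-have-supp-in-S} does the work: $\chi_\S\cdot j(h)$ is supported in $\S$ (trivially), and the lemma characterizes elements of $C^*_r(\S,c)$ precisely as those whose image in $C_0(\G)$ is supported in $\S$ — but I need to know $\chi_\S\cdot j(h)$ is genuinely in the image of $j$, not just a function supported on $\S$. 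For this I would mimic the Cauchy-sequence argument in the proof of Lemma~\ref{lem:Bs-elts-have-supp-in-S}: pick $h_n\in C_c(\G,c)$ with $h_n\to h$ in reduced norm, set $h'_n=\chi_\S\cdot h_n\in C_c(\S,c)$ (using clopenness of $\S$), note $\|\chi_\S\cdot f\|_r\le\|f\|_r$ so $(h'_n)$ is Cauchy in $C^*_r(\S,c)$, and observe that $j$ of its limit equals $\lim\chi_\S\cdot j(h_n)=\chi_\S\cdot j(h)$. So $\Phi(h):=\lim_n h'_n$ is a well-defined element of $C^*_r(\S,c)$, independent of the approximating sequence since $j$ is injective.

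Next I would verify the defining properties of a conditional expectation. \emph{Linearity} is immediate from linearity of $j$ and of multiplication by $\chi_\S$. \emph{Contractivity}: $\|\Phi(h)\|_r=\lim\|h'_n\|_r=\lim\|\chi_\S\cdot h_n\|_r\le\lim\|h_n\|_r=\|h\|_r$, using the norm inequality quoted above. \emph{Idempotence / restriction to $C^*_r(\S,c)$}: if $h\in C^*_r(\S,c)$ then $j(h)$ is already supported in $\S$ by Lemma~\ref{lem:Bs-elts-have-supp-in-S}, so $\chi_\S\cdot j(h)=j(h)$ and $\Phi(h)=h$. (It then follows automatically that $\Phi^2=\Phi$ and that $\Phi$ is a projection of norm one onto $C^*_r(\S,c)$, and by Tomiyama's theorem it is automatically positive and $C^*_r(\S,c)$-bimodular, though one can also check bimodularity directly on $C_c$-level functions since $\chi_\S\cdot(f*h)=f*(\chi_\S\cdot h)$ when $\supp f\subseteq\S\subseteq\Iso(\G)$.)

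The remaining, and genuinely substantive, point is \emph{faithfulness}: if $\Phi(h^*h)=0$ then $h=0$. Here I would invoke the standard fact (see \cite[Proposition II.4.2]{renault}) that for an \'etale groupoid the value of $j(a)$ at a unit $u\in\G\z$ computes a vector state: $j(a^*a)(u)=\langle a\,\delta_u,\,a\,\delta_u\rangle_{\ell^2(\G_u)}\ge 0$. Since $\G\z\subseteq\S$, the hypothesis $\Phi(h^*h)=0$ forces $\chi_\S\cdot j(h^*h)=0$, hence in particular $j(h^*h)(u)=0$ for every unit $u$, i.e.\ $h\,\delta_u=0$ in $\ell^2(\G_u)$ for all $u$. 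By the definition of the reduced norm \eqref{eq:reduced-norm} — which is the supremum over exactly these representations — this gives $\|h\|_r=0$, so $h=0$.

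\textbf{Main obstacle.} The one place requiring care is the well-definedness step: one must confirm that $\chi_\S\cdot j(h)$ lies in the \emph{image} of $j\colon C^*_r(\S,c)\to C_0(\G)$ (so that $\Phi(h)$ makes sense as an element of the algebra, not merely of $C_0(\G)$), and the clopenness of $\S$ is essential both to keep $\chi_\S\cdot h_n$ inside $C_c(\S,c)$ and to apply Lemma~\ref{lem:Bs-elts-have-supp-in-S}. Everything else — linearity, contractivity, the bimodule property, and faithfulness — follows routinely from properties of $j$ and the structure of the reduced norm, without needing the ``immediately centralizing'' or ``maximal abelian'' hypotheses (indeed this proposition only uses that $\S$ is clopen).
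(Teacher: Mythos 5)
Your proposal is correct and follows essentially the same strategy as the paper: define $\Phi$ on the dense subalgebra by cutting down to $\S$ (the paper writes $\Phi_0(f)=\iota(f|_\S)$, which is your $\chi_\S\cdot f$), extend by contractivity, invoke Tomiyama's theorem for the conditional-expectation property, and check faithfulness by evaluating $j(h^*h)$ at units. The one point worth flagging is where the analytic weight of the argument sits. You justify contractivity by citing the inequality $\norm{\chi_\S\cdot f}_r\le\norm{f}_r$ as ``quoted above'' in Lemma~\ref{lem:Bs-elts-have-supp-in-S} --- but that lemma merely asserts the inequality without proof, and the place where the paper actually \emph{proves} it is the proof of this very proposition, via the compression identity $P\,\pi_u(f)\,\Gamma(\xi)=\Gamma\bigl(L^u(\iota(f|_\S))(\xi)\bigr)$ for $P$ the projection of $\ell^2(\G_u)$ onto $\overline{\lspan}\{\mathrm{e}_\gamma:\gamma\in\S_u\}$. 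So a self-contained version of your argument still needs that compression computation somewhere; as written, your proof outsources the only nontrivial estimate to an unproved assertion. Two minor further remarks: your well-definedness step (the Cauchy-sequence argument showing $\chi_\S\cdot j(h)$ lies in the image of $j$ restricted to $C^*_r(\S,c)$) is a correct but slightly roundabout repackaging of ``extend the contractive map $\Phi_0$ from $C_c(\G,c)$ by continuity''; and in the faithfulness step, the cleanest conclusion from $j(h^*h)(u)=\sum_{\gamma\in\G_u}|j(h)(\gamma)|^2=0$ for all $u$ is that $j(h)=0$ and hence $h=0$ by injectivity of $j$, rather than appealing directly to the definition of the reduced norm (which takes a supremum over all $\xi\in\ell^2(\G_u)$, not just $\delta_u$).
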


\begin{proof} Since $\S$ is open in $\G$, there is an injective $*$-homomorphism $\iota\colon  C_c(\S,c) \rightarrow C_c(\G,c)$ given by extension by zero.   By Remark \ref{rmk:inclusion}, the function $\iota$  extends to an inclusion \[\iota\colon  C_r^*(\S,c) \rightarrow C_r^*(\G,c).\] For this proof, let $M_r=\iota(C_r^*(\S,c)) \subseteq C_r^*(\G,c).$ We claim that the function $\Phi_{0}\colon  C_c(\G, c) \rightarrow M_r$   given by $\Phi_0(f) = \iota(f|_{\S})$  extends to a conditional expectation.

First, observe that $f|_{\S} \in C_c(\S,c)$ for all $f \in C_c(\G,c)$, because $\S$ is clopen. 
Thus, $\Phi_{0}$ is well-defined. 
Clearly, $\Phi_0$ is linear and idempotent on $C_c(\S,c)$.  
We will show that $\Phi_0$ is contractive, i.e.~that for $f \in C_c(\G,c),$ we have
\[
\|\Phi_0(f) \|_{M_r} \leq \|f\|_{C_r^*(\G,c)},\]
so that $\Phi_0$ will extend to a linear, contractive map $\Phi$ on all of $C_r^*(\G, c)$ which fixes $C_{r}^{*}(\S, c)$ and is hence a conditional expectation.

{For $u \in \G^{(0)}$,} let $L^u\colon  C^*_r(\S, c) \rightarrow \mathcal{B}(\ell^2(\S_u))$ be the left regular representation, given for $g\in C_c (\S,c)$ and $\xi\in \ell^2 (\S_u)$ by
\begin{equation}
\label{eq:left-reg}
L^u(g)(\xi) = g \ast \xi =
\left[ \gamma \mapsto \sum_{\eta \in \G^u} g(\gamma \eta) \xi(\eta^{-1}) c(\gamma \eta, \eta^{-1}) \right]
.\end{equation}
By the definition of the norm $\| \cdot \|_r$ on $C_r^* (\S,c)$ (see Equation \eqref{eq:reduced-norm}),  
we can find, for any $f\in C_c (\G,c)$, a unit $u \in \S\z = \G\z$ such that  
$\|\iota(f|_{\S})\|_r  \leq \|L^u(\iota(f|_{\S}))\| +\epsilon$. 

Let $\pi_u\colon  C^*_r(\G,c) \rightarrow \mathcal{B}(\ell^2(\G_u))$ be the left regular representation (given by the same formula as Equation \eqref{eq:left-reg}).  Let $P \in \mathcal{B}(\ell^2(\G_u))$ be the orthogonal projection onto
$\overline{\mathrm{span}} \{\mathrm{e}_\gamma  :  \gamma \in \S_u\}
$, where $\{\mathrm{e}_{\gamma}\}_{\gamma\in\G_{u}}$ denotes the standard orthonormal basis of $\ell^{2}(\G_{u})$.
There is a canonical unitary isomorphism $\Gamma\colon  \ell^2(\S_u) \rightarrow P\ell^2(\G_u)$ given by $\Gamma(\sum_{\gamma \in \S_u} a_\gamma \mathrm{e}_\gamma)= \sum_{\gamma \in \G_u} \chi_{\S}(\gamma) a_\gamma \mathrm{e}_\gamma$, where $\chi_{\S}$ denotes the characteristic function of $\S$.   It is easy to check that for all $\xi \in \ell^2(\S_u)$, 
\[ P \pi_u(f)(\Gamma(\xi)) = 
    \Gamma
    \Bigl(
        L^u
            \bigl(
                \iota(f|_{\S})
            \bigr)(\xi)
    \Bigr),\]
and so with $\epsilon$ and $u$ as above for the fixed $f\in C_c(\G,c)$, we have
\[ \|\Phi_0(f) \|_r = \|\iota(f|_{\S})\|_r \leq \|L^u(\iota(f|_{\S}))\|+\epsilon = \|P\pi_u(f)\| + \epsilon \leq \|f\|_r +\epsilon.\]

Thus $\Phi_0$ extends to a linear idempotent $\Phi\colon  C_r^*(\G,c) \rightarrow M_r$, which has  norm 1 since it acts as the identity on $M_r \subseteq C^*_r(\G, c)$. By \cite[II.6.10.2]{blackadar-opalgs}, $\Phi$ is a conditional expectation.
To see that $\Phi$ is faithful, 
 we follow the same idea as in the proof of \cite[II.4.8]{renault}. For $f\in C_{r}^*(\G,c)$ and $u\in \G\z=\S\z$, we have
 \[
    \Phi (f^* \ast f)(u)
    =
    (f^* \ast f)(u)
    =
    \sum_{\gamma \in \G_{u}} f^* (\gamma\inv) f(\gamma) c(\gamma, \gamma\inv)
    =
    \sum_{\gamma \in \G_{u}} \abs{f(\gamma)}^{2}.
 \]
 In particular, if $\Phi (f^* \ast f) =0$, then $\sum_{\gamma \in \G_{u}} \abs{f(\gamma)}^{2}=0$ for each $u\in \G\z$ and hence $f(\gamma)=0$ for every $\gamma\in \G$.
\end{proof}

\begin{proof}[Proof of Theorem \ref{thm:main}]
We know from the remarks preceding \cite[Proposition 4.1]{renault-cartan} that $C_0(\G\z)$ always contains an approximate unit for $C^*_{r}(\G,c)$; hence, so does $C^*_r(\S, c)$, and Condition \ref{item:def-Cartan:Approx1} of Definition \ref{def:cartan} holds.  Propositions \ref{prop:masa}, \ref{prop:conditional-expectation}, and \ref{prop:normalizer-generates}, imply, respectively, that Conditions \ref{item:def-Cartan:masa}, \ref{item:def-Cartan:CondExp}, and \ref{item:def-Cartan:normalizer}  of Definition \ref{def:cartan} are satisfied.
\end{proof}

\section{Weyl groupoid and Weyl twist}
\label{sec:weyl}
Having identified  Cartan subalgebras inside twisted groupoid $C^*$-algebras in Theorem \ref{thm:main}, we can use Renault's machinery \cite{renault-cartan} to identify an alternative   groupoid model---one that is topologically principal---for these algebras.  Our next goal is to analyze the relationship between the original groupoid data $(\G, \S, c)$ and the Weyl groupoid and twist associated to the Cartan pair $(C^*_r(\G, c), C^*_r(\S, c))$ as in \cite[Section 4]{renault-cartan}.  Section \ref{sec:anna}  analyzes the general structure of this relationship in the setting when $\G$ is a discrete group, and Section \ref{sec:apps} computes the Weyl groupoid and twist explicitly in several examples.

The results in this section, particularly Proposition \ref{prop:weyl-picture}  which is the main result of this section, will facilitate our description and analysis of the Weyl groupoid in Section \ref{sec:anna}.  We heartily thank Aidan Sims for  suggesting Proposition \ref{prop:weyl-picture} to us, and for helpful discussions relating to its proof.

Throughout the current section,  $B$ will denote a Cartan subalgebra of a  separable $C^*$-algebra $A$ with $\Phi\colon A\to B$ the conditional expectation.

 \begin{prop}
\label{prop:weyl-picture}
    Suppose there exists a subset $N$ of $N(B)$ which densely spans $A$. Then every element of the Weyl groupoid associated to $(A,B)$ can be represented by some $(\alpha_m(x), m, x)$ where $m \in N$.
\end{prop}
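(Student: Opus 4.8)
The plan is to pass to Renault's groupoid model for $(A,B)$ and turn the statement into a pointwise claim about functions. By the reconstruction theorem \cite[Theorem~5.2]{renault-cartan} there is a $*$-isomorphism $A \cong C^*_r(\G_{(A,B)}, \Sigma_{(A,B)})$ carrying $B$ onto $C_0(\G_{(A,B)}\z)$, and since $\G_{(A,B)}$ is an \'etale groupoid we may compose this isomorphism with the norm-decreasing embedding of a reduced twisted groupoid $C^*$-algebra into $C_0$ of its twist (the analogue for $\Sigma_{(A,B)}$ of the map $j$ in \cite[Proposition II.4.2]{renault}). This produces an injective \emph{linear} map $j\colon A \to C_0(\Sigma_{(A,B)})$ with $\norm{j(a)}_{\infty}\le\norm{a}$ for all $a\in A$. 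Write $[\sigma]$ for the image of $\sigma\in\Sigma_{(A,B)}$ under the quotient map $\Sigma_{(A,B)}\to\Sigma_{(A,B)}/\mathbb{T}\cong\G_{(A,B)}$. The one structural fact I will need about $j$ describes it on normalizers: for $m\in N(B)$, the function $j(m)$ vanishes off $\{\sigma\in\Sigma_{(A,B)} : [\sigma]\in B(m)\}$, where $B(m):=\{[\alpha_m(x),m,x] : x\in\mathrm{dom}(m)\}$ is the open bisection attached to $m$, while $\abs{j(m)(\sigma)}^{2} = (m^*m)(s([\sigma]))$ whenever $[\sigma]\in B(m)$. Equivalently: $j(m)(\sigma)\neq 0$ if and only if $[\sigma] = [\alpha_m(x),m,x]$ with $x = s([\sigma])\in\mathrm{dom}(m)$. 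I would isolate this as a preliminary lemma; it is standard, being implicit in Renault's construction of the isomorphism \cite{renault-cartan} (see also \cite{AidansNotes}).

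Granting that lemma, let $\gamma_0$ be an arbitrary element of $\G_{(A,B)}$. By the definition of the Weyl groupoid, $\gamma_0 = [\alpha_n(x_0),n,x_0]$ for some $n\in N(B)$ and $x_0 = s(\gamma_0)\in\mathrm{dom}(n)$; choose any $\sigma_0\in\Sigma_{(A,B)}$ with $[\sigma_0] = \gamma_0$. Since $x_0\in\mathrm{dom}(n)$, the lemma gives $j(n)(\sigma_0)\neq 0$, so set $\varepsilon := \tfrac12\abs{j(n)(\sigma_0)} > 0$. As $N$ densely spans $A$, pick $m_1,\dots,m_k\in N$ and scalars $\lambda_1,\dots,\lambda_k$ with $\norm{\,n - \sum_{i=1}^{k}\lambda_i m_i\,} < \varepsilon$. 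Because $j$ is linear and norm-decreasing, $\norm{\,j(n) - \sum_{i=1}^{k}\lambda_i\, j(m_i)\,}_{\infty} < \varepsilon$; evaluating at $\sigma_0$ and using the triangle inequality yields $\abs{\sum_{i=1}^{k}\lambda_i\, j(m_i)(\sigma_0)} \ge \abs{j(n)(\sigma_0)} - \varepsilon = \varepsilon > 0$. Hence $j(m_i)(\sigma_0)\neq 0$ for at least one index $i$; fix such an $i$ and put $m := m_i\in N$.

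Applying the lemma once more, $j(m)(\sigma_0)\neq 0$ forces $\gamma_0 = [\sigma_0]\in B(m)$, that is, $\gamma_0 = [\alpha_m(x),m,x]$ for some $x\in\mathrm{dom}(m)$; comparing sources gives $x = s(\gamma_0) = x_0$, so $\gamma_0 = [\alpha_m(x_0),m,x_0]$ with $m\in N$. Since $\gamma_0$ was arbitrary, every element of the Weyl groupoid is represented by a triple $(\alpha_m(x),m,x)$ with $m\in N$, which is the assertion.

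The main --- and essentially the only non-formal --- step is the lemma about $j$ on normalizers. It comes out of unwinding Renault's isomorphism: $j(m)$ is supported on an open bisection precisely because $m$ is a normalizer, and the relation $j(m)^{*}\ast j(m) = j(m^*m) = m^*m \in C_0(\G_{(A,B)}\z)$ (valid on the dense $*$-subalgebra of compactly supported functions, hence in the limit) pins down $\abs{j(m)}$ on that bisection. One could instead avoid the reconstruction theorem by defining the relevant ``Fourier coefficient'' of $a\in A$ at $[\alpha_m(x),m,x]$ directly as $(m^*m)(x)^{-1/2}\,\Phi(m^*a)(x)$ and verifying its properties from the axioms of a Cartan pair, but working in the groupoid model is cleaner. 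Note that no hypothesis on $\G$ or $\S$ from Theorem~\ref{thm:main} is used; Proposition~\ref{prop:weyl-picture} concerns arbitrary separable Cartan pairs.
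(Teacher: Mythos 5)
Your proof is correct, but it takes a genuinely different route from the paper's. The paper stays entirely inside the abstract Cartan pair: it proves that $\Phi(n^*m)(x)=0$ whenever $x\notin\mathrm{dom}(n)\cap\mathrm{dom}(m)$ or $\alpha_n(x)\neq\alpha_m(x)$ (Lemma~\ref{lem:nbhd for alpha_i}), deduces the norm estimate $\norm{n-m}^2\geq n^*n(x)$ (Corollary~\ref{lem:first}), handles finite sums $n=\sum_i n_i$ by a pigeonhole argument on $\Phi(n^*n)(x)=\sum_i\Phi(n^*n_i)(x)$ (Lemma~\ref{lem:alpha_i}), and then passes to limits by choosing a compact neighborhood $K$ of $x$ on which $n^*n$ is bounded below, so that $\alpha_n$ and $\alpha_{n_q}$ agree on all of $K^o$. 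You instead invoke Renault's reconstruction theorem to realize $A$ inside $C_0(\Sigma_{(A,B)})$ and collapse both stages into a single evaluation at one point $\sigma_0$ lying over $\gamma_0$: since nonvanishing of $j(m)$ at $\sigma_0$ already encodes agreement of $\alpha_m$ with $\alpha_n$ on a neighborhood (by the definition of the Weyl groupoid), no separate compactness argument is needed. Your approach is shorter and avoids the two-step induction, at the cost of importing the reconstruction machinery (\cite[Theorem 5.2, Lemma 5.3]{renault-cartan}) as a black box; there is no circularity in doing so, since the proposition is not used in proving that theorem. Be aware, though, that your ``preliminary lemma'' carries essentially all of the technical weight, and it is not quite a free consequence of Lemma~\ref{lem:nbhd for alpha_i}: to show $j(m)$ vanishes at a class $[\alpha_n(y),n,y]\notin B(m)$ with $y\in\mathrm{dom}(m)$ and $\alpha_n(y)=\alpha_m(y)$ but with no neighborhood of agreement, you need the continuity of $\Phi(n^*m)$ applied to a net of points where the maps disagree --- which is exactly the argument the paper runs inside Lemma~\ref{lem:alpha_i}. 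So the two proofs ultimately rest on the same computation; yours repackages it via the twist, the paper's keeps it intrinsic to $(A,B,\Phi)$.
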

Before proving this result we will make some clarifying observations about the Weyl groupoid defined in Section \ref{sec:background} and prove several preparatory lemmas.  Recall Equation~\eqref{eq:defining-eq-for-alpha}, the    defining equation of the partial homeomorphisms $\alpha_n$:   $$\forall \ b \in B , \quad
        n^* b n 
        =
        (b\circ\alpha_{n}) \cdot n^* n.$$ Notice that $b \circ \alpha_{n}$ is a function that might  be only partially defined \textup(in which case it is not an element of $B$\textup). But since the function $n^*n \in B \cong C_0(\widehat B)$ vanishes wherever $ \alpha_{n}$  does not make sense, one unambiguously defines for any $x\in \widehat{B}:$
\begin{align*}
        \bigl( n^* n \cdot  (b\circ \alpha_{n }) \bigr) (x)
        =  & \, 
        \bigl( (b\circ \alpha_{n }) \cdot n^* n   \bigr) (x)
        \\
        :=&
        \left\{
            \begin{array}{cc}
                 n^* n (x) \cdot  b (\alpha_{n } (x))& \text{ if } x\in \mathrm{dom}(n),  \\
                0 & \text{ otherwise.} 
            \end{array}
        \right.
    \end{align*}

    Indeed, $n^*n \cdot (b \circ \alpha_n) \in C_0(\widehat B) \cong B$ for any $b \in B, n \in N(B),$ since $b, n^*n \in C_0(\widehat B)$ and $\alpha_n$ is a homeomorphism defined on the domain of $n^*n.$

Proofs of the following two lemmas are straightforward using Equation \eqref{eq:defining-eq-for-alpha} (which uniquely determines $\alpha_{n}$), the $C^*$-identity, and the fact that $B$ is maximal abelian in $A$. 

\begin{lemma}\label{lem:stronger-def-for-alpha}
    Suppose $b\in B$  vanishes outside of $\mathrm{dom}(n)$, so that 
    \[
        (b\circ\alpha_{n^*}) (x)
        =
        \begin{cases}
            0 & \text{ if } x\notin \mathrm{dom}(n^*),
            \\
            b \left( \alpha_{n^*} (x) \right)
            & \text{ otherwise},
        \end{cases}
    \]
    is a globally defined continuous function on $\widehat{B}$, i.e.\ an element of $B$. Then $nb = (b\circ\alpha_{n^*})n$.
\end{lemma}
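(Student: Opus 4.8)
The plan is to prove the identity $nb = (b\circ\alpha_{n^*})\,n$ by using Equation~\eqref{eq:defining-eq-for-alpha} applied to $n^*$ together with maximal abelianness of $B$. First I would set $b' := b\circ\alpha_{n^*} \in B$, which is a genuine element of $B$ by the hypothesis that $b$ vanishes outside $\mathrm{dom}(n)= \mathrm{dom}((n^*)^*)$, so that the composition extends continuously by $0$. The goal is then the operator identity $nb = b'n$ in $A$. Since $B$ is maximal abelian, and since $nb$ and $b'n$ both lie in $A$, it suffices to show that $x^*(nb) = x^*(b'n)$ for enough $x$, or more efficiently, to show $(nb)^*(nb) = (b'n)^*(b'n)$ and $(nb)^*(b'n)$ has the expected form; the cleanest route is to compare $(nb - b'n)^*(nb-b'n)$ and show it is $0$ using the $C^*$-identity.

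More concretely, I would compute $(nb)^*(nb) = b^* n^* n b = b^*\,\overline{b}\cdot(n^*n\circ\mathrm{id})\cdot\ldots$ — more carefully, $b^*(n^*n)b = |b|^2\, n^*n$ as functions in $B$. On the other side, $(b'n)^*(b'n) = n^* b'^* b' n = n^*(|b'|^2) n = (|b'|^2\circ\alpha_n)\cdot n^*n$ by Equation~\eqref{eq:defining-eq-for-alpha} applied with the element $|b'|^2 = |b\circ\alpha_{n^*}|^2 \in B$. Now $|b'|^2\circ\alpha_n = |b\circ\alpha_{n^*}\circ\alpha_n|^2$, and on $\mathrm{dom}(n^*n) = \mathrm{dom}(n)$ we have $\alpha_{n^*}\circ\alpha_n = \alpha_{n^*n}$ which agrees with the identity on (a neighborhood inside) $\mathrm{dom}(n)$ — more precisely $\alpha_{n^*}\circ\alpha_n = \id$ on $\mathrm{dom}(n)$ since $\alpha_{n^*}=\alpha_n^{-1}$. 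Hence $|b'|^2\circ\alpha_n = |b|^2$ on $\mathrm{dom}(n)$, and multiplying by $n^*n$ (which vanishes off $\mathrm{dom}(n)$) gives $(b'n)^*(b'n) = |b|^2\, n^*n = (nb)^*(nb)$. Similarly I would compute the cross term $(nb)^*(b'n) = b^* n^* b' n = b^*\,(b'\circ\alpha_n)\cdot n^*n = \overline{b}\,(b\circ\alpha_{n^*}\circ\alpha_n)\, n^*n = \overline{b}\, b\, n^*n = |b|^2 n^*n$, using the same cancellation, and likewise $(b'n)^*(nb) = |b|^2 n^*n$. Therefore $(nb - b'n)^*(nb - b'n) = |b|^2 n^*n - |b|^2 n^*n - |b|^2 n^*n + |b|^2 n^*n = 0$, and by the $C^*$-identity $nb = b'n$.

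The main obstacle — and the point deserving the most care — is the bookkeeping around partially defined compositions: $b\circ\alpha_{n^*}$ must be verified to be a genuine element of $B$ (this is exactly what the hypothesis on $b$ buys us), and then the composition $(b\circ\alpha_{n^*})\circ\alpha_n$ must be handled only on $\mathrm{dom}(n)$, where $\alpha_{n^*}$ and $\alpha_n$ are mutually inverse homeomorphisms onto the appropriate domains; off $\mathrm{dom}(n)$ everything is multiplied against $n^*n$, which vanishes there, so the ambiguity is harmless. I would also double-check that the defining equation \eqref{eq:defining-eq-for-alpha} is genuinely being applied to the normalizer element $n^*$ (whose associated partial homeomorphism is $\alpha_{n^*} = \alpha_n^{-1}$) and to the honest $B$-elements $|b'|^2$ and $b'$, so that all invocations are legitimate. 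Once these domain issues are pinned down, the rest is the routine $C^*$-identity argument sketched above.
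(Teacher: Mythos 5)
Your proof is correct: the computation $(nb-(b\circ\alpha_{n^*})n)^*(nb-(b\circ\alpha_{n^*})n)=0$ goes through, since each of the four terms reduces to $|b|^2\,n^*n$ via Equation~\eqref{eq:defining-eq-for-alpha}, the identity $\alpha_{n^*}\circ\alpha_n=\id$ on $\mathrm{dom}(n)$, and the vanishing of $n^*n$ off $\mathrm{dom}(n)$. The paper omits the proof but cites exactly these ingredients (the defining equation for $\alpha_n$ and the $C^*$-identity), so your argument is essentially the intended one.
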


\begin{lemma}\label{lem:alpha-n=id-implies-n-in-B}
    If $n\in N(B)$ has the property that $\alpha_n = \mathrm{id}|_{\mathrm{dom}(n)}$, then $n\in B$.
\end{lemma}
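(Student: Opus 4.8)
The plan is to show that $n$ commutes with every element of $B$; since $B$ is maximal abelian in $A$, this forces $n\in B$. (The relative commutant of a masa equals the masa itself: any $a\in A$ commuting with $B$ decomposes into commuting self-adjoint parts, each of which generates an abelian $C^*$-algebra together with $B$, so two applications of maximality put $a\in B$.) Thus everything reduces to proving $nb=bn$ for all $b\in B$.

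First I would record the two algebraic consequences of the hypothesis $\alpha_n=\mathrm{id}|_{\mathrm{dom}(n)}$. Because $\alpha_{n^*}=\alpha_n^{-1}$ and the range of $\alpha_n$ is $\mathrm{dom}(n^*)$, the hypothesis forces $\mathrm{dom}(n^*)=\mathrm{dom}(n)$ and $\alpha_{n^*}=\mathrm{id}|_{\mathrm{dom}(n)}$ as well. Substituting $\alpha_n=\mathrm{id}$ into the defining equation \eqref{eq:defining-eq-for-alpha} and using that $n^*n$ vanishes off $\mathrm{dom}(n)$ (so that $(b\circ\alpha_n)\cdot n^*n=b\cdot n^*n$ under the convention of Section~\ref{sec:background}) yields the key relation
\begin{equation*}
    n^* b n = b\, n^* n \quad (b\in B). \tag{$\star$}
\end{equation*}
Separately, Lemma~\ref{lem:stronger-def-for-alpha} applied with $\alpha_{n^*}=\mathrm{id}$ gives $nb=bn$ for every $b\in B$ that vanishes outside $\mathrm{dom}(n)$.

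The remaining task, and the main technical point, is to upgrade this last identity from functions supported in $\mathrm{dom}(n)$ to all of $B$. I would do this with an approximate-unit argument. Writing $p:=n^*n\in B$, choose $g_k\in C_0([0,\infty))$ with $g_k(0)=0$ and $g_k(t)\to 1$ for $t>0$ (for instance $g_k(t)=t/(t+\tfrac1k)$), so each $g_k(p)\in B$ vanishes off $\mathrm{dom}(n)$. The $C^*$-identity together with $(\star)$ gives $\|(1-g_k(p))n\|^2=\|(1-g_k(p))^2 p\|\to 0$, and the parallel computation $\|n(1-g_k(p))\|^2=\|(1-g_k(p))^2 p\|\to 0$ (this one using only commutativity of $B$) shows $g_k(p)n\to n$ and $n g_k(p)\to n$ in norm. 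Now for arbitrary $b\in B$, the element $g_k(p)b\in B$ vanishes off $\mathrm{dom}(n)$, so the previous paragraph gives $n\,(g_k(p)b)=(g_k(p)b)\,n=b\,g_k(p)\,n$, i.e.\ $n\,g_k(p)\,b=b\,g_k(p)\,n$. Letting $k\to\infty$ and invoking the two norm limits turns this into $nb=bn$, and maximal abelianness of $B$ then finishes the proof.

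I expect the main obstacle to be precisely this extension step. Lemma~\ref{lem:stronger-def-for-alpha} only controls $n$ against functions supported where $n^*n$ is nonzero, so one must first verify that $n$ is genuinely ``supported'' on $\mathrm{dom}(n)$ — encoded by $g_k(p)n\to n$ — before the commutation relation can be globalized. Establishing those convergences is where $(\star)$ and the $C^*$-identity do the real work, and the equal-domains observation $\mathrm{dom}(n^*)=\mathrm{dom}(n)$ is what makes both one-sided limits available simultaneously.
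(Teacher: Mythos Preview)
Your argument is correct, but it takes a longer route than the paper intends. The paper's hint (``straightforward using Equation~\eqref{eq:defining-eq-for-alpha}, the $C^*$-identity, and the fact that $B$ is maximal abelian'') points to a direct computation: from your relation $(\star)$, namely $n^*bn = b\,n^*n$ for all $b\in B$, one simply expands
\[
(nb-bn)^*(nb-bn)=b^*n^*nb - b^*n^*bn - n^*b^*nb + n^*b^*bn,
\]
and observes that each of the four terms equals $b^*b\,n^*n$ (using $(\star)$ and commutativity of $B$), so the expression vanishes and $nb=bn$ for \emph{every} $b\in B$ at once. Maximal abelianness then finishes the proof. No appeal to Lemma~\ref{lem:stronger-def-for-alpha} or to an approximate unit is needed.

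Your detour through functions supported on $\mathrm{dom}(n)$ and the functional-calculus approximants $g_k(p)$ is perfectly valid, and it does make explicit the intuition that $n$ is ``supported'' on $\mathrm{dom}(n)$; but the one-line $C^*$-identity trick above gets to $nb=bn$ globally without first localizing.
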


\begin{lemma}  \label{lem:nbhd for alpha_i}
    Let $n, m \in N(B)$. If either
    \begin{enumerate}[label=(\arabic*), font=\upshape]
        \item\label{item:not-in-domain=Phi-zero} $x\notin\mathrm{dom}(n) \cap \mathrm{dom}(m)$, or
        \item\label{item:in-domain+alph-uneq=Phi-zero} $x \in \mathrm{dom}(n) \cap \mathrm{dom}(m)$ satisfies $\alpha_n(x) \neq \alpha_m(x)$, 
    \end{enumerate}
    then $\Phi(n^*m)(x)=0$.
\end{lemma}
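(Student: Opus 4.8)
The plan is to prove both cases by exploiting the defining relation \eqref{eq:defining-eq-for-alpha} for the partial homeomorphisms $\alpha_n$, together with the fact that $\Phi$ restricted to $B$ is the identity and the description of $B \cong C_0(\widehat B)$ via Gelfand duality. The key computational device will be to apply $\Phi$ to expressions of the form $b_1 (n^*m) b_2$ with $b_1, b_2 \in B$ and to use the module property $\Phi(b_1 a b_2) = b_1 \Phi(a) b_2$ of a conditional expectation, which lets us localize near the point $x$.

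For case \ref{item:not-in-domain=Phi-zero}, suppose without loss of generality $x \notin \mathrm{dom}(n)$, i.e.\ $n^*n(x) = 0$. I would choose $b \in B$ with $b(x) = 1$; then $b \cdot (n^*n) \in B$ vanishes at $x$, and since $n^*n \in C_0(\widehat B)$ is continuous, for any $\epsilon>0$ we can arrange (shrinking $b$) that $\|b \cdot n^* n\| < \epsilon$. Now observe that $\Phi(n^*m)(x) = \Phi(n^*m)(x)\,b(x)^2 = \Phi(b n^* m b)(x)$, and by Cauchy--Schwarz for the conditional expectation (or simply $\|\Phi(bn^*mb)\| \le \|bn^*\|\,\|mb\| \le \|bn^*\|\,\|m\|$) together with $\|bn^*\|^2 = \|b n^*n b\| = \|b^2 n^*n\| < \epsilon$ (using that $b$ can be taken self-adjoint and commuting appropriately, or bounding $\|bn^*nb\|\le \|b\|\|b n^*n\|$), we get $|\Phi(n^*m)(x)| \le \sqrt{\epsilon}\,\|m\|$. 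Letting $\epsilon \to 0$ gives $\Phi(n^*m)(x) = 0$. (If instead $x \notin \mathrm{dom}(m)$, run the same argument on the right factor, or note $\Phi(n^*m)(x) = \overline{\Phi(m^*n)(x)}$.)

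For case \ref{item:in-domain+alph-uneq=Phi-zero}, the idea is that $\alpha_n(x) \neq \alpha_m(x)$ lets us separate the two images by disjoint open sets. Pick $b \in B$ with $b(\alpha_n(x)) = 1$ but $b(\alpha_m(x)) = 0$, and also a $b_0 \in B$ with $b_0(x)=1$. Using Lemma \ref{lem:stronger-def-for-alpha} (or \eqref{eq:defining-eq-for-alpha} directly), $n^* (b' n) = (b'\circ \alpha_n)\, n^*n$ for suitable $b'$, and more to the point $b n = ?$; concretely I would write $\Phi(n^*m) = \Phi(n^* m)$ and insert $b$: since $m b'' = (b''\circ \alpha_{m^*}) m$ by Lemma \ref{lem:stronger-def-for-alpha}, choosing $b''$ supported near $x$ lets us replace $m$ by $m$ times a function that, pulled back by $\alpha_{m^*}$, is controlled near $\alpha_m(x)$. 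The cleanest route: compute $b_0\, \Phi(n^*m)\, b_0 = \Phi(b_0 n^* m b_0) = \Phi\big((n b_0')^* (m b_0')\big)$ where $b_0' $ agrees with a local square root of $b_0$; then $n b_0' = (b_0'\circ\alpha_{n^*}) n$ and $m b_0' = (b_0'\circ\alpha_{m^*}) m$, and multiplying by the function $b$ above — which equals $1$ at $\alpha_n(x)$ and $0$ at $\alpha_m(x)$ — kills one side while preserving the other at $x$, forcing $\Phi(n^*m)(x)$ to equal both $0$ and itself times $1$, hence $0$. I expect the main obstacle to be bookkeeping the partially-defined functions $b\circ\alpha_n$ and making sure all products land in $B$; this is exactly what the paragraph following Proposition \ref{prop:weyl-picture} (and Lemma \ref{lem:stronger-def-for-alpha}) was set up to handle, so I would lean on those conventions rather than re-deriving them.
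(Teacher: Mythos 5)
Your Case (1) argument is, up to which of the two factors you cut off, exactly the paper's: a $[0,1]$-valued bump at $x$ supported where $m^*m$ (or $n^*n$) is below $\epsilon$, the $B$-bimodule property and contractivity of $\Phi$, and the $C^*$-identity; this part is correct. For Case (2) you have the right core idea --- use the covariance relation to convert $\alpha_n(x)\neq\alpha_m(x)$ into an identity of functions in $B$ forcing $\Phi(n^*m)(x)=0$ --- but you execute it on the codomain side, separating $\alpha_n(x)$ from $\alpha_m(x)$, whereas the paper expands $b\, n^*n\, n^*m\, m^*m$ and separates $x$ from $\alpha_n^{-1}(\alpha_m(x))$ in the domain, treating the sub-case $\alpha_m(x)\notin\mathrm{dom}(n^*)$ separately. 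Your ``cleanest route'' is sound and in fact tidier than you make it: take $b_0'$ real with $b_0'(x)=1$, supported in $\mathrm{dom}(n)\cap\mathrm{dom}(m)$ and with support small enough that $\alpha_n(\mathrm{supp}\, b_0')$ and $\alpha_m(\mathrm{supp}\, b_0')$ are disjoint (continuity of $\alpha_n,\alpha_m$ plus Hausdorffness); then Lemma~\ref{lem:stronger-def-for-alpha} gives $b_0'\, n^*m\, b_0' = n^*\,\overline{(b_0'\circ\alpha_{n^*})}\,(b_0'\circ\alpha_{m^*})\, m = 0$ because the two pullbacks have disjoint supports, whence $\Phi(n^*m)(x)=b_0'(x)^2\,\Phi(n^*m)(x)=0$ with no auxiliary $b$ and no sub-cases. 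The genuine pitfall is in your first variant, inserting $b$ between $n^*$ and $m$: to move $b$ across both normalizers you need $b$ to vanish outside $\mathrm{dom}(n^*)\cap\mathrm{dom}(m^*)$ while $b(\alpha_n(x))=1$, which is impossible when $\alpha_n(x)\notin\mathrm{dom}(m^*)$; that sub-case needs its own argument (e.g.\ Kadison--Schwarz together with Equation~\eqref{eq:defining-eq-for-alpha}: $\lvert\Phi(n^*m)(x)\rvert^2\le\Phi(n^*mm^*n)(x)=mm^*(\alpha_n(x))\, n^*n(x)=0$). Finally, prescribing $b$ only at the two points $\alpha_n(x)$ and $\alpha_m(x)$ is not enough for your last step; you need $b\equiv 1$ and $b\equiv 0$ on neighbourhoods of those points and must then shrink the support of $b_0'$ to fit inside their preimages.
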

    
\begin{proof} 
First assume that $x$ is not in the domain of $m$, say, so that $m^* m (x) = 0$. 
Fix $\epsilon>0.$ Continuity of $m^*m$ implies that there exists a neighborhood $U$ of $x$ such that 
\[
    \sup \{ m^*m(y) : y \in U\} < \epsilon.
\]
Let $b\in B$ be a $[0,1]$-valued function such that $b(x)=1$ and $b$ vanishes off of $U$. Then
\begin{align*} 
    \norm{m b}^2
    &
    = 
    \norm{(mb)^* m b}
    =
    \sup\{ m^*m (y) b^2(y): y \in \widehat B\}\\
    &\leq
    \sup \{ m^*m(y): y \in \widehat B \text{ such that } b(y)\neq 0\}\\
    &\leq
    \sup \{ m^*m(y): y\in U\}
    <
    \epsilon
    .
\end{align*}
Since $b(x) =1$, the $B$-linearity of the conditional expectation $\Phi$, and the fact that $\Phi$ is norm-decreasing, now imply that 
\begin{align*}
    \Phi(n^*m)(x)&  = \Phi(n^*m)(x) b(x)= \Phi(n^*m b)(x) \leq \norm{ \Phi(n^*m b)} \leq\norm{n^* m b} \\
    & \leq \norm{n} \norm{m b} \leq \norm{ n } \sqrt{\epsilon}.
\end{align*}
As the left-hand side does not depend on $\epsilon$ and $\epsilon$ was arbitrary, we conclude $\Phi(n^*m)(x) = 0$ as desired.

Now assume that $x \in \mathrm{dom}(n) \cap \mathrm{dom}(m)$. Multiplying the equation \[ n n^* (b\circ \alpha_{n^*})=n b n^*\]
for $b\in B$ 
by $n^*$ on the left yields the following equation in $A$:

\begin{equation}\label{eq:commutation}
	n^* [n n^* \cdot (b\circ \alpha_{n^*})]
	=
	n^* n b n^*
	=
	b (n^* n ) n^* 
	,
\end{equation}
where the last equation follows from the fact that $n^* n \in B$, so that it commutes with $b$. 
Similarly, 
\begin{equation*}
    b n^* n n^*  m m^* m
    =
    n^* [n n^*  \cdot  (b\circ \alpha_{n}\inv)] m m^* m 
    =
    n^* m \bigl[ m^* m  \cdot [n n^* \cdot (b\circ \alpha_{n}\inv)]\circ \alpha_m  \bigr]
    ,
\end{equation*}
where we obtain the final equality by applying Equation \eqref{eq:commutation} with $n$ replaced by $m^*$ and $b$ replaced by $[nn^* \cdot (b \circ \alpha_{n^*})].$
In particular, $B$-linearity of $\Phi$ yields
\begin{align}   
    b \cdot n^* n \cdot \Phi(n^*  m) \cdot m^* m
    &=
    \Phi(b n^* n n^*  m  m^* m)
    \notag
    \\
    &=
    \Phi(n^* m \bigl[ m^* m  \cdot [n n^* \cdot (b\circ \alpha_{n}\inv)]\circ \alpha_m  \bigr])
    \notag
    \\
    &=
    \Phi (n^* m) \cdot \bigl[ m^* m  \cdot [n n^* \cdot (b\circ \alpha_{n}\inv)]\circ \alpha_m  \bigr]\label{eq:fct-last-line},
\end{align}
and this is an equation of globally defined continuous functions on $\widehat{B}$.
Since $n^*n (x) > 0$ and $m^*m (x) > 0$ by assumption, then if $b\in B$ is any element with $b(x)\neq 0$, then $\Phi(n^*  m) (x) \neq 0$ if and only if the function in Equation~\eqref{eq:fct-last-line} is non-zero when evaluated at $x$,
which implies
\[
   [n n^* \cdot (b\circ \alpha_{n}\inv)] \circ \alpha_m (x)
   \neq 0
   .
\]
This can happen only if $\alpha_m (x)\in \mathrm{dom} (n^*)$ and $b \bigl(\alpha_{n}\inv (\alpha_m (x))\bigr) \neq 0.$

In particular, if $\alpha_m (x) \notin \mathrm{dom}(n^*)$, then it follows that $\Phi(n^*m)(x)=0$ as claimed, by taking any $b\in B$ with $b(x)\neq 0$.
On the other hand, if $\alpha_m (x) \in \mathrm{dom}(n^*)$, so that $\alpha_n\inv (\alpha_m (x))$ makes sense and is by assumption not equal to $x$, then we can find a function $b\in B$ such that $b(x) \neq 0$ and $b(\alpha_n\inv (\alpha_m (x)))=0$. Again, the previous argument shows that we must have $\Phi (n^*m)(x)=0$, as claimed.
\end{proof}

The next result is a straightforward consequence of Lemma \ref{lem:nbhd for alpha_i}.

\begin{cor}\label{lem:first}
    Let $n, m \in N(B)$. If either
    \begin{enumerate}[label=(\arabic*), font=\upshape]
        \item\label{item:not-in-domain=} $x \notin \mathrm{dom}(n) \cap \mathrm{dom}(m)$, or
        \item\label{item:in-domain=} $x \in \mathrm{dom}(n) \cap \mathrm{dom}(m)$ satisfies $\alpha_n(x) \not= \alpha_m(x)$,
    \end{enumerate}
     then $\| n-m\|^2 \geq n^*n(x)$.
\end{cor}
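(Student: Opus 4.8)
The plan is to deduce the inequality directly from Lemma~\ref{lem:nbhd for alpha_i}, together with the elementary fact that the conditional expectation $\Phi$ is a positive, norm-decreasing $B$-bimodule map. First I would observe that the hypotheses~\ref{item:not-in-domain=} and~\ref{item:in-domain=} are symmetric in $n$ and $m$: the condition $x \notin \mathrm{dom}(n)\cap\mathrm{dom}(m)$ is unchanged under swapping $n$ and $m$, and $\alpha_n(x)\neq\alpha_m(x)$ iff $\alpha_m(x)\neq\alpha_n(x)$. Hence Lemma~\ref{lem:nbhd for alpha_i}, applied once to the pair $(n,m)$ and once to the pair $(m,n)$, gives
\[ \Phi(n^*m)(x) = 0 = \Phi(m^*n)(x). \]

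Next I would expand $(n-m)^*(n-m) = n^*n - n^*m - m^*n + m^*m$ and apply $\Phi$. Since $n^*n, m^*m \in B$, the conditional expectation fixes them, so
\[ \Phi\bigl((n-m)^*(n-m)\bigr) = n^*n - \Phi(n^*m) - \Phi(m^*n) + m^*m \]
as an equation of elements of $B \cong C_0(\widehat B)$. Evaluating this equation at the point $x$ and using the previous display yields $\Phi\bigl((n-m)^*(n-m)\bigr)(x) = n^*n(x) + m^*m(x)$.

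Finally I would invoke positivity and contractivity of $\Phi$: the element $\Phi\bigl((n-m)^*(n-m)\bigr)$ is a positive element of $B$, hence in the Gelfand picture a nonnegative continuous function whose norm is its supremum over $\widehat B$, which is at least its value at $x$; and $\|\Phi\bigl((n-m)^*(n-m)\bigr)\| \leq \|(n-m)^*(n-m)\| = \|n-m\|^2$. Combining these gives
\[ \|n-m\|^2 \geq \Phi\bigl((n-m)^*(n-m)\bigr)(x) = n^*n(x) + m^*m(x) \geq n^*n(x), \]
since $m^*m(x) \geq 0$. As the corollary is billed as a straightforward consequence of Lemma~\ref{lem:nbhd for alpha_i}, I do not anticipate a genuine obstacle; the only point requiring a moment's care is the symmetry observation that lets both cross terms $\Phi(n^*m)(x)$ and $\Phi(m^*n)(x)$ vanish, so that they drop out of the display when it is evaluated at $x$.
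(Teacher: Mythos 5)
Your proof is correct and is precisely the ``straightforward consequence'' the paper has in mind: the paper prints no proof for Corollary~\ref{lem:first}, and the intended argument is exactly your expansion of $\Phi\bigl((n-m)^*(n-m)\bigr)$, the vanishing of both cross terms at $x$ via Lemma~\ref{lem:nbhd for alpha_i} (the symmetry point you flag is the right one to check), and contractivity plus positivity of $\Phi$. In fact you obtain the slightly stronger bound $\|n-m\|^{2}\geq n^*n(x)+m^*m(x)$, which only helps.
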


The following lemma is readily checked using the defining equation for $\alpha_m$,  Equation~\eqref{eq:defining-eq-for-alpha}. However, we warn the reader that the analogous statement in the Weyl twist only holds if    $\lambda > 0$.  See Lemma \ref{lem:general-Sigma-description} below.
\begin{lemma}\label{lem:Annas-former-footnote-lemma}
    For any $\lambda \in \mathbb{C}\setminus\{0\}$ and $m\in N(B)$, we have $\alpha_m = \alpha_{\lambda m}$. In particular, for any $z \in \mathrm{dom} (m),$
\begin{equation}\label{eq:scalars-dont-matter-wrt-sim}
    [\alpha_m(z), m, z] = [\alpha_{\lambda m}(z), \lambda m, z]
    .
\end{equation}
\end{lemma}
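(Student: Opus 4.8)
The plan is to exploit the fact that, for $n\in N(B)$, the partial homeomorphism $\alpha_n$ is \emph{uniquely} determined by three pieces of data: its domain $\mathrm{dom}(n)$, its codomain $\mathrm{dom}(n^*)$, and the identity $n^*bn=(b\circ\alpha_n)\cdot n^*n$ for all $b\in B$. So it suffices to check that $\alpha_m$ itself already satisfies the three requirements characterizing $\alpha_{\lambda m}$, and then invoke uniqueness. First I would record that $\lambda m\in N(B)$: for every $b\in B$ we have $(\lambda m)b(\lambda m)^*=|\lambda|^2\,mbm^*\in B$ and $(\lambda m)^*b(\lambda m)=|\lambda|^2\,m^*bm\in B$. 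Next, since $(\lambda m)^*(\lambda m)=|\lambda|^2\,m^*m$ and $|\lambda|^2>0$, this function is strictly positive at exactly the same points of $\widehat B$ as $m^*m$; hence $\mathrm{dom}(\lambda m)=\mathrm{dom}(m)$, and symmetrically $\mathrm{dom}((\lambda m)^*)=\mathrm{dom}(m^*)$.

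Then for any $b\in B$ one computes
\[
    (\lambda m)^*\,b\,(\lambda m)
    = |\lambda|^2\,m^*bm
    = |\lambda|^2\,(b\circ\alpha_m)\cdot m^*m
    = (b\circ\alpha_m)\cdot (\lambda m)^*(\lambda m),
\]
so $\alpha_m$ is a partial homeomorphism with domain $\mathrm{dom}(\lambda m)$ and codomain $\mathrm{dom}((\lambda m)^*)$ satisfying the defining equation \eqref{eq:defining-eq-for-alpha} for $\alpha_{\lambda m}$; by uniqueness, $\alpha_{\lambda m}=\alpha_m$. For the ``in particular'' clause, fix $z\in\mathrm{dom}(m)=\mathrm{dom}(\lambda m)$. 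Since $\mathrm{dom}(m)$ is open, contains $z$, and $\alpha_m$ and $\alpha_{\lambda m}$ agree on all of it, the triples $(\alpha_m(z),m,z)$ and $(\alpha_{\lambda m}(z),\lambda m,z)$ are $\sim$-equivalent (take $U=\mathrm{dom}(m)$ in the defining equivalence relation for the Weyl groupoid), whence $[\alpha_m(z),m,z]=[\alpha_{\lambda m}(z),\lambda m,z]$.

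There is no genuine obstacle here: the entire argument rests on the cancellation of the positive scalar $|\lambda|^2$ from both sides of the defining equation, which is precisely why the conclusion is insensitive to the phase of $\lambda$. By contrast, this cancellation is unavailable in the Weyl twist $\Sigma_{(A,B)}$, whose equivalence relation $\approx$ is phrased through equalities $nb=mb'$ with $b,b'$ positive near $x$; rewriting $(\lambda m)b=m(\lambda b)$ then forces $\lambda b$ to be positive near $x$, and hence $\lambda>0$ --- exactly the subtlety flagged in the statement and taken up in Lemma~\ref{lem:general-Sigma-description}.
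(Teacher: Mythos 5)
Your proof is correct and is exactly the argument the paper has in mind: the paper simply declares the lemma ``readily checked using the defining equation for $\alpha_m$,'' and your cancellation of the positive scalar $\abs{\lambda}^2$ from both sides of Equation~\eqref{eq:defining-eq-for-alpha}, together with the uniqueness of $\alpha_{\lambda m}$ and the openness of $\mathrm{dom}(m)$, fills in precisely those details. Your closing remark about why the cancellation fails for the twist unless $\lambda>0$ also matches the warning the paper attaches to this lemma.
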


\begin{lemma} 
\label{lem:alpha_i}
    Suppose $n \in N(B)$ and $n=\displaystyle \sum_{i \in F}n_i$ with $F$ finite and each $n_i \in N(B)$. For every $x\in \mathrm{dom}(n)$, there is an $i\in F$ and an open set $U\subseteq \mathrm{dom}(n)\cap \mathrm{dom}(n_i)$ containing $x$ such that $\alpha_{n_i}\vert_U \equiv \alpha_n \vert_U$.
\end{lemma}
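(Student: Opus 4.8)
The plan is to use the linearity of the conditional expectation $\Phi$ together with Lemma \ref{lem:nbhd for alpha_i} to single out the correct index $i$, and then to upgrade pointwise agreement of the partial homeomorphisms to agreement on an open neighbourhood by invoking the continuity of $\Phi(n^* n_i)$.

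First I would observe that since $n^*n \in B$ and $\Phi$ restricts to the identity on $B$, linearity of $\Phi$ gives the identity
\[
    n^*n
    =
    \Phi(n^*n)
    =
    \Phi\Bigl( \sum_{i\in F} n^* n_i \Bigr)
    =
    \sum_{i\in F} \Phi(n^* n_i)
\]
of functions in $B \cong C_0(\widehat B)$. Evaluating at the fixed point $x \in \mathrm{dom}(n)$, for which $n^*n(x) > 0$ by the definition of the domain, yields $\sum_{i\in F} \Phi(n^* n_i)(x) = n^*n(x) > 0$. Since $F$ is finite, there must be at least one index $i \in F$ with $\Phi(n^* n_i)(x) \neq 0$; fix such an $i$.

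Next I would apply the contrapositive of Lemma \ref{lem:nbhd for alpha_i} with $m = n_i$: whenever $\Phi(n^* n_i)(y) \neq 0$, necessarily $y \in \mathrm{dom}(n) \cap \mathrm{dom}(n_i)$ and $\alpha_n(y) = \alpha_{n_i}(y)$. Because $\Phi(n^* n_i)$ is a continuous function on $\widehat B$ which is nonzero at $x$, there is an open neighbourhood $U$ of $x$ on which $\Phi(n^* n_i)$ vanishes nowhere. Running the contrapositive of Lemma \ref{lem:nbhd for alpha_i} at every point $y \in U$ then shows at once that $U \subseteq \mathrm{dom}(n) \cap \mathrm{dom}(n_i)$ and that $\alpha_{n_i}\vert_U \equiv \alpha_n\vert_U$, which is precisely the assertion.

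I do not expect a genuine obstacle here. The only point demanding care is that the lemma requires agreement of $\alpha_n$ and $\alpha_{n_i}$ on a whole neighbourhood of $x$, not merely at $x$ itself; but this is handed to us for free, since $\Phi(n^* n_i)$ is continuous and Lemma \ref{lem:nbhd for alpha_i} already delivers both domain membership and agreement of the partial homeomorphisms at \emph{every} point where $\Phi(n^* n_i)$ is nonzero.
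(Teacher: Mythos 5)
Your proof is correct and uses essentially the same ingredients as the paper's: the identity $n^*n = \Phi(n^*n) = \sum_{i\in F}\Phi(n^*n_i)$ from $B$-linearity of $\Phi$, Lemma \ref{lem:nbhd for alpha_i} to tie nonvanishing of $\Phi(n^*n_i)$ to agreement of the partial homeomorphisms, and continuity of $\Phi(n^*n_i)$. The paper organizes this as a proof by contradiction with nets $x_{U,i}\to x$ forcing $\Phi(n^*n_i)(x)=0$ for every $i$, whereas you run the contrapositive directly and use the openness of the nonvanishing set of $\Phi(n^*n_i)$; the logical content is the same, and your direct version is, if anything, a bit cleaner.
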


\begin{proof}
Suppose, seeking contradiction, that there exists $x \in \mathrm{dom}(n)$ such that for every $i$ in $I:= \{i \in F \text{ such that } x\in \mathrm{dom} (n_i)\}$ and every neighborhood $U$ of $x$ in $\mathrm{dom}(n)\cap \mathrm{dom}(n_i)$, there exists an element $x_{U,i}$ such that $\alpha_{n_i} (x_{U,i}) \neq \alpha_{n} (x_{U,i})$. 

By Lemma~\ref{lem:nbhd for alpha_i}, we have $\Phi(n^* n_i) (x_{U,i}) = 0$. Since, for any fixed $i\in I$, the net $(x_{U,i})_U$ converges to $x$, we conclude from the continuity of $\Phi(n^* n_i) \in C_0(\widehat B)$ that $\Phi(n^* n_i) (x)=0$ also.

On the other hand, we have $x\notin \mathrm{dom} (n_j)$ for every $j\in F \setminus I$ by definition, so Lemma~\ref{lem:nbhd for alpha_i}\ref{item:not-in-domain=Phi-zero} implies $\Phi (n^* n_j) (x)=0$ also.

All in all, we have proved $\Phi (n^* n_i) (x)=0$ for all $i\in F$, so that
\[
    n^*n (x)
    =
    \Phi(n^*n) (x)
    =
    \sum_{i\in F} \Phi (n^* n_i) (x)
    =
    0,
\]
which contradicts the assumption that $x \in \mathrm{dom}(n)$.
\end{proof}

\begin{proof}[Proof of Proposition~\ref{prop:weyl-picture}]\label{pf:prop:weyl-picture}
    Let $n \in N(B)$ and suppose first that $n = \sum_{i\in F} n_i$ for a finite $F$ and $n_i = \lambda_i m_i$ with $m_i \in N$ and $\lambda_i \in \mathbb{C}\setminus \{0\}$. Then for any $ x \in \mathrm{dom}(n)$, we know by Lemma \ref{lem:alpha_i} that there exists an $i\in F$ and neighborhood of $ x $ on which $\alpha_{n}$ and $\alpha_{n_i}$ agree.

Since $\alpha_{\lambda_i m_i} = \alpha_{m_i}$, we conclude that \[
  [\alpha_n( x ), n, x ]
  =
  [\alpha_{m_i}( x ), m_i, x ]
  .
\]

Next, take an arbitrary element of the Weyl groupoid, say $[\alpha_n( x ), n, x ]$ for $n \in N(B)$ where $x \in \widehat B$ is such that $n^*n(x) >0$. Write $n = \lim_{q\to \infty} n_q$ where each $n_q$ is a finite linear combination of elements from $N$. 
Fix a compact neighborhood $K$ of $x$ such that $n^*n(y) > 0$ for $y\in K$, and write $\epsilon = \inf \{ | n^*n(y)|^{1/2}: y \in K\}$.  Observe that $\epsilon > 0$ since $K$ is compact. Let $Q$ be large enough so that $\| n - n_q\| < \epsilon$ for all $q \geq Q$. In particular, for any $y\in K^o$, we have $\norm{ n - n_q}^2 < n^*n (y)$. Therefore, for these $y$, we must have that $y\in \mathrm{dom}(n_q)\cap \mathrm{dom}(n)$ and $\alpha_n (y) = \alpha_{n_q} (y),$ by Corollary~\ref{lem:first}.

In other words, $\alpha_n$ and $\alpha_{n_q}$ agree on $K^o$, so $[\alpha_n(x), n, x] = [\alpha_{n_q}(x), n_q, x ]$.  Since $[\alpha_{n_q}(x), n_q, x]=[\alpha_m(x), m, x]$ for some $m \in N$ by the first part of the proof, we are done.
\end{proof}
\section{Computing the Weyl groupoid and twist in the group setting} \label{sec:anna}
Let $G$ be a countable discrete group, 
and let $c$ be a 2-cocycle on $G$. Suppose $S $ is maximal among abelian subgroups of $G=\Iso (G) $ on which $c$ is symmetric, and assume further that $S$ is 
normal and immediately centralizing. By Theorem~\ref{thm:main}, the pair $(A,B):=( C^{*}_{r}(G, c) , C^{*}_{r}(S, c))$ is Cartan.

In this section, we describe the relationship between the Weyl groupoid $\G_{(A,B)}$ and Weyl twist $\Sigma_{(A, B)}$ associated to $(A, B)$ via Renault's construction, and our original data $(G, S, c)$.  See Theorem \ref{lem:class-of-g-determines-equality} and Theorem \ref{thm:general-untwist-case} below.

For any $g \in G$, the Dirac-delta function $\delta_{g}$ on $G$ is in $N(B)$ since $S$ is normal: indeed, we have
\begin{equation}\label{eq:delg-dels-delg-star}
	\delta_{g} \delta_{s} \delta_{g}^{*}
	=
	\overline{c(g,g \inv)}\ c(s, g\inv) \ c(g, sg\inv) \ 
	\delta_{gsg\inv}
	\in C_{c}(S),
\end{equation}
and so we conclude that $\delta_{g} f \delta_{g}^{*} \in C_{r}^{*}(\S,c)$ for all $f\in C_{r}^{*}(\S,c)$.

As such elements densely span $A$, Proposition \ref{prop:weyl-picture} shows that they entirely determine the family of partial homeomorphisms $\{\alpha_{n} : n\in N(B)\}$ in the sense that every element of the Weyl groupoid $\G:=\G_{(A,B)}$ is of the form $[\alpha_{\delta_{g}}(  x   ), \delta_{g},   x   ]$ for some $g\in G$ and $  x   \in \widehat{B}$; we will therefore abuse notation from now on and write $\alpha_{g}$ instead of $\alpha_{\delta_{g}}$ whenever this is unambiguous. Since $\delta_{g}^{*} \delta_{g} = \delta_e$, we have $\textrm{dom}(\delta_{g}) = \widehat{B},$ i.e.\ $\alpha_{g}$ is globally defined, and because of Equation~\eqref{eq:scalars-dont-matter-wrt-sim}, the groupoid composition can be rephrased as:
\[
    [\alpha_{h}(\alpha_{g}(  x   )), \delta_{h}, \alpha_{g}(  x   )]
    \cdot
    [\alpha_{g}(  x   ), \delta_{g},   x   ]
    =
    [\alpha_{hg}(  x   ), \delta_{hg},   x   ].
\]
In particular,
\begin{equation}
    \label{eq:source-and-range-of-Weyl}
    s([\alpha_{g}(  x   ), \delta_{g},   x   ]) =   x   
    \text{ and }
    r([\alpha_{g}(  x   ), \delta_{g},   x   ]) = \alpha_{g}(  x   ).
\end{equation}

In what follows, we will use the usual notation $G/S$ for the quotient of $G$ by the normal subgroup $S$.  However, to simplify certain computations, we will usually think of $[g] \in G/S$ as denoting the {\em left} coset of $S$ with respect to $g$, which equals the right coset because $S$ is normal.

\begin{lemma}\label{lem:alph-only-depends-on-class}
	If $[g]=[h]$ in $\under{S}{G}$, then  $\alpha_{g}\equiv \alpha_{h}$.
\end{lemma}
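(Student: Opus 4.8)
The plan is to reduce everything to two facts already in hand: the composition law $\alpha_{nm}=\alpha_n\circ\alpha_m$ on the appropriate domain (recalled in Section~\ref{sec:background}), and the invariance of $\alpha$ under nonzero scalars (Lemma~\ref{lem:Annas-former-footnote-lemma}). First I would unpack the hypothesis. Since $S$ is normal, $[g]=[h]$ in $G/S$ means $gS=hS$, so we may write $h=gs$ for some $s\in S$. Computing the convolution product in $C_c(G,c)$ (the only surviving term has $\eta=s\inv$) gives $\delta_g\ast\delta_s = c(g,s)\,\delta_{gs}=c(g,s)\,\delta_h$, hence $\delta_h=\overline{c(g,s)}\,(\delta_g\ast\delta_s)$ with $\overline{c(g,s)}\in\mathbb T\subseteq\mathbb C\setminus\{0\}$.

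Next I would identify $\alpha_{\delta_s}$. Because $s\in S$, the function $\delta_s$ lies in $B=C^*_r(S,c)$ and is a unitary there: $\delta_s^*\ast\delta_s=\delta_s\ast\delta_s^*=\delta_e$, the unit of the (unital, since $G$ is discrete) algebra $A=C^*_r(G,c)$. In particular $\delta_s\in N(B)$ and $\mathrm{dom}(\delta_s)=\widehat B$. Feeding $\delta_s$ into the defining equation \eqref{eq:defining-eq-for-alpha} and using that $B$ is abelian, for every $b\in B$ we get $\delta_s^*\ast b\ast\delta_s=\delta_s^*\ast\delta_s\ast b=b$, while the right-hand side of \eqref{eq:defining-eq-for-alpha} equals $(b\circ\alpha_{\delta_s})\cdot\delta_s^*\ast\delta_s = b\circ\alpha_{\delta_s}$. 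Thus $b\circ\alpha_{\delta_s}=b$ for all $b\in B$, which forces $\alpha_{\delta_s}=\mathrm{id}_{\widehat B}$ (alternatively one could invoke Lemma~\ref{lem:alpha-n=id-implies-n-in-B} or Lemma~\ref{lem:alpha_i}).

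Finally I would assemble the pieces. Applying the composition law with $n=\delta_g$ and $m=\delta_s$ (both having domain $\widehat B$) yields $\mathrm{dom}(\delta_g\ast\delta_s)=\widehat B$ and $\alpha_{\delta_g\ast\delta_s}=\alpha_{\delta_g}\circ\alpha_{\delta_s}=\alpha_{\delta_g}\circ\mathrm{id}=\alpha_{\delta_g}$. Since $\delta_h=\overline{c(g,s)}\,(\delta_g\ast\delta_s)$ is a nonzero scalar multiple of $\delta_g\ast\delta_s$, Lemma~\ref{lem:Annas-former-footnote-lemma} gives $\alpha_h=\alpha_{\delta_h}=\alpha_{\delta_g\ast\delta_s}=\alpha_{\delta_g}=\alpha_g$, as claimed. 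I do not expect a genuine obstacle here; the only point needing a moment's care is confirming that $\delta_s$ is a unitary of $B$ with full domain $\widehat B$, so that all the domain bookkeeping in the composition law is trivial — but that is immediate from $\delta_s^*\ast\delta_s=\delta_e$.
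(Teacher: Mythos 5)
Your proof is correct and rests on the same key observation as the paper's: that $\delta_s$ is a unitary element of the abelian algebra $B$, so conjugation by it fixes $B$ pointwise and hence $\alpha_{\delta_s}=\mathrm{id}$. The paper plugs this directly into the uniqueness characterization \eqref{eq:defining-eq-for-alpha} (writing $\delta_{sg}^{*} b \delta_{sg} = \delta_{g}^{*} (\delta_{s}^{*} b\delta_{s} ) \delta_{g} = \delta_g^* b \delta_g$), whereas you route the identical computation through the composition law $\alpha_{nm}=\alpha_n\circ\alpha_m$ together with Lemma~\ref{lem:Annas-former-footnote-lemma}; both arguments are valid and essentially the same.
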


Consequently, we will sometimes write $\alpha_{[g]}$ for $\alpha_{g}$.

\begin{proof}
	Recall that $\alpha_{g}$ is uniquely determined as satisfying Equation~\eqref{eq:defining-eq-for-alpha} for $n=\delta_{g}$. To show that $\alpha_{g}=\alpha_{sg}$ for every $s\in S$, it thus suffices to check that
	$
		\delta_{sg}^{*} b \delta_{sg}
=
		\delta_{g}^{*} b \delta_{g}
	$ for all $b \in B$. 
	As $\delta_{sg} = \overline{c(s,g)}\delta_{s} \delta_{g},$
	the left-hand side can be rewritten as $\delta_{g}^{*} (\delta_{s}^{*} b\delta_{s} ) \delta_{g}$. The fact that $B$ is commutative and $s\in S$ implies $\delta_{s}^{*} b\delta_{s} = b \delta_{s}^{*}\delta_{s} = b$, so the left-hand side of the equation indeed equals the right-hand side. 
\end{proof}

Note that $[g]\mapsto \alpha_{[g]}$ defines an action of $\under{S}{G}$ on $\widehat{B}$ by homeomorphisms since 
$
    \alpha_{[g]}\circ\alpha_{[h]} 
    =\alpha_{[gh]}
$
by Lemma~\ref{lem:Annas-former-footnote-lemma}.
Since $G$ 
is discrete, the transformation groupoid $\mathcal{K}:=  (\under{S}{G}) \tensor[_\alpha]{\ltimes}{} \widehat{B}$ 
is {\'e}tale (see \cite[Ex.~2.4.5]{AidansNotes}). It is furthermore locally compact Hausdorff because   $\widehat{B}$ is locally compact Hausdorff.
Lastly, it is second countable because $S$ is countable. The following Theorem shows that under mild hypotheses, $\K$ is the Weyl groupoid $\G_{(A,B)}.$

Recall that $\alpha$ is called \emph{topologically free} if for every finite set $F \subseteq \left(\under{S}{G} \right)\setminus \{e\}$, the set $\{x \in \widehat{B} \, | \, \forall\ g \in F,\ \alpha_g(x) \neq x\}$ is dense in $\widehat{B}$.
\begin{thm}\label{lem:class-of-g-determines-equality}
    If $\alpha$ is topologically free, then the map
    \[
      \varphi:  \mathcal{K} \to \G_{(A,B)} , 
        \quad
       \varphi ([g],x)  
       = [\alpha_{g}(  x   ), \delta_{g},   x   ] ,
    \]
    is an isomorphism of {topological} groupoids.
\end{thm}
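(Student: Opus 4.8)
The plan is to show that $\varphi$ is a well-defined, bijective, continuous, open groupoid homomorphism. Well-definedness requires checking that if $[g]=[h]$ in $\under{S}{G}$ then $[\alpha_g(x),\delta_g,x]=[\alpha_h(x),\delta_h,x]$; this is exactly Lemma~\ref{lem:alph-only-depends-on-class}, which tells us $\alpha_g\equiv\alpha_h$, so the triples are $\sim$-equivalent (the witnessing open set being all of $\widehat B$). That $\varphi$ is surjective is the content of the discussion following Proposition~\ref{prop:weyl-picture}: every element of $\G_{(A,B)}$ has the form $[\alpha_{\delta_g}(x),\delta_g,x]=[\alpha_g(x),\delta_g,x]$ since the $\delta_g$ densely span $A$ and lie in $N(B)$. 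That $\varphi$ is a homomorphism (and respects source, range, inversion) follows from the rephrased composition formula $[\alpha_h(\alpha_g(x)),\delta_h,\alpha_g(x)]\cdot[\alpha_g(x),\delta_g,x]=[\alpha_{hg}(x),\delta_{hg},x]$ and Equation~\eqref{eq:source-and-range-of-Weyl}, matched against the transformation-groupoid operations $([h],\alpha_g(x))([g],x)=([hg],x)$.

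The main obstacle is injectivity, and this is where topological freeness of $\alpha$ enters. Suppose $\varphi([g],x)=\varphi([h],y)$. Comparing sources via Equation~\eqref{eq:source-and-range-of-Weyl} forces $x=y$ immediately. The definition of $\sim$ then gives an open neighborhood $U\ni x$ on which $\alpha_g|_U=\alpha_h|_U$, equivalently $\alpha_{g^{-1}h}|_U=\alpha_{[e]}|_U=\mathrm{id}_U$. I must conclude $[g]=[h]$, i.e.\ $[g^{-1}h]=e$ in $\under{S}{G}$. If $[g^{-1}h]\neq e$, then topological freeness applied to the singleton $F=\{[g^{-1}h]\}$ says $\{z\in\widehat B:\alpha_{g^{-1}h}(z)\neq z\}$ is dense, so it meets the open set $U$ — contradicting $\alpha_{g^{-1}h}|_U=\mathrm{id}_U$. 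Hence $[g]=[h]$, and $\varphi$ is injective.

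For the topological statements: a basic open set in $\mathcal K=(\under{S}{G})\ltimes\widehat B$ has the form $\{[g]\}\times V$ for $V\subseteq\widehat B$ open (since $\under{S}{G}$ is discrete), and $\varphi(\{[g]\}\times V)=\{[\alpha_g(x),\delta_g,x]:x\in V\}=\{[\alpha_g(x),\delta_g,x]:\alpha_g(x)\in\alpha_g(V),\ x\in V\}$, which is precisely a basic open set in $\G_{(A,B)}$ (taking $U=V$, the open set $\alpha_g(V)$, and $n=\delta_g$ in the definition of basic open sets given in Section~\ref{sec:background}); this shows $\varphi$ is open. Conversely, a basic open set of $\G_{(A,B)}$ is $\{[\alpha_n(x),n,x]:\alpha_n(x)\in V',x\in U'\}$ for $n\in N(B)$; its preimage under $\varphi$ is covered by sets of the form $\{[g]\}\times(U'\cap\alpha_g^{-1}(V')\cap W)$ where, by Lemma~\ref{lem:alpha_i} applied to $n$ written as a limit of finite sums of scalar multiples of $\delta_g$'s (exactly as in the proof of Proposition~\ref{prop:weyl-picture}), $W$ is the open set on which $\alpha_n$ agrees with $\alpha_g$ — so the preimage is open and $\varphi$ is continuous. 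Finally I note $\varphi$ is a continuous open bijective homomorphism, hence an isomorphism of topological groupoids, which completes the proof.
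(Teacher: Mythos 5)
Your proposal is correct and follows essentially the same route as the paper's proof: well-definedness via Lemma~\ref{lem:alph-only-depends-on-class}, surjectivity via Proposition~\ref{prop:weyl-picture}, the same composition computation, injectivity by applying topological freeness to $[g^{-1}h]$ on the neighborhood where $\alpha_g$ and $\alpha_h$ agree, and openness by mapping $\{[g]\}\times V$ onto a basic open set. The only (harmless) cosmetic difference is in the continuity step, where you pull back a general basic open set indexed by $n\in N(B)$ directly, while the paper instead observes that sets of the special form $\{[\alpha_g(y),\delta_g,y]:y\in U\}$ give a neighborhood basis whose preimages are manifestly open.
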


\begin{proof}
    To see that $\varphi$ is well-defined, note that Lemma \ref{lem:alph-only-depends-on-class} shows that $[g] = [h]$ implies $\alpha_{g} = \alpha_{h}$ on all of $\widehat{B}$. So, in particular, $[\alpha_{g}(  x   ), \delta_{g},   x   ] = [\alpha_{h}(  x   ), \delta_{h},   x   ] \in  \G_{(A,B)}$ for any $  x    \in \widehat{B}$ by definition of $\sim$ (see Equation~\eqref{def:sim}).
    
    If $[g] \neq [h]$, then for any neighborhood $U$ of $x\in\widehat{B}$, the set $\{y \in U \, | \, \alpha_{g\inv h}(y) \neq y\}$ is nonempty   since $g\inv h \neq e$ and since $\alpha$ is topologically free by assumption.     Therefore, $\alpha_{g} \neq \alpha_{h}$ on $U$, which implies $[\alpha_{g}(  x   ), \delta_{g},   x   ] \neq [\alpha_{h}(  x   ), \delta_{h},   x   ].$ In particular, the map $\varphi$ is injective. Note that $\varphi$ is surjective because every element of $\G_{(A,B)}$ has a representative of the form $(\alpha_g(x),\delta_g, x)$ by Proposition \ref{prop:weyl-picture}.

    To see that $\varphi$ is a groupoid homomorphism, we compute on the one hand,
    \[
        ([g],\alpha_{h}(y))\cdot ([h],y)
        =
        ([gh],y)
    \]
    and on the other hand
    \[
        [\alpha_{g}(  \alpha_{h}(y)  ), \delta_{g},   \alpha_{h}(y)   ] 
        \cdot
        [\alpha_{h}(  y   ), \delta_{h},   y   ] 
        =
        [\alpha_{gh}(y)  , \delta_{gh},   y   ] 
        .
    \]
    Thus, $\varphi$ is a groupoid isomorphism.

   To see that $\varphi$ 
   is a homeomorphism, recall (cf.~\cite[Section 3]{renault-cartan}) that a basic open set in $\G_{(A, B)}$ is of the form $\{ [\alpha_n(x), n, x]: \alpha_n(x) \in V, x \in U\}$ for $U, V \subseteq \widehat B$ open and $n \in N(B)$.
   Consequently, the fact that $\alpha_g$ is a globally defined homeomorphism implies that every point $[\alpha_g(x), \delta_g, x] \in \G_{(A,B)}$ has an open neighborhood $\mathcal O$ of the form $\mathcal O =\{ [\alpha_g(y), \delta_g, y] : y \in U\}$ for some open set $U \subseteq \widehat B$.  Observe that 
   \[  
        \varphi^{-1}(\mathcal O) =
        \{ ([g],y): y\in U\},
    \]
    which is open in $\K$ since $ \under{S}{G}$ has the discrete topology.  Thus, $\varphi$ is continuous.
   
   To see that $\varphi$ is open, observe that $\varphi$ takes any basic open set $\{[g]\} \times U$ 
   in $\K$ (where $U \subseteq \widehat B$ is open)  to the basic open set $\{ [\alpha_g(y), \delta_g, y ] : y \in U\}$ in $\G_{(A,B)}$. This completes the proof that $\varphi: \K \to \G_{(A,B)}$ is an isomorphism of topological groupoids.
\end{proof}

\begin{rmk}
    The assumption of topological freeness in Theorem~\ref{lem:class-of-g-determines-equality} was only needed to prove injectivity of the map $\varphi$.
\end{rmk}

Next we turn to the Weyl twist $\Sigma:=\Sigma_{(A,B)}$ associated to the Cartan pair $(A,B)=(C^{*}_{r}(G, c), C^{*}_{r}(S, c))$. We will show that the twist is given by a 2-cocycle. That is, it is isomorphic as a topological groupoid to $\mathcal{K}  \times_{\sigma} \mathbb{T}$ for $\sigma$ a 2-cocycle on $\mathcal{K}$.

In \cite[Lemma 4.16]{renault-cartan}, Renault gives a family of local trivializations for $\Sigma$, indexed by $\{ n \in N(B)\}$.  These are defined on the {\em open support} of $\alpha_{n}$,
\[ \text{supp}'(n) = \left\{ [\alpha_{n}(  y  ), n,   y  ] \in \G  :   y   \in \textup{dom}(n)\right\}.\]

The discussion preceding \cite[Lemma 4.16]{renault-cartan}
describes the trivialization of the $\mathbb{T}$-bundle $\Sigma$ over $\G$ as follows: for each normalizer $n \in N(B)$, the homeomorphism $\phi_{n}\colon  \Sigma|_{\supp'(n)} \to \textup{dom}(n) \times \mathbb{T}$ is given by $\phi_{n}\inv(  x   ,  \lambda ) =  \llbracket \alpha_{n}(  x   ),  \lambda  n,   x    \rrbracket$, where $ \lambda  \in \mathbb{T}$.\label{page:def:phi-g}

\begin{lemma}
    \label{lem:general-Sigma-description}
	Let $R(S)$ be any choice of coset representatives for $\under{S}{G}$. Then every element of $\Sigma$ can be represented by some
    $(\alpha_{g} (  x   ),  \lambda \delta_{g},   x   )$, where $g\in R(S)$ and where $ \lambda \in \mathbb{T}$ can be explicitly computed.
\end{lemma}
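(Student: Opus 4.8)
The plan is to leverage Proposition~\ref{prop:weyl-picture}, applied to the dense spanning set $N = \{\delta_g : g \in G\} \subseteq N(B)$, together with the explicit local trivializations $\phi_n$ of the twist recalled above. Fix an element $\llbracket \alpha_n(x), n, x \rrbracket \in \Sigma$ with $n \in N(B)$ and $x \in \mathrm{dom}(n)$. By Proposition~\ref{prop:weyl-picture}, its image in the Weyl groupoid $\G$ equals $[\alpha_{g}(x), \delta_g, x]$ for some $g \in G$; since the class of $g$ in $\under{S}{G}$ is what matters for $\alpha$ (Lemma~\ref{lem:alph-only-depends-on-class}), we may take $g \in R(S)$. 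Thus the underlying point of $\Sigma$ lies in $\supp'(\delta_g)$, and $\mathrm{dom}(\delta_g) = \widehat B$ since $\delta_g^*\delta_g = \delta_e$. Applying the trivialization $\phi_{\delta_g}$ gives $\phi_{\delta_g}(\llbracket \alpha_n(x), n, x \rrbracket) = (x, \lambda)$ for a unique $\lambda \in \mathbb{T}$, and then by definition of $\phi_{\delta_g}\inv$ we conclude $\llbracket \alpha_n(x), n, x \rrbracket = \phi_{\delta_g}\inv(x,\lambda) = \llbracket \alpha_g(x), \lambda \delta_g, x \rrbracket$, which is of the asserted form $(\alpha_g(x), \lambda\delta_g, x)$ up to the equivalence $\approx$.

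The remaining content of the statement is the claim that $\lambda$ ``can be explicitly computed.'' Here I would proceed as follows: since $[\alpha_n(x),n,x] \sim [\alpha_g(x),\delta_g,x]$, the two normalizers $n$ and $\delta_g$ induce the same partial homeomorphism on a neighborhood of $x$; the $\approx$-relation then refines this by asking for $b, b' \in B$ with $b(x), b'(x) > 0$ and $n b = \lambda\delta_g b'$. To pin down $\lambda$, use the defining Equation~\eqref{eq:defining-eq-for-alpha} for $\alpha_n$ and the fact that $B$ is maximal abelian: choose $b \in B$ supported near $x$ with $b(x) = 1$, so that by Lemma~\ref{lem:stronger-def-for-alpha} (applied with appropriate care about domains) $nb$ and $\delta_g(b\circ\alpha_{g}\inv)$ differ by a scalar of modulus one on the relevant neighborhood; evaluating at the point $x$ (or, more precisely, comparing the values of $\Phi(\delta_g^* n b)$ against $\delta_g^*\delta_g \cdot (b \circ \alpha_g)$ at $x$) yields $\lambda = \Phi(\delta_g^* n)(\alpha_g(x)) / |\Phi(\delta_g^* n)(\alpha_g(x))|$ — a concrete formula, valid because the numerator is nonzero at $\alpha_g(x)$ by Lemma~\ref{lem:nbhd for alpha_i} (since the two partial homeomorphisms agree there). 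In the special but important case $n = \delta_h$ with $[h] = [g]$, one can be entirely explicit: writing $h = sg$ with $s \in S$, Equation~\eqref{eq:delg-dels-delg-star}-type cocycle bookkeeping gives $\delta_{sg} = \overline{c(s,g)}\,\delta_s\delta_g$, and tracking the scalar through $\phi_{\delta_g}$ shows $\lambda$ is the value at $x$ of the unitary part of (a translate of) $\delta_s \in B$ together with the cocycle factor $\overline{c(s,g)}$.

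The main obstacle I anticipate is bookkeeping rather than conceptual: making the passage through $\phi_{\delta_g}$ rigorous requires knowing that the point $[\alpha_n(x), n, x]$ genuinely lies in $\supp'(\delta_g)$ — which needs the $\sim$-equality of $\alpha_n$ and $\alpha_g$ near $x$, itself a consequence of Proposition~\ref{prop:weyl-picture} — and then carefully extracting the $\mathbb{T}$-coordinate from the $\approx$-relation without circularity. The cleanest route is probably to avoid choosing $b, b'$ by hand and instead argue abstractly: $\phi_{\delta_g}$ is a homeomorphism onto $\widehat B \times \mathbb{T}$, so \emph{some} $\lambda$ exists and is unique, and its explicit description follows by composing with the known formula for $\phi_{\delta_g}\inv$ and comparing with the $\mathbb{T}$-action $\lambda \cdot \llbracket \cdot, n, \cdot \rrbracket = \llbracket \cdot, \lambda n, \cdot \rrbracket$. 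I would relegate the fully explicit computation of $\lambda$ in terms of $c$ and the coset data to the cases actually needed later (e.g.\ $n = \delta_h$), since that is all the subsequent analysis of $\Sigma$ as $\K \times_\sigma \mathbb{T}$ requires.
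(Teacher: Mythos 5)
Your first step --- using Proposition~\ref{prop:weyl-picture} to place the underlying Weyl-groupoid element in $\supp'(\delta_g)$ for some $g\in R(S)$, and then invoking the trivialization $\phi_{\delta_g}$ to produce a unique $\lambda\in\mathbb{T}$ with $\llbracket\alpha_n(x),n,x\rrbracket=\llbracket\alpha_g(x),\lambda\delta_g,x\rrbracket$ --- is a legitimate and arguably cleaner route to the \emph{existence} of the representative than the paper's, which instead constructs the scalar by hand. The paper's proof does the work that your appeal to \cite[Lemma 4.16]{renault-cartan} outsources: it picks $f\in B$ supported near $x$ with $f(x)\neq 0$, shows $nf=\delta_g f'$ with $f':=f\delta_g^*n$, proves $f'\in B$ by checking $\alpha_{f'^*}=\mathrm{id}$ and applying Lemma~\ref{lem:alpha-n=id-implies-n-in-B}, and proves $f'(x)\neq 0$ by computing $(f'f'^*)(x)=(ff^*)(x)\,nn^*(\alpha_n(x))>0$. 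This construction is precisely what makes $\lambda$ ``explicitly computable,'' which is the part of the statement that Theorem~\ref{thm:general-Sigma-twist} actually uses.

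It is in that second part that your proposal has genuine errors. First, your formula evaluates at the wrong point: the correct scalar is $\Phi(\delta_g^*n)(x)/\lvert\Phi(\delta_g^*n)(x)\rvert$, evaluated at the \emph{source} $x$, not at $\alpha_g(x)$. Indeed, by Lemma~\ref{lem:nbhd for alpha_i} the function $\Phi(\delta_g^*n)\in B$ vanishes off the set $\{y\in\mathrm{dom}(n):\alpha_n(y)=\alpha_g(y)\}$; that set contains $x$ but there is no reason for it to contain $\alpha_g(x)$, so your expression may well be $0/0$. (Compare Renault's formula for $\widehat{\delta_g}$, and the scalar $\mu=(\delta_{r_{[gh]}}^*n)(y)$ in the proof of Theorem~\ref{thm:general-Sigma-twist}, both evaluated at the source.) Second, your justification that the numerator is nonzero ``by Lemma~\ref{lem:nbhd for alpha_i} since the two partial homeomorphisms agree there'' is a converse error: that lemma says disagreement forces $\Phi(n^*m)$ to vanish, not that agreement forces it to be nonzero. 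Non-vanishing at $x$ is true but must be proved, which is exactly what the paper's $(f'f'^*)(x)$ computation accomplishes. Finally, deferring the explicit computation ``to the cases actually needed later'' gives up the content of the lemma, since the general formula (for arbitrary $n\in N(B)$, not just $n=\delta_h$) is what is fed into the identification of the $2$-cocycle $\sigma$.
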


\begin{proof}
    Let $\llbracket\alpha_{n}(  x   ), n,   x   \rrbracket$ be an arbitrary element of $\Sigma.$ We know from Theorem \ref{lem:class-of-g-determines-equality} (cf.~also  Proposition \ref{prop:weyl-picture}) that there exists a (unique) $g\in R(S)$ such that
    \[
        [ \alpha_{n}(  x   ), n,   x   ]
        =
        [ \alpha_{g}(  x   ), \delta_{g},   x   ]
        ,
    \]
    i.e.~there exists a neighborhood $U$ of $  x   $ on which $\alpha_{n}$ and $\alpha_{g}$ coincide. In other words, $\alpha_{n^{*}}$ and $$\alpha_{g\inv} := \alpha_{\delta_{g\inv}} = \alpha_{{c(g,g\inv)} \delta_{g}^*} = \alpha_{ \delta_{g}^*} $$
    coincide on $\alpha_{n} (U)$. If $f\in B$ 
    vanishes outside of $U$, then $f\circ \alpha_{n^{*}} = f\circ \alpha_{g\inv}$ is a globally defined function, and we can use Lemma \ref{lem:stronger-def-for-alpha} together with the definition of $\alpha$ (Equation \eqref{eq:defining-eq-for-alpha}) and the fact that $\delta_{g\inv}^{*} \delta_{g\inv} = \mathrm{id}$ to compute that 
    \begin{align*}
        nf 
        &=
        (f\circ \alpha_{n^{*}}) n
        =
        (f\circ \alpha_{g\inv})n
        =
       \delta_{g} f \delta_{g}^{*} n
        .
    \end{align*}
    Let us check that $f' := f \delta_{g}^{*} n$ is actually an element of $B$: First, if $b\in B$ is arbitrary, then
    \begin{align}
        \begin{split}\label{eq:a-helpful-small-comp}
        f' b f'^{*}
        &=
        (f\delta_{g}^{*} n) b (n^{*} \delta_{g} f^{*}) 
        =
        f \delta_{g}^{*} \left( (b\circ \alpha_{n^{*}}) nn^{*} \right) \delta_{g}f^{*} 
        \\
        &=
        f f^{*} \cdot \left( (b\circ \alpha_{n^{*}}) nn^{*} \right)\circ \alpha_{\delta_{g}}
        =
        f f^{*} b \left( nn^{*} \circ \alpha_{\delta_{g}} \right),
    \end{split}
    \end{align}
    where the last equality follows from the fact that, on the support of $f$, we have $ \alpha_{n^{*}}\circ \alpha_{\delta_{g}} = \mathrm{id}$. On the other hand, since $n\in N(B)$, we have $f'\in N(B)$ and hence by definition of $\alpha_{f'}$:
    \begin{align*}
        f' b f'^{*}
        &=
        (b\circ \alpha_{f'^{*}}) \cdot f'f'^{*}
        =
        (b\circ \alpha_{f'^{*}}) \cdot (f \delta_{g}^{*} n) (n^{*} \delta_{g} f^{*})
        \\
        &=
        (b\circ \alpha_{f'^{*}}) f f^{*} \left( nn^{*} \circ \alpha_{\delta_{g}} \right).
    \end{align*}
    Comparing this with Equation~\eqref{eq:a-helpful-small-comp}, we conclude by uniqueness that $\alpha_{f'^{*}}$ must be the identity on its domain. It follows from Lemma \ref{lem:alpha-n=id-implies-n-in-B} that $f'$ is an element of $B$.
    
    As $\widehat{B}$ is locally compact and Hausdorff,  we can choose a compactly supported function $f$ which vanishes outside of $U$ and has $f(  x   )\neq 0$. We then have (similarly to Equation \eqref{eq:a-helpful-small-comp})
    \begin{align*}
        (f'f'^{*}) (  x   )
        =
        (f f^{*})(  x   ) \cdot nn^{*}\left( \alpha_{\delta_{g}} (  x   )\right)
        =
        (f f^{*})(  x   )\cdot nn^{*}\left( \alpha_{n} (  x   )\right)
        .
    \end{align*}
    This is non-zero since $  x   \in \mathrm{dom}(n)$, and we conclude that $f'(  x   )\neq 0$.

     Now that we have found $f, f' \in B$ such that $f(  x   ) \neq 0 \neq f'(  x   )$ and $nf = \delta_{g} f',$ we can define $b,b'\in B$ and $\lambda\in\mathbb{T}$ by
    \[
        b := \frac{\abs{f(  x   )}}{f(  x   )} f,
        \;
        b' := \frac{\abs{f'(  x   )}}{f'(  x   )} f',\;
        \text{ and }
         \lambda  := \frac{\abs{f(  x   )}}{f(  x   )}\frac{f'(  x   )}{\abs{f'(  x   )}},
    \]
    so that  $b(  x   ) >0$ and $b'(  x   )>0$ and:
    \begin{align*}
        nb
        &=
        n \left( \frac{\abs{f(  x   )}}{f(  x   )} f \right)
        =
        \frac{\abs{f(  x   )}}{f(  x   )} \cdot \delta_{g} f'
        =
        \frac{\abs{f(  x   )}}{f(  x   )} \cdot \delta_{g} \left( \frac{f'(  x   )}{\abs{f'(  x   )}}\right)b'
        =
         \lambda  (\delta_{g} b')
        .
    \end{align*}
    We conclude
    $
        \llbracket \alpha_{n}(  x   ), n,   x   \rrbracket
        =
        \llbracket \alpha_{g}(  x   ),  \lambda \delta_{g},   x   \rrbracket
        ,
    $
    as claimed.
\end{proof}

Our next goal is to show that under the hypotheses of Theorem \ref{lem:class-of-g-determines-equality}, the Weyl twist arises from a 2-cocycle $\sigma$ on $\K.$  First, we will show that $\Sigma$ is a trivial circle bundle; that is, that $\Sigma$ is homeomorphic to $\K \times \mathbb{T}.$

\begin{thm}
\label{thm:general-Sigma}
    Let $G$ be a countable discrete group with 2-cocycle $c$. Suppose $S $ is maximal among abelian subgroups of $G$ on which $c$ is symmetric, and assume further that $S$ is normal and immediately centralizing. Let $\Sigma$ denote the Weyl twist associated to the Cartan pair $( C^{*}_{r}(G, c) , C^{*}_{r}(S, c))$. If the action $\alpha$ on $\widehat B$ is topologically free, then the map $\psi\colon  \mathcal{K}  \times \mathbb{T} \to \Sigma$ given by
    \[
    \psi
   \bigl(   [g], x   ,   \lambda \bigr)
    := 
    \llbracket\alpha_{[g]}(  x   ),  \lambda  \delta_{r_{[g]}},   x   \rrbracket,
    \]
where $ \lambda \in \mathbb{T}$ and $r_{[g]} \in R(S)$ is our chosen representative of $[g]$, is a homeomorphism of topological spaces.
\end{thm}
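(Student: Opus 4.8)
The plan is to establish in turn that $\psi$ is well defined, a bijection, and a homeomorphism, delegating the topological part to Renault's local trivializations $\phi_n$ of the circle bundle $\Sigma$. Well-definedness is quick: since $S$ is normal, $\delta_{r_{[g]}} \in N(B)$ by Equation~\eqref{eq:delg-dels-delg-star}, hence so is $\lambda\delta_{r_{[g]}}$; one has $\mathrm{dom}(\lambda\delta_{r_{[g]}}) = \mathrm{dom}(\delta_{r_{[g]}}) = \widehat B$ because $\delta_{r_{[g]}}^*\delta_{r_{[g]}} = \delta_e$, and $\alpha_{\lambda\delta_{r_{[g]}}} = \alpha_{[g]}$ by Lemmas~\ref{lem:Annas-former-footnote-lemma} and~\ref{lem:alph-only-depends-on-class}, so $(\alpha_{[g]}(x), \lambda\delta_{r_{[g]}}, x)$ is a legitimate triple; the fixed choice $R(S)$ removes any coset ambiguity, so $\psi$ is a well-defined function into $\Sigma$.

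For the bijection, surjectivity is immediate from Lemma~\ref{lem:general-Sigma-description}, which writes an arbitrary element of $\Sigma$ as $\llbracket\alpha_g(x), \lambda\delta_g, x\rrbracket$ with $g \in R(S)$, and this is exactly $\psi([g], x, \lambda)$. For injectivity I would begin from $\psi([g], x, \lambda) = \psi([h], y, \mu)$ and unwind the relation $\approx$: it yields $x = y$ and elements $b, b' \in B$ with $b(x), b'(x) > 0$ and $(\lambda\delta_{r_{[g]}})b = (\mu\delta_{r_{[h]}})b'$. Since $\approx$ refines $\sim$ and scalars are invisible to $\sim$ (Equation~\eqref{eq:scalars-dont-matter-wrt-sim}), passing to the Weyl groupoid gives $\varphi([g], x) = \varphi([h], x)$, so topological freeness together with injectivity of $\varphi$ from Theorem~\ref{lem:class-of-g-determines-equality} forces $[g] = [h]$, and hence $r := r_{[g]} = r_{[h]}$. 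I would then simplify $(\lambda\delta_r)b = (\mu\delta_r)b'$ by pushing $b$ and $b'$ through $\delta_r$ via Lemma~\ref{lem:stronger-def-for-alpha} (so that $\delta_r b = (b\circ\alpha_{\delta_r^*})\delta_r$), multiplying on the right by $\delta_r^*$, and using $\delta_r\delta_r^* = \delta_e$; this leaves an equality $\lambda\,(b\circ\alpha_{\delta_r^*}) = \mu\,(b'\circ\alpha_{\delta_r^*})$ of elements of $B$. Evaluating both sides at $\alpha_{[g]}(x)$ turns it into $\lambda\,b(x) = \mu\,b'(x)$, and since $b(x), b'(x)$ are positive reals with $|\lambda| = |\mu| = 1$ we get $b(x) = b'(x)$ and $\lambda = \mu$. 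Thus $\psi$ is a bijection.

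For the homeomorphism claim I would argue one slice at a time. Fix $[g] \in \under{S}{G}$. The slice $\{[g]\}\times\widehat B\times\mathbb{T}$ is clopen in $\mathcal K\times\mathbb{T}$ because $\under{S}{G}$ carries the discrete topology, and on this slice $\psi$ coincides with the composite of the tautological homeomorphism $\{[g]\}\times\widehat B\times\mathbb{T} \to \mathrm{dom}(\delta_{r_{[g]}})\times\mathbb{T}$ with $\phi_{\delta_{r_{[g]}}}^{-1}$, since $\phi_{\delta_{r_{[g]}}}^{-1}(y, \lambda) = \llbracket\alpha_{[g]}(y), \lambda\delta_{r_{[g]}}, y\rrbracket = \psi([g], y, \lambda)$. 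As $\mathrm{supp}'(\delta_{r_{[g]}})$ is open in $\G_{(A,B)}$ and the bundle projection $\Sigma \to \G_{(A,B)}$ is continuous and open, $\Sigma|_{\mathrm{supp}'(\delta_{r_{[g]}})}$ is open in $\Sigma$; hence $\psi$ restricts to a homeomorphism of the clopen set $\{[g]\}\times\widehat B\times\mathbb{T}$ onto an open subset of $\Sigma$. Since these slices form an open cover of $\mathcal K\times\mathbb{T}$, continuity of $\psi$ holds (it is a local condition), and for any open $W$ one has $\psi(W) = \bigcup_{[g]}\psi\bigl(W\cap(\{[g]\}\times\widehat B\times\mathbb{T})\bigr)$, a union of open sets, so $\psi$ is open as well; a continuous open bijection is a homeomorphism.

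The step I expect to be the main obstacle is the injectivity argument, and within it, extracting $\lambda = \mu$ from the $\approx$-relation once $[g] = [h]$ and $x = y$ are in hand; this is the only point requiring a genuine manipulation inside $A$ rather than bookkeeping with the charts $\phi_n$ and the cited lemmas, and the care needed is in multiplying by $\delta_r^*$ on the correct side and in evaluating the resulting function identity at $\alpha_{[g]}(x)$, not at $x$, so that the positivity hypotheses on $b$ and $b'$ become usable.
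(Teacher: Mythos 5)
Your proof is correct, but it takes a genuinely different route from the paper's in two places. For injectivity, the paper does not pass through the Weyl groupoid at all: from $\lambda\delta_{r_{[g]}}b=\mu\delta_{r_{[h]}}b'$ it deduces that $\delta_{r_{[h]}\inv r_{[g]}}b$ lies in $B$, invokes Lemma~\ref{lem:Bs-elts-have-supp-in-S} to see that this element of $C_{0}(G)$ must be supported in $S$, and evaluates it at a well-chosen group element to force $r_{[h]}\inv r_{[g]}\in S$, hence $r_{[g]}=r_{[h]}$. That argument never uses topological freeness, whereas yours leans on injectivity of $\varphi$ from Theorem~\ref{lem:class-of-g-determines-equality}, which does. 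Since topological freeness is a standing hypothesis of the theorem your shortcut is legitimate and arguably cleaner, though the paper's version has the virtue of isolating exactly where topological freeness enters the overall picture. Your extraction of $\lambda=\mu$ by commuting $b$ past $\delta_{r}$ and evaluating at $\alpha_{[g]}(x)$ is correct but can be shortened: left-multiplying by $\delta_{r}^{*}$ gives $\lambda b=\mu b'$ directly, and one evaluates at $x$. For the topological part, the paper verifies continuity of $\psi$ and of $\psi\inv$ by a hands-on convergence argument with the charts $\phi_{g}$; your observation that $\psi$ restricted to each clopen slice $\{[g]\}\times\widehat{B}\times\mathbb{T}$ is literally the chart homeomorphism $\phi_{\delta_{r_{[g]}}}\inv$ onto the open set $\Sigma|_{\supp'(\delta_{r_{[g]}})}$, so that $\psi$ is a continuous open bijection, packages the same content more structurally and is a genuine simplification.
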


\begin{proof}
 Surjectivity of $\psi$ follows from Lemma \ref{lem:general-Sigma-description}.
 
 For injectivity, take two elements $(  [g],  x    ,\lambda)$ and $( [h], y   ,  \nu)$ 
 of $\mathcal{K} \times \mathbb{T}$.
 If $  x    \neq   y   $, then the source of $\psi\bigl(  [g],   x   ,\lambda \bigr)$ 
 is $  x   $, while the source of $\psi\bigl(   [h], y   , \nu \bigr)$ 
 is $  y   $ (see Equation \eqref{eq:source-and-range-of-Weyl}), and thus $\psi(  [g],x    ,  \lambda) \neq \psi(  [h], y   ,  \nu).$  
 
 So assume $  x    =   y   $. If $\psi(   [g], x    ,\lambda) = \psi( [h],  x   , \nu),$ 
 then there must exist $b, b'\in B$ such that $b(  x   ), b'(  x   ) >0$ and $\lambda \delta_{r_{[g]}} b = \nu \delta_{r_{[h]}} b'$. Rearranging yields
 \[
 	B \ni \overline{\lambda} \nu b' = \delta_{r_{[h]}}^{*} \delta_{r_{[g]}} b =
   \overline{c(r_{[h]}, r_{[h]}\inv)} c(r_{[h]}\inv, r_{[g]}) \delta_{r_{[h]}\inv r_{[g]}} b
    .
 \]
 In particular, $\delta_{r_{[h]}\inv r_{[g]}} b \in B$. If we consider $C^{*}_{r}(G, c)$ as a subset of $C_{0}(G)$ (see \cite[Prop.~II.4.2]{renault}), then for $k\in G$, the formula for the convolution yields
 \[
    (\delta_{r_{[h]}\inv r_{[g]}} b) (k)
    =
    b(r_{[g]}\inv r_{[h]} k) c(r_{[h]}\inv r_{[g]}, r_{[g]}\inv r_{[h]} k)
    .
 \]
Recall from Lemma~\ref{lem:Bs-elts-have-supp-in-S} that an element of $A= C_{r}^{*} (\G,c)$ is in $B=C_{r}^{*}(\S,c)$ exactly if its image in $C_{0}(\G)$ has support in $\S$.
 As $B\ni b\neq 0$, there therefore exists an element $s\in S$ with $b(s)\neq 0.$ Plugging in $k:=  r_{[h]}\inv r_{[g]} s$ 
yields:
 \[
    (\delta_{r_{[h]}\inv r_{[g]}} b) (r_{[g]} r_{[h]}\inv s)
    =
    b(s) c(r_{[h]}\inv r_{[g]}, s) 
    \neq 0.
 \]
 But since $\delta_{r_{[h]}\inv r_{[g]}} b \in B$ is also supported in $S$, this implies that $r_{[g]} r_{[h]}\inv s \in S$. It follows that $[r_{[g]}]= [r_{[h]}]$ in $\under{S}{G}$, so that $r_{[g]}=r_{[h]}$ since $R(S)$ contains exactly one representative for each class in $\under{S}{G}$.
 Thus $\lambda \delta_{r_{[g]}} b = \nu \delta_{r_{[h]}} b'$ implies $\lambda b = \nu b'$. 
 Since $b(  x   )$ and $b'(  x   )$ were assumed to be positive and $\lambda ,\nu \in \mathbb{T}$, this forces $\lambda = \nu$. This concludes our proof of injectivity.

Continuity of $\psi$ follows from the fact that each $\phi_{g}\inv := \phi_{\delta_{g}}\inv$ is continuous (in fact a homeomorphism), and that $\mathcal{K} $ is {\'e}tale since $\under{S}{G}$ is discrete. To be precise, if $(  [g_{i}], x_{i}, t_{i}) \to (  [g],  x   , t)$, 
we must have $  x_{i} \to   x  $ and $t_{i} \to t,$ and for large enough $i$, $[g_{i}] = [g]$ by discreteness. In particular $r_{[g_{i}]}=r_{[g]}$, so for notational convenience, let us denote $g=r_{[g]}$. The continuity of $\phi_{g}\inv$ implies that $\llbracket \alpha_{g}(  x_{i}), t_{i} \delta_{g},   x_{i} \rrbracket \to \llbracket \alpha_{g}(  x   ), t \delta_{g},   x   \rrbracket,$ so $\psi$ is continuous.

For the continuity of $\psi\inv$, suppose that $\gamma_{i}\to \gamma$ in $\Sigma$. By Lemma~\ref{lem:general-Sigma-description}, we know that $\gamma$ can be represented by $(\alpha_{g} (  x   ), \lambda\delta_{g},   x   )$ for some $x\in\widehat{B}$, $g\in R(S)$, and $\lambda\in\mathbb{T},$ and similarly $\gamma_{i}$ can be represented by  $(\alpha_{g_{i}} (  x_{i}   ), \lambda_{i}\delta_{g_{i}},   x_{i}   )$. 
In particular, since we have chosen $ g_{i}\in R(S),$
\begin{align}
    \label{eq:gamma-i}
    \gamma_{i}
    &=
    \llbracket \alpha_{g_{i}}(  x_{i}   ), \lambda_{i} \delta_{g_{i}},   x_{i}    \rrbracket
    =
    \llbracket \alpha_{g_{i}}(  x_{i}   ), \lambda_{i} \delta_{r_{[g_{i}]}},   x_{i}
    \rrbracket
    =
    \psi ([g_{i}], x_{i}, \lambda_{i}) 
    .
\end{align}

By convergence, we have for large enough $i$ that $\gamma_{i}$ is in the basic open neighborhood
\[\phi_{g}\inv(\mathrm{dom}(\delta_{g}) \times \mathbb{T}) = \phi_{g}^{-1}(\widehat{B} \times \mathbb{T}) = \Sigma_{\supp'(\lambda\delta_{g})}\]
of $\gamma=\llbracket \alpha_{g}(  x   ), \lambda \delta_{g},   x    \rrbracket. $  By definition of $\phi^{-1}$
there must hence exist some $\nu_{i} \in \mathbb{T}$ so that
\begin{alignat*}{2}
    \gamma_{i}
    &=
    \llbracket \alpha_{g}(  x_{i}   ), \nu_{i} \delta_{g},   x_{i}    \rrbracket
    \\
    &=
    \llbracket \alpha_{g}(  x_{i}   ), \nu_{i} \delta_{r_{[g]}},   x_{i}    \rrbracket
    &\quad \text{(since $g\in R(S)$)}
    \\
    &=
    \psi ([g], x_{i}, \nu_{i}). 
\end{alignat*}
As we have shown $\psi$ to be injective, we conclude from Equation~\eqref{eq:gamma-i} that $\nu_{i}=\lambda_{i}$ and that $[g]=[g_{i}]$, so $g=g_{i}$ as these elements were chosen to be in $R(S)$.

Because the topology of $\Sigma$ is inherited from that of $\text{dom}(\delta_{g}) \times \mathbb{T}$ via the maps $\phi_{g},$ our hypothesis that $\gamma_{i}
\to \gamma$ implies that  
\begin{align*}
&\phi_{g}\inv
\left(
    \llbracket \alpha_{g}(  x_{i}   ), \nu_{i} \delta_{g},   x_{i}    \rrbracket
\right)
= \phi_{g}\inv
\left(
    \gamma_{i}
\right)
\longrightarrow
\phi_{g}\inv
\left(  \gamma \right)
=
\phi_{g}\inv
\left(
    \llbracket \alpha_{g}(  x   ), \lambda \delta_{g},   x    \rrbracket
\right).
\end{align*}
By the definition of $\phi_{g}$ (see page~\pageref{page:def:phi-g}), this means exactly that the sequence $(x_{i},\lambda_{i})=(x_{i},\nu_{i})$ converges to $(x,\lambda)$ in $\widehat{B}\times\mathbb{T}$. Thus, for $i$ large enough so that $g_{i} = g,$ we have by Equation~\eqref{eq:gamma-i},
\[
    \psi\inv (\gamma_{i})
    =
    ([g_{i}], x_{i},  \lambda_{i}) 
    =
    ([g], x_{i}, \lambda_{i}) 
    \to
    ([g],x,  \lambda) 
    =
    \psi\inv (\gamma).
    \qedhere
\]
\end{proof}

Knowing that the bundle structure on $\Sigma$ is  trivial, we now compute the 2-cocycle on $\K = (\under{S}{G})\tensor[_\alpha]{\ltimes}{} \widehat B $ 
which gives rise to $\Sigma$. 
\begin{lemma}\label{lem:def:cocycle-on-mcK}
    For $y\in \widehat{B}$ and $[g],[h]\in \under{S}{G}$, define
\begin{align*}
    \sigma
    \bigl(
       (  [g],  \alpha_{h}(y)) , ([h],y) 
    \bigr)
    & :=
  \frac{ \left( \delta_{r_{[gh]}}^* \delta_{r_[g]} \delta_{r_{[h]}} \right)(y)}{\left| \delta_{r_{[gh]}}^* \delta_{r_[g]} \delta_{r_{[h]}} (y)\right|}.
\end{align*}
    Then $\sigma$ is a 2-cocycle on the groupoid $\K = (\under{S}{G})\tensor[_\alpha]{\ltimes}{} \widehat B $. 
\end{lemma}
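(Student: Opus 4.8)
The plan is to verify directly that the displayed formula defines a continuous, $\mathbb{T}$-valued function on $\K\2$ obeying the normalization and cocycle identities of the paper's notion of a $2$-cocycle; the key device is to rephrase everything in terms of the elements $u_{[g]} := \delta_{r_{[g]}} \in A = C^*_r(G,c)$. First I would record that, by Lemma~\ref{lem:c-g-ginv}, each $u_{[g]}$ is a unitary of $A$ with $u_{[g]}^* u_{[g]} = u_{[g]} u_{[g]}^* = \delta_e$, the unit of $A$; and I take the coset representatives to satisfy $r_{[e]}=e$, i.e.\ $u_{[e]}=\delta_e$ (the normalization axiom forces this choice). The crucial preliminary observation is that for all $[g],[h] \in \under{S}{G}$ the element $v([g],[h]) := u_{[gh]}^* u_{[g]} u_{[h]}$ lies in $B = C^*_r(S,c)$: expanding the twisted products via $\delta_a*\delta_b = c(a,b)\,\delta_{ab}$ and $\delta_a^* = \overline{c(a,a\inv)}\,\delta_{a\inv}$ shows that $v([g],[h])$ is a unimodular scalar times $\delta_s$ with $s := r_{[gh]}\inv r_{[g]} r_{[h]} \in S$ (since $r_{[g]}r_{[h]}$ and $r_{[gh]}$ represent the same coset), hence $v([g],[h])$ is supported in $S$ and so, by Lemma~\ref{lem:Bs-elts-have-supp-in-S}, belongs to $B$. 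Moreover $v([g],[h])^* v([g],[h]) = v([g],[h]) v([g],[h])^* = \delta_e$, so $v([g],[h])$ is a unitary of $B$; evaluating at any character $y \in \widehat B$ gives $|v([g],[h])(y)|^2 = \delta_e(y) = 1$. Therefore the formula simplifies to
\[
\sigma\bigl(([g],\alpha_{[h]}(y)),([h],y)\bigr) = v([g],[h])(y) = \bigl(u_{[gh]}^* u_{[g]} u_{[h]}\bigr)(y),
\]
which is $\mathbb{T}$-valued, and is continuous because $\under{S}{G}$ is discrete, so that $\K\2$ is a disjoint union of copies of $\widehat B$ indexed by $([g],[h])$ (parametrized by the source), on each of which $\sigma$ is the map $y\mapsto v([g],[h])(y)$ with $v([g],[h])\in C(\widehat B)$.

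Normalization is then immediate: for $k=([g],y)$ we have $s(k)=([e],y)$, and since $[g][e]=[g]$ and $u_{[e]}=\delta_e$, $\sigma(k,s(k)) = (u_{[g]}^* u_{[g]} u_{[e]})(y) = \delta_e(y) = 1$; likewise $\sigma(r(k),k) = (u_{[g]}^* u_{[e]} u_{[g]})(y) = 1$.

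The substance of the proof is the cocycle identity. I would fix a composable triple, necessarily of the form $k_3 = ([g_3],y)$, $k_2 = ([g_2],\alpha_{[g_3]}(y))$, $k_1 = ([g_1],\alpha_{[g_2 g_3]}(y))$, and verify
\[
\sigma(k_1, k_2 k_3)\,\sigma(k_2,k_3) = \sigma(k_1 k_2, k_3)\,\sigma(k_1,k_2).
\]
Substituting the formula above and using that characters of $B$ are multiplicative, the left-hand side is $\bigl((u_{[g_1 g_2 g_3]}^* u_{[g_1]} u_{[g_2 g_3]})(u_{[g_2 g_3]}^* u_{[g_2]} u_{[g_3]})\bigr)(y)$, which after the cancellation $u_{[g_2 g_3]} u_{[g_2 g_3]}^* = \delta_e$ collapses to $(u_{[g_1 g_2 g_3]}^* u_{[g_1]} u_{[g_2]} u_{[g_3]})(y)$. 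On the right-hand side every factor is again of this telescoping type except $\sigma(k_1,k_2) = v([g_1],[g_2])(\alpha_{[g_3]}(y))$, which is evaluated not at $y$ but at the shifted point $\alpha_{[g_3]}(y)$; this is the one place that needs a genuine idea. I would resolve it by invoking the defining equation~\eqref{eq:defining-eq-for-alpha} for $\alpha_{\delta_{r_{[g_3]}}} = \alpha_{[g_3]}$: since $\delta_{r_{[g_3]}}^* \delta_{r_{[g_3]}} = \delta_e$, it reads $u_{[g_3]}^*\,b\,u_{[g_3]} = b \circ \alpha_{[g_3]}$ in $B$ for every $b\in B$, so taking $b = v([g_1],[g_2])$ gives $\sigma(k_1,k_2) = \bigl(u_{[g_3]}^* u_{[g_1 g_2]}^* u_{[g_1]} u_{[g_2]} u_{[g_3]}\bigr)(y)$. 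Then the right-hand side becomes $\bigl((u_{[g_1 g_2 g_3]}^* u_{[g_1 g_2]} u_{[g_3]})(u_{[g_3]}^* u_{[g_1 g_2]}^* u_{[g_1]} u_{[g_2]} u_{[g_3]})\bigr)(y)$, and the cancellations $u_{[g_3]} u_{[g_3]}^* = \delta_e$ and $u_{[g_1 g_2]} u_{[g_1 g_2]}^* = \delta_e$ reduce it to $(u_{[g_1 g_2 g_3]}^* u_{[g_1]} u_{[g_2]} u_{[g_3]})(y)$, matching the left-hand side.

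The only real obstacle, then, is the shifted evaluation point $\alpha_{[g_3]}(y)$ in the term $\sigma(k_1,k_2)$; once this is converted, via the defining property of the partial homeomorphisms $\alpha$, into conjugation of an element of $B$ by the unitary $u_{[g_3]}$, the whole identity degenerates into an elementary telescoping of unitaries inside the commutative algebra $B$. I note that this route uses neither topological freeness nor the twist $\Sigma$ itself: it is purely algebraic, relying only on the facts that $B$ is commutative, that $\delta_e\in B$, and that each $v([g],[h])$ lies in $B$.
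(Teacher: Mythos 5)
Your proof is correct and follows essentially the same route as the paper's: both establish that $\delta_{r_{[gh]}}^*\delta_{r_{[g]}}\delta_{r_{[h]}}$ is a unitary element of $B$, convert the shifted evaluation point $\alpha_{[g_3]}(y)$ into conjugation by the unitary $\delta_{r_{[g_3]}}$ via Equation~\eqref{eq:defining-eq-for-alpha}, and then reduce the cocycle identity to a telescoping cancellation of unitaries read through the (multiplicative) Gelfand transform. Your explicit verification of the normalization axiom, which pins down the choice $r_{[e]}=e$, is a small detail the paper leaves implicit.
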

\begin{proof}
Observe first that  $\sigma$ takes values in $\mathbb{T}$ by construction.  Furthermore, $\delta_{r_{[gh]}}^* \delta_{r_{[g]}} \delta_{r_{[h]}} \in B = C^*_r(S, c)$, because $[r_{[gh]}] = [r_{[g]} r_{[h]}] \in \under{S}{G}$.  Therefore, to check that $\sigma$ is a 2-cocycle, it suffices to check the cocycle condition.  This will follow if we can show  that for any $g, h, k \in G$, we have 
\begin{align}
\label{eq:anna-cocycle}
    \left(\delta_{r_{[ghk]}}^* \delta_{r_{[g]}} \delta_{r_{[hk]}} \right) (\alpha_k^{-1}(y)) & \left( \delta_{r_{[hk]}}^* \delta_{r_{[h]}} \delta_{r_{[k]}} \right) (\alpha_k^{-1}(y)) 
    \\
    &\overset{!}{=}\left(\delta_{r_{[ghk]}}^* \delta_{r_{[gh]}} \delta_{r_{[k]}} \right) (\alpha_k^{-1}(y)) \left( \delta_{r_{[gh]}}^* \delta_{r_{[g]}} \delta_{r_{[h]}} \right) (y) .
    \notag
    \end{align}
Since $y = \alpha_k(\alpha_k^{-1}(y))$ and $\delta_{r_{[k]}}^* \delta_{r_{[k]}} = \delta_e$ satisfies $\delta_{r_{[k]}}^* \delta_{r_{[k]}}(x) = 1$ for all $x \in \widehat{B}$, Equation \eqref{eq:defining-eq-for-alpha} implies for $\alpha_{r_{[k]}}=\alpha_{k}$ that 
\[ \delta_{r_{[k]}}^* \left(  \delta_{r_{[gh]}}^* \delta_{r_{[g]}} \delta_{r_{[h]}}\right) \delta_{r_{[k]}} (\alpha_k^{-1}(y)) = \left(  \delta_{r_{[gh]}}^* \delta_{r_{[g]}} \delta_{r_{[h]}}\right) (y).   \]
The fact that  Gelfand duality is a $*$-algebra homomorphism therefore implies that Equation \eqref{eq:anna-cocycle} will follow if we can show 
\[ 
\left( \delta^*_{r_{[ghk]} }\delta_{r_{[g]}} \delta_{r_{[hk]}}\right)
\,
\left( \delta_{r_{[hk]}}^* \delta_{r_{[h]}} \delta_{r_{[k]}}\right) 
\overset{!}{=}
\left(
    \delta_{r_{[ghk]}}^* \delta_{r_{[gh]}} \delta_{r_{[k]}}
\right)
\,
\left(\delta_{r_{[k]}}^* \delta_{r_{[gh]}}^* \delta_{r_{[g]}} \delta_{r_{[h]}} \delta_{r_{[k]}}\right),\]
and this follows immediately from the fact that $\delta_m$ is a unitary for all $m \in G$.
\end{proof}

\begin{thm}
\label{thm:general-Sigma-twist}
    In the setting of Theorem~\ref{thm:general-Sigma},
    the map $\psi$ is a groupoid homomorphism from $\mathcal{K} \times_{\sigma} \mathbb{T}$ to $\Sigma$ which intertwines the actions of $\mathbb{T}$.
\end{thm}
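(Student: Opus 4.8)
The plan is to verify just two things, after which the theorem follows: that $\psi$ intertwines the evident $\mathbb{T}$-actions, and that $\psi$ preserves groupoid products. Since $\psi$ is already a bijection by Theorem~\ref{thm:general-Sigma}, and since any map between groupoids that carries composable pairs to composable pairs and preserves products automatically preserves units and inverses, these two verifications suffice. The circle actions are $z\cdot([g],x,\lambda)=([g],x,z\lambda)$ on $\K\times_\sigma\mathbb{T}$ and $z\cdot\llbracket\alpha_n(x),n,x\rrbracket=\llbracket\alpha_n(x),zn,x\rrbracket$ on $\Sigma$, so the intertwining is immediate from the formula defining $\psi$, as
\[
\psi\bigl(z\cdot([g],x,\lambda)\bigr)=\llbracket\alpha_{[g]}(x),\,z\lambda\,\delta_{r_{[g]}},\,x\rrbracket=z\cdot\psi\bigl([g],x,\lambda\bigr).
\]

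To handle the products I would first record that a composable pair in $\K\times_\sigma\mathbb{T}$ has the form $\bigl(([g],\alpha_{[h]}(y)),\lambda\bigr),\ \bigl(([h],y),\mu\bigr)$, with product $\bigl(([gh],y),\,\sigma_0\lambda\mu\bigr)$, where $\sigma_0:=\sigma\bigl(([g],\alpha_{[h]}(y)),([h],y)\bigr)$. Applying $\psi$ and reading Equation~\eqref{eq:source-and-range-of-Weyl} through the quotient map $\Sigma\to\G_{(A,B)}$, the first image has source $\alpha_{[h]}(y)$ and the second has range $\alpha_{[h]}(y)$, so the images are composable in $\Sigma$. Using the multiplication of the Weyl twist — the evident lift of the one for $\G_{(A,B)}$ recalled in Section~\ref{sec:background}, i.e.\ $\llbracket\alpha_n(\alpha_m(x)),n,\alpha_m(x)\rrbracket\cdot\llbracket\alpha_m(x),m,x\rrbracket=\llbracket\alpha_{nm}(x),nm,x\rrbracket$ (cf.~\cite{renault-cartan}) — together with $\alpha_{\lambda\delta_{r_{[g]}}}=\alpha_{[g]}$ (Lemma~\ref{lem:Annas-former-footnote-lemma}) and $\alpha_{[g]}\circ\alpha_{[h]}=\alpha_{[gh]}$, the product in $\Sigma$ of the two images equals $\llbracket\alpha_{[gh]}(y),\,\lambda\mu\,\delta_{r_{[g]}}\delta_{r_{[h]}},\,y\rrbracket$. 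Thus multiplicativity reduces to the single identity
\[
\llbracket\alpha_{[gh]}(y),\,\lambda\mu\,\delta_{r_{[g]}}\delta_{r_{[h]}},\,y\rrbracket
=
\llbracket\alpha_{[gh]}(y),\,\sigma_0\lambda\mu\,\delta_{r_{[gh]}},\,y\rrbracket.
\]

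To prove this last identity I would argue straight from the definition of $\approx$. Put $b_0:=\delta_{r_{[gh]}}^*\delta_{r_{[g]}}\delta_{r_{[h]}}$; as in the proof of Lemma~\ref{lem:def:cocycle-on-mcK}, $b_0\in B$ because $[r_{[gh]}]=[r_{[g]}r_{[h]}]$, and $\sigma_0=b_0(y)/\lvert b_0(y)\rvert$ by definition. The key observation is that each $\delta_k$ ($k\in G$) is a unitary of $A$, since $\delta_k^*\delta_k=\delta_k\delta_k^*=\delta_e=1_A$ by Lemma~\ref{lem:c-g-ginv}; hence $b_0$ is a unitary of $A$ lying in the unital subalgebra $B$, so a unitary of $B\cong C(\widehat B)$, and therefore nowhere zero — in particular $b_0(y)\in\mathbb{T}$. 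Choosing any $b\in B$ with $b(y)>0$ and setting $b':=\overline{\sigma_0}\,b_0\,b\in B$, one gets $b'(y)=\overline{\sigma_0}\,b_0(y)\,b(y)=\lvert b_0(y)\rvert\,b(y)>0$, while, since $\delta_{r_{[gh]}}$ is unitary and $B$ is commutative,
\[
\delta_{r_{[g]}}\delta_{r_{[h]}}\,b=\delta_{r_{[gh]}}\,b_0\,b=\sigma_0\,\delta_{r_{[gh]}}\,b'.
\]
Multiplying by $\lambda\mu$, the equation $(\lambda\mu\,\delta_{r_{[g]}}\delta_{r_{[h]}})\,b=(\sigma_0\lambda\mu\,\delta_{r_{[gh]}})\,b'$ with $b(y),b'(y)>0$ is precisely the defining condition for the two triples above to be $\approx$-equivalent, so the identity holds and $\psi$ indeed preserves products.

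I expect the step needing the most care to be this last one: keeping the cocycle and scalar factors straight, and in particular observing that $\delta_{r_{[gh]}}^*\delta_{r_{[g]}}\delta_{r_{[h]}}$ is a \emph{unitary} of $B$ and hence nowhere vanishing on $\widehat B$, which is exactly what makes it possible to solve $b'=\overline{\sigma_0}\,b_0 b$ while retaining $b'(y)>0$. The remaining ingredients — the form of composable pairs and products in $\K\times_\sigma\mathbb{T}$, the multiplication in $\Sigma$, and the $\mathbb{T}$-actions — are a routine unwinding of the relevant definitions.
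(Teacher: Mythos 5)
Your proof is correct and follows essentially the same route as the paper: reduce multiplicativity to the identity $\llbracket \alpha_{[gh]}(y),\lambda\mu\,\delta_{r_{[g]}}\delta_{r_{[h]}},y\rrbracket=\llbracket \alpha_{[gh]}(y),\sigma_0\lambda\mu\,\delta_{r_{[gh]}},y\rrbracket$ and settle it using the fact that $\delta_{r_{[gh]}}^*\delta_{r_{[g]}}\delta_{r_{[h]}}$ is a unimodular element of $B$. The only (harmless) difference is that you exhibit the witnesses $b,b'$ for the $\approx$-relation directly, whereas the paper reaches the same conclusion by specializing the proof of Lemma~\ref{lem:general-Sigma-description} to $f\equiv 1$ and computing $\delta_{r_{[gh]}}^*n$ explicitly via cocycle identities.
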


\begin{proof}
Note  that 
$
    \psi ([e],x,1) 
    = \llbracket \alpha_{[e]}(  x   ), \delta_{r_{[e]}},   x    \rrbracket
$
is an element of $\Sigma\z$ since $ \alpha_{[e]}(  x   )=x$, so $\psi$ preserves the unit space.

We compute on the one hand for $\xi  :=(  [g], \alpha_{h}(y)   )$ and $\zeta := ([h],y),$ 
\begin{align*}
    &\psi\left(
    ( \xi  , \lambda )(\zeta ,\nu )
    \right)
    =
    \psi
        ([gh],y , \sigma (\xi , \zeta)\,\lambda\nu ) 
    \\
    &\quad=\llbracket \alpha_{gh}(y ), \lambda\nu\,\sigma (\xi , \zeta)\,\delta_{r_{[gh]}}, y  \rrbracket
     \\
    &\quad=\llbracket \alpha_{g}(  \alpha_{h}(y)   ), \lambda\nu\,\sigma (\xi , \zeta) \,\delta_{r_{[gh]}}, y  \rrbracket
    .
\end{align*}
On the other hand,
\begin{align*}
    &\psi(  \xi , \lambda )\psi(\zeta,\nu )
    = \llbracket \alpha_{[g]}(  \alpha_{h}(y)   ), \lambda  \delta_{r_{[g]}},   \alpha_{h}(y)    \rrbracket \cdot  \llbracket   \alpha_{h}(y)   , \nu  \delta_{r_{[h]}}, y \rrbracket
     \\
    &\quad= \llbracket\alpha_{g}(  \alpha_{h}(y)   ), \lambda  \nu \,\delta_{r_{[g]}} \delta_{r_{[h]}}, y \rrbracket
    = \llbracket \alpha_{g}(  \alpha_{h}(y)   ), \lambda\nu \,c(r_{[g]}, r_{[h]})\,\delta_{r_{[g]}r_{[h]}}, y  \rrbracket
     .
\end{align*}
Let $n:=  \lambda\nu \,c(r_{[g]}, r_{[h]})\,\delta_{r_{[g]}r_{[h]}} = \lambda \nu \, \delta_{r_{[g]}} \delta_{r_{[h]}}$, and note that 
\[
    \alpha_{n} = \alpha_{r_{[g]}r_{[h]}} = \alpha_{gh},
\]
where the first equation is due to Lemma~\ref{lem:Annas-former-footnote-lemma} and the second due to Lemma~\ref{lem:alph-only-depends-on-class}, using that $S$ is normal. Thus, we have to show that
\begin{align}\label{eq:current-claim}
    \llbracket \alpha_{n}(  y   ), n, y \rrbracket
    \overset{!}{=}
    \llbracket \alpha_{r_{[gh]}}(  y   ), \lambda\nu\,\sigma (\xi , \zeta) \,\delta_{r_{[gh]}}, y \rrbracket
    .
\end{align}

Taking $f$ to be the constant function 1 in the proof of Lemma~\ref{lem:general-Sigma-description}, and observing that $\delta_{r_{[gh]}}^* n \in C_0(\widehat B)$ takes values in $\mathbb{T}$, reveals that  $
    \mu
    :=
    (\delta_{r_{[gh]}}^{*} n)(y)$
has the property
\begin{align*}
    \llbracket \alpha_{n}(  y   ), n, y \rrbracket
    =
    \big\llbracket \alpha_{r_{[gh]}}(  y   ), \tfrac{\mu}{\abs{\mu}} \, \delta_{r_{[gh]}}, y 
    \big\rrbracket
    =
    \llbracket \alpha_{r_{[gh]}}(  y   ), \mu\, \delta_{r_{[gh]}}, y \rrbracket
    .
\end{align*}
Indeed, the element $\delta_{r_{[gh]}}^{*} n$ of $C_{r}^{*}(G,c)$ is given by
\begin{align*}
    \delta_{r_{[gh]}}^{*} n
    =
    \lambda\nu \,
    \overline{c(r_{[gh]}\inv, r_{[gh]})} \,c(r_{[gh]}\inv, r_{[g]}r_{[h]})\,c(r_{[g]}, r_{[h]})\,\delta_{r_{[gh]}\inv r_{[g]}r_{[h]}}
    .
\end{align*}
Thus
\[
    \mu
    =
    (\delta_{r_{[gh]}}^{*} n)(y)
    =
    \lambda\nu\,
    \overline{c(r_{[gh]}\inv, r_{[gh]})} \,c(r_{[gh]}\inv, r_{[g]}r_{[h]}) \,c(r_{[g]}, r_{[h]})\,\delta_{r_{[gh]}\inv r_{[g]}r_{[h]}} (y).
\]
We now observe that 
\[  \overline{c(r_{[gh]}\inv, r_{[gh]})} \,c(r_{[gh]}\inv, r_{[g]}r_{[h]}) \,c(r_{[g]}, r_{[h]})\,\delta_{r_{[gh]}\inv r_{[g]}r_{[h]}} = \delta_{r_{[gh]}}^* \delta_{r_{[g]}} \delta_{r_{[h]}},
\]
revealing that $\mu $ is a positive scalar multiple of $ \lambda\nu\,\sigma (\xi , \zeta)$, so that 
Equation~\eqref{eq:current-claim} indeed holds.

Lastly, to see that $\psi$ intertwines the actions of $\mathbb{T}$ on $\mathcal{K}  \times_{\sigma} \mathbb{T}$ and $\Sigma$, we compute for $\nu  \in \mathbb{T}$:
\begin{align*}
    \psi(\nu  \cdot
        (  [g],  x   , \lambda ) 
    )
    &=
    \psi
     ( [g], x   ,  \nu\lambda) 
    = \llbracket \alpha_{g}(  x   ), \nu\lambda \delta_{r_{[g]}},   x    \rrbracket \\
&= \nu  \cdot \llbracket\alpha_{g}(  x   ), \lambda  \delta_{r_{[g]}},   x    \rrbracket = \nu  \cdot \psi
   (  [g],x   , \lambda ). 
\qedhere
\end{align*}
\end{proof}

\begin{thm}
\label{thm:general-untwist-case}
 With the assumptions from Theorem~\ref{thm:general-Sigma}, the Cartan pair
$(C_{r}^{*}(\mathcal{K} \times_{\sigma} \mathbb{T} ),C_0(\widehat{B}))$ is isomorphic to the pair $(C_{r}^{*}(G,c), C^{*}_{r}(S,c))$.
\end{thm}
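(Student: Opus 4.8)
The plan is to recognize that the maps $\varphi$ of Theorem~\ref{lem:class-of-g-determines-equality} and $\psi$ of Theorems~\ref{thm:general-Sigma} and~\ref{thm:general-Sigma-twist} together constitute an isomorphism of twists, and to push this across to the reduced twisted $C^*$-algebras. Since $G$ is countable, $(A,B) := (C^*_r(G,c), C^*_r(S,c))$ is a \emph{separable} Cartan pair by Theorem~\ref{thm:main}, so \cite[Theorem 5.2]{renault-cartan} gives an isomorphism of pairs $(A,B) \cong (C^*_r(\G,\Sigma), C_0(\G\z))$, where $\G := \G_{(A,B)}$ and $\Sigma := \Sigma_{(A,B)}$; recall that $\G\z = \widehat B = \Sigma\z$ and that $\widehat B$ is also the unit space of $\K = (\under{S}{G})\ltimes_\alpha\widehat B$, so that $C_0(\widehat B) = C_0(\K\z)$ sits in $C^*_r(\K\times_\sigma\mathbb T)$ as the functions on the unit space. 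It therefore suffices to produce an isomorphism of pairs $(C^*_r(\K\times_\sigma\mathbb T), C_0(\widehat B)) \cong (C^*_r(\G,\Sigma), C_0(\G\z))$.

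First I would check that $(\psi,\varphi)$ is an isomorphism of twists. By Theorem~\ref{thm:general-Sigma}, $\psi$ is a homeomorphism; by Theorem~\ref{thm:general-Sigma-twist} it is a $\mathbb T$-equivariant groupoid homomorphism, and (as observed in its proof) it preserves unit spaces. A bijective unit-preserving groupoid homomorphism reflects composability and hence has a homomorphism inverse, so $\psi$ is an isomorphism of topological groupoids that carries the unit space of $\K\times_\sigma\mathbb T$ onto that of $\Sigma$. Moreover $\psi$ covers $\varphi$: applying the quotient map $\Sigma\to\G$ to $\psi([g],x,\lambda) = \llbracket\alpha_{[g]}(x), \lambda\delta_{r_{[g]}}, x\rrbracket$ gives $[\alpha_{[g]}(x),\delta_{r_{[g]}},x]$, which equals $[\alpha_g(x),\delta_g,x] = \varphi([g],x)$ by Lemmas~\ref{lem:alph-only-depends-on-class} and~\ref{lem:Annas-former-footnote-lemma}. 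Thus $(\psi,\varphi)$ is a $\mathbb T$-equivariant topological groupoid isomorphism over the groupoid isomorphism $\varphi$ --- an isomorphism of twists.

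It then remains to invoke the functoriality of the reduced twisted groupoid construction with respect to twist isomorphisms. Concretely, $f\mapsto f\circ\psi\inv$ is a $*$-isomorphism between the dense section $*$-algebras underlying $C^*_r(\K\times_\sigma\mathbb T)$ and $C^*_r(\G,\Sigma)$; since $\psi$ restricts to bijections between the fibres of the two twists lying over corresponding units (it covers the homeomorphism $\varphi|_{\K\z}$ of unit spaces), it implements unitary equivalences of the $\ell^2$-representations that define the two reduced norms (cf.\ \cite[Example 2.9]{equiv-disint}, \cite[Proposition II.4.2]{renault}). Hence this $*$-isomorphism is isometric, extends to $C^*_r(\K\times_\sigma\mathbb T)\xrightarrow{\ \cong\ }C^*_r(\G,\Sigma)$, and manifestly carries $C_c(\K\z)$ onto $C_c(\G\z)$, hence $C_0(\widehat B) = C_0(\K\z)$ onto $C_0(\G\z)$. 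Composing with the isomorphism of pairs from \cite[Theorem 5.2]{renault-cartan} recalled above gives the claimed $(C^*_r(\K\times_\sigma\mathbb T), C_0(\widehat B)) \cong (C^*_r(G,c), C^*_r(S,c))$.

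I expect the functoriality step to be the part requiring the most care: one must confirm that an abstract isomorphism of topological twists genuinely induces an isometric $*$-isomorphism of the \emph{reduced} (not merely full) twisted groupoid $C^*$-algebras that matches the canonical abelian subalgebras, and that nothing goes wrong in the nonunital case ($\widehat B$ need not be compact). The groupoid bookkeeping of the second paragraph is routine given Theorems~\ref{thm:general-Sigma} and~\ref{thm:general-Sigma-twist}, but it is precisely what ensures that the transported norm really is the reduced one.
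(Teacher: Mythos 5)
Your proposal is correct and follows essentially the same route as the paper: establish that $(\psi,\varphi)$ is an isomorphism of twists from $(\K,\K\times_\sigma\mathbb T)$ to $(\G_{(A,B)},\Sigma_{(A,B)})$, deduce a diagonal-preserving isomorphism of the reduced twisted $C^*$-algebras, and compose with Renault's identification of the Cartan pair with its Weyl data (the paper cites \cite[Theorem 5.9]{renault-cartan} where you cite Theorem 5.2, a cosmetic difference). You spell out the functoriality step that the paper asserts in one line, which is a reasonable amount of extra care but not a different argument.
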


\begin{proof}
	Theorems~\ref{lem:class-of-g-determines-equality} and  \ref{thm:general-Sigma} show that the twisted groupoid  $(\mathcal{K} , \mathcal{K}  \times_{\sigma} \mathbb{T})$ is isomorphic to the twisted groupoid $(\G_{(A,B)} , \Sigma_{(A,B)}),$ the Weyl twist associated to the Cartan pair $(A,B)=(C^{*}_{r}(G, c), C^{*}_{r}(S, c))$.  
	Thus, $C_{r}^{*}(\G_{(A,B)} , \Sigma_{(A,B)})$ is isomorphic to $C_{r}^{*}(\mathcal{K}, \mathcal{K}\times_{\sigma}\mathbb{T})$ in a diagonal-preserving fashion. By \cite[Theorem 5.9]{renault-cartan}, we thus have
	\begin{align*}
    	(C^{*}_{r}(G, c),C^{*}_{r}(S,c))
    	&\cong (C^{*}_{r}(\G_{(A,B)} , \Sigma_{(A,B)}),C_0(\G_{(A,B)}\z ))
    	\\
    	&\cong  (C^{*}_{r}(\mathcal{K}, \mathcal{K}\times_{\sigma}\mathbb{T}  ),C_0(\mathcal{K}\z ))\\
    	& \cong (C^*_r(\K, \sigma), C_0(\K\z)).
    	\qedhere
	\end{align*}
\end{proof}

\begin{rmk}
The isomorphism in Theorem \ref{thm:general-untwist-case} from $C_r^*(G,c)$ to $C_r^*(\mathcal{K}, \mathcal{K}\times_\sigma \mathbb{T})$ may be defined explicitly on the generators of $C_r^*(G,c)$ by
\begin{equation*}
    \delta_g \mapsto \widehat{\delta_g} \circ \psi, \quad g \in G,
\end{equation*}
where $\widehat{\delta_g} \in C_0(\Sigma) \subseteq C^*_r(\G, \Sigma)$ is defined by Renault in \cite[Lemma 5.3]{renault-cartan} by
\[ \widehat{\delta_{g}}(\llbracket \alpha_{n}(  y  ), n,   y   \rrbracket) := \frac{\Phi(n^{*}\delta_{g})(  y  )}{\sqrt{n^{*}n(  y  )}},\]
	and $\Phi \colon C^{*}_{r}(G, c) \to C^{*}_{r}(S , c)$ is the conditional expectation.
\end{rmk}

 {When the 2-cocycle $c$ is trivial on $S$, as will be the case in the examples considered in Section \ref{sec:apps} below, we can identify the Gelfand dual $\widehat B$ of the Cartan subalgebra  $B = C^*_r(S, c) \cong C^*_r(S)$ with the Pontryagin dual $\widehat S$ of $S$.}

	To be precise, if $c$ is {trivial} on $S$, 
	the map
	\[
		\Psi\colon \widehat{S} \overset{\cong}{\longrightarrow} \widehat{B},
		\quad
		\text{ determined by }
		\Psi (\nu) := \left[ C_{c} (S, c) \ni \ b \mapsto \sum_{s\in S} b(s) \nu (s) \right],
	\]
	is a homeomorphism with inverse
	\[
		\Psi\inv (\chi) = \bigl[ s \mapsto \mathrm{ev}_{\delta_{s}}(\chi):=\chi (\delta_{s}) \bigr].
	\]

\begin{prop}\label{lem:c_trivial_H} Suppose $c$ is trivial on $S$.  Then the action 	    $[g] \mapsto \tilde{\alpha}_{[g]}:=\Psi\inv \circ \alpha_{g} \circ \Psi$   of $\under{S}{G}$ on $\widehat{S}$ induced by $\Psi$   is given by

	  \[	\tilde{\alpha}_{[g]} (\nu) 
		=
		\Bigl[
	    	s\mapsto 
		    \overline{c(g, g\inv)} \, c(g\inv, s) \, c(g\inv s, g) \, \nu (g\inv s g)
		\Bigr]
		,
	\]
	for $[g]\in \under{S}{G}$, $\nu\in \widehat{S}$, and $s\in S$. Moreover the transformation groupoid
    $\H:=(\under{S}{G}) \tensor[_{\tilde{\alpha}}]{\ltimes}{} \widehat{S} $ 
	is isomorphic to 
	the	Weyl groupoid via
\[
        \G_{(A,B)} \to \mathcal{H} , 
        \quad
        [\alpha_{g}(  x   ), \delta_{g},   x  ] 
        \mapsto 
        ( [g], \Psi\inv(x) ). 
\]
\end{prop}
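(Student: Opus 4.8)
The plan is to treat the two assertions separately: first derive the explicit formula for the action $\tilde\alpha$, which is a direct computation using the defining property \eqref{eq:defining-eq-for-alpha} of the partial homeomorphisms $\alpha_g$, and then obtain the groupoid isomorphism by transporting the isomorphism $\varphi$ of Theorem \ref{lem:class-of-g-determines-equality} across the homeomorphism $\Psi$.

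\textbf{The formula for $\tilde\alpha$.} By definition $\tilde\alpha_{[g]} = \Psi^{-1}\circ\alpha_g\circ\Psi$, so for $\nu\in\widehat S$ and $s\in S$ the description of $\Psi^{-1}$ gives $\tilde\alpha_{[g]}(\nu)(s)=\mathrm{ev}_{\delta_s}\bigl(\alpha_g(\Psi(\nu))\bigr)=\alpha_g(\Psi(\nu))(\delta_s)$. Applying \eqref{eq:defining-eq-for-alpha} with $n=\delta_g$ and using $\delta_g^*\delta_g=\delta_e$ (so $\mathrm{dom}(\delta_g)=\widehat B$ and $\delta_g^* b\delta_g = b\circ\alpha_g$ as a globally defined function for every $b\in B$), one evaluates the resulting equality of functions at $\Psi(\nu)$ to get $\alpha_g(\Psi(\nu))(b)=\Psi(\nu)(\delta_g^* b\delta_g)$; taking $b=\delta_s$ reduces everything to computing $\delta_g^*\delta_s\delta_g$ in $C_c(G,c)$. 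Using $\delta_a\delta_b=c(a,b)\delta_{ab}$ together with $\delta_g^*=\overline{c(g,g^{-1})}\,\delta_{g^{-1}}$ (the latter from the formula for the involution and Lemma \ref{lem:c-g-ginv}), one finds
\[
\delta_g^*\delta_s\delta_g = \overline{c(g,g^{-1})}\,c(g^{-1},s)\,c(g^{-1}s,g)\,\delta_{g^{-1}sg},
\]
with $g^{-1}sg\in S$ by normality of $S$. Since $\Psi(\nu)(\delta_t)=\sum_{u\in S}\delta_t(u)\nu(u)=\nu(t)$ for $t\in S$, substituting $t=g^{-1}sg$ yields exactly the claimed expression. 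That $\tilde\alpha_{[g]}$ depends only on the class $[g]$ and that $[g]\mapsto\tilde\alpha_{[g]}$ is a genuine action of $\under{S}{G}$ on $\widehat S$ both follow at once from the analogous facts for $\alpha$ (Lemmas \ref{lem:alph-only-depends-on-class} and \ref{lem:Annas-former-footnote-lemma}), since $\Psi$ is a homeomorphism conjugating $\tilde\alpha_{[g]}$ to $\alpha_g$.

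\textbf{The isomorphism $\G_{(A,B)}\cong\H$.} Because $\Psi$ conjugates $\tilde\alpha$ to $\alpha$, the map $\mathrm{id}\times\Psi\colon \H\to\K=(\under{S}{G})\ltimes_\alpha\widehat B$, $([g],\nu)\mapsto([g],\Psi(\nu))$, is an isomorphism of topological transformation groupoids; here $\H$ is \'etale, second countable, and locally compact Hausdorff for the same reasons as $\K$, recorded just before Theorem \ref{lem:class-of-g-determines-equality}. For the second assertion we invoke Theorem \ref{lem:class-of-g-determines-equality}, which requires $\alpha$ to be topologically free — equivalently $\tilde\alpha$ is topologically free, since $\Psi$ is a homeomorphism intertwining the two actions. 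Composing $\mathrm{id}\times\Psi$ with the isomorphism $\varphi\colon \K\to\G_{(A,B)}$, $\varphi([g],x)=[\alpha_g(x),\delta_g,x]$, of Theorem \ref{lem:class-of-g-determines-equality} produces an isomorphism of topological groupoids $\H\to\G_{(A,B)}$, $([g],\nu)\mapsto[\alpha_g(\Psi(\nu)),\delta_g,\Psi(\nu)]$, whose inverse is precisely the map $[\alpha_g(x),\delta_g,x]\mapsto([g],\Psi^{-1}(x))$ in the statement; every element of $\G_{(A,B)}$ has a representative of this form by Proposition \ref{prop:weyl-picture}, so the inverse is everywhere defined.

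\textbf{Main obstacle.} There is no substantive difficulty: the only place requiring care is the explicit computation of the scalar coefficient in $\delta_g^*\delta_s\delta_g$ and the accompanying bookkeeping of $\mathbb T$-valued $2$-cocycle factors, which is routine given $\delta_a\delta_b=c(a,b)\delta_{ab}$. Once the coordinates are in place, the ``Moreover'' clause is simply the observation that the Weyl groupoid of Theorem \ref{lem:class-of-g-determines-equality} is being rewritten through the canonical homeomorphism $\widehat S\cong\widehat B$.
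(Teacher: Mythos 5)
Your proposal is correct and follows essentially the same route as the paper's own proof: you evaluate $\tilde\alpha_{[g]}(\nu)(s)$ via $\Psi$ and the defining relation \eqref{eq:defining-eq-for-alpha}, reduce to the computation of $\delta_g^*\delta_s\delta_g$ in $C_c(G,c)$ (the paper cites Equation \eqref{eq:delg-dels-delg-star} for this), and obtain the isomorphism by transporting Theorem \ref{lem:class-of-g-determines-equality} across the homeomorphism $\Psi$. Your explicit remark that topological freeness is being inherited from Theorem \ref{lem:class-of-g-determines-equality} is a fair and accurate reading of what the paper leaves implicit in its closing sentence.
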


\begin{proof}
	Let us be very precise and carefully distinguish between $B$ and $C_{0}(\widehat{B})$; so we rewrite the defining equation for $\alpha_{g}$ 	as follows:
	\[
		\text{``For all } b\in B:
		\quad
		\mathrm{ev}_{\delta_{g}^{*} b \delta_{g}}
	    =
	    \mathrm{ev}_{b}\circ\alpha_{g}\text{.''}
	\]
	One easily verifies that $\tilde{\alpha}_{[g]}$ is indeed well-defined; that is, it only depends on $[g]$ and not on $g$.
	Now, using the defining property of $\alpha_g$ in the third equality, we compute for $\nu\in \widehat{S}$ and $s\in S$ that
	\begin{alignat*}{3}
	( \tilde{\alpha}_{[g]} (\nu))(s) 
		&=
		\left(\Psi\inv \circ \alpha_{g} \circ \Psi\right) (\nu) (s)
		=
		\mathrm{ev}_{\delta_{s}} \Bigl(\alpha_{g} \bigl(\Psi (\nu)  \bigr)\Bigr)	
		\\
		&=
		\mathrm{ev}_{\delta_{g}^{*} \delta_{s} \delta_{g}} \bigl(\Psi (\nu)  \bigr)
	=
		\Psi (\nu) \left(\delta_{g}^{*} \delta_{s} \delta_{g} \right)
		=
		\sum_{t\in S} \left(\delta_{g}^{*} \delta_{s} \delta_{g} \right)(t) \cdot \nu (t)
.
	\end{alignat*}
By Equation~\eqref{eq:delg-dels-delg-star}, we conclude the formula for $\tilde{\alpha}_{[g]}$. The claim about the isomorphisms follows directly from Theorem~\ref{lem:class-of-g-determines-equality} and the way we defined~$\tilde{\alpha}$.
\end{proof}

Thanks to Lemma~\ref{lem:def:cocycle-on-mcK}, the map
\begin{align}\label{eq:def:tilde-sigma}
    \tilde{\sigma} \bigl(
    ( [g],  \tilde{\alpha}_{h}(\nu) ) , ([h],\nu)
    \bigr)
    & :=
  \frac{ \left( \delta_{r_{[gh]}}^* \delta_{r_{[g]}} \delta_{r_{[h]}} \right)(\Psi(\nu))}{\left| \delta_{r_{[gh]}}^* \delta_{r_{[g]}} \delta_{r_{[h]}} (\Psi(\nu))\right|}.
\end{align}
for $\nu\in \widehat{S}$, $[g],[h]\in \under{S}{G}$, defines a 2-cocycle on the groupoid $\H=(\under{S}{G}) \tensor[_{\tilde{\alpha}}]{\ltimes}{} \widehat{S} $ 
if $c$ is trivial. Our previous results can be rephrased as follows:

\begin{cor}[of Theorems~\ref{thm:general-Sigma} and \ref{thm:general-Sigma-twist}]
\label{cor:general-Sigma-c-trivial-on-S}
In the setting of Theorem \ref{thm:general-Sigma}, assume moreover that $c$ is trivial on $S$. Then the map $\tilde{\psi}\colon  \mathcal{K}  \times_{\tilde{\sigma}} \mathbb{T} \to \Sigma$ given by
\[
    \tilde{\psi}
    \bigl(  [g], \nu   ,  \lambda\bigr) 
    := 
    \llbracket\alpha_{[g]}(  \Psi(\nu)   ), \lambda \delta_{r_{[g]}},   \Psi(\nu)   \rrbracket
    =
    \llbracket \Psi \bigl( \tilde{\alpha}_{[g]}(\nu)   \bigr), \lambda \delta_{r_{[g]}},  \Psi(\nu)   \rrbracket
    ,
\]
where $\lambda\in \mathbb{T}$ and $r_{[g]} \in R(S)$ is our preferred representative of $[g]$, is an isomorphism of topological groupoids.
\end{cor}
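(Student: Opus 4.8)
The plan is to deduce this corollary from Theorems~\ref{thm:general-Sigma} and~\ref{thm:general-Sigma-twist} by transporting everything along the homeomorphism $\Psi\colon\widehat{S}\to\widehat{B}$ introduced just before Proposition~\ref{lem:c_trivial_H}. The domain $\mathcal{K}\times_{\tilde\sigma}\mathbb{T}$ in the statement is, under the identification $\widehat{S}\cong\widehat{B}$ afforded by $\Psi$, the groupoid $\mathcal{H}\times_{\tilde\sigma}\mathbb{T}$ with $\mathcal{H}=(\under{S}{G})\tensor[_{\tilde\alpha}]{\ltimes}{}\widehat{S}$, and I would make that identification explicit at the start of the proof.

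First I would check that $\Theta:=\mathrm{id}_{\under{S}{G}}\times\Psi$ is an isomorphism of topological groupoids from $\mathcal{H}=(\under{S}{G})\tensor[_{\tilde\alpha}]{\ltimes}{}\widehat{S}$ onto $\mathcal{K}=(\under{S}{G})\tensor[_\alpha]{\ltimes}{}\widehat{B}$. It is a homeomorphism because $\Psi$ is, and since $\under{S}{G}$ is discrete, passing from $\widehat{S}$ to $\widehat{B}$ only affects the second coordinate; it is a groupoid morphism because, by the very definition $\tilde\alpha_{[g]}=\Psi^{-1}\circ\alpha_g\circ\Psi$ in Proposition~\ref{lem:c_trivial_H}, one has $\alpha_{[g]}\circ\Psi=\Psi\circ\tilde\alpha_{[g]}$ for every $[g]$, so $\Theta$ sends composable pairs to composable pairs and respects products, inverses and units. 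Next I would observe that $\tilde\sigma$ is precisely the pullback of the $2$-cocycle $\sigma$ of Lemma~\ref{lem:def:cocycle-on-mcK} along $\Theta$: comparing the defining formula~\eqref{eq:def:tilde-sigma} with that of Lemma~\ref{lem:def:cocycle-on-mcK} and using $\Psi(\tilde\alpha_h(\nu))=\alpha_h(\Psi(\nu))$ gives
\[
    \tilde\sigma\bigl(([g],\tilde\alpha_h(\nu)),([h],\nu)\bigr)
    =\sigma\bigl(([g],\alpha_h(\Psi(\nu))),([h],\Psi(\nu))\bigr)
    =\sigma\bigl(\Theta([g],\tilde\alpha_h(\nu)),\Theta([h],\nu)\bigr).
\]
Consequently $\Theta\times\mathrm{id}_{\mathbb{T}}$ is an isomorphism of topological groupoids from $\mathcal{H}\times_{\tilde\sigma}\mathbb{T}$ onto $\mathcal{K}\times_\sigma\mathbb{T}$ that intertwines the two $\mathbb{T}$-actions.

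Finally I would combine these facts with $\psi$. By Theorem~\ref{thm:general-Sigma}, $\psi\colon\mathcal{K}\times\mathbb{T}\to\Sigma$ is a homeomorphism, and by Theorem~\ref{thm:general-Sigma-twist} it is a groupoid homomorphism $\mathcal{K}\times_\sigma\mathbb{T}\to\Sigma$ intertwining the $\mathbb{T}$-actions; a bijective groupoid homomorphism is automatically a groupoid isomorphism, and its set-theoretic inverse is continuous because $\psi$ is a homeomorphism, so $\psi$ is an isomorphism of topological groupoids. Unwinding the definitions,
\[
    \psi\bigl(\Theta([g],\nu),\lambda\bigr)
    =\psi\bigl([g],\Psi(\nu),\lambda\bigr)
    =\llbracket\alpha_{[g]}(\Psi(\nu)),\lambda\,\delta_{r_{[g]}},\Psi(\nu)\rrbracket
    =\tilde\psi([g],\nu,\lambda),
\]
so $\tilde\psi=\psi\circ(\Theta\times\mathrm{id}_{\mathbb{T}})$ is a composition of isomorphisms of topological groupoids, hence one itself; the second equality in the displayed formula for $\tilde\psi$ is again just $\alpha_{[g]}\circ\Psi=\Psi\circ\tilde\alpha_{[g]}$. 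I expect no serious obstacle here: the substance is entirely in verifying that $\Psi$ intertwines $\alpha$ with $\tilde\alpha$ (immediate from the construction of $\tilde\alpha$) and that $\tilde\sigma$ is the corresponding pullback cocycle (a direct comparison of formulas). The only points requiring a little care are the bookkeeping that upgrades ``$\psi$ is a homeomorphism and a groupoid homomorphism'' to ``$\psi$ is an isomorphism of topological groupoids,'' and keeping straight the mild abuse of notation that identifies $\mathcal{H}$ with $\mathcal{K}$ via $\Psi$.
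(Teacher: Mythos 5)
Your proposal is correct and follows the same route the paper intends: the paper states this corollary without a separate proof, presenting it as a direct rephrasing of Theorems~\ref{thm:general-Sigma} and~\ref{thm:general-Sigma-twist} obtained by transporting everything along $\Psi$, which is exactly what your factorization $\tilde\psi=\psi\circ(\Theta\times\mathrm{id}_{\mathbb{T}})$ makes explicit. Your added care in noting that the domain written as $\mathcal{K}\times_{\tilde\sigma}\mathbb{T}$ is really $\mathcal{H}\times_{\tilde\sigma}\mathbb{T}$ under the identification by $\Psi$, and in upgrading ``homeomorphism plus groupoid homomorphism'' to ``isomorphism of topological groupoids,'' fills in precisely the bookkeeping the paper leaves implicit.
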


\section{Examples} \label{sec:apps}

In this section we briefly examine one of our motivating examples, the irrational rotation algebra, in light of the results of Theorem~\ref{thm:main}. We then apply the work of the previous three sections to compute three different Cartan subalgebras and the associated Weyl groupoids and twists in the case of a specific group $C^*$-algebra. Indeed, many of the simplifications that occur in this setting are due to the fact that the 2-cocycle $c$ is trivial on the subgroups in question, and also on the coset representatives $R(S).$ 
\begin{example}
\label{ex:IRR}

As mentioned in the introduction, the irrational rotation algebra $A_\theta$ can be realized as the twisted group $C^*$-algebra $C_r^*(\mathbb{Z}^2, c_\theta)$ with the 2-cocycle $c_\theta\colon  \mathbb{Z}^2 \times \mathbb{Z}^2 \rightarrow \mathbb{T}$ given by \[c_\theta((n_1, n_2), (m_1, m_2)) = e^{2\pi i \theta n_2m_1}.\]
Notice that if we take $\S \cong \mathbb{Z}$ to be any of the subgroups $  \mathbb{Z} \times \{0\} \cong \{0\} \times \mathbb{Z} \cong \{(n,n) | n \in \mathbb{Z}\}$,  the hypotheses of Theorem~\ref{thm:main} are satisfied. In particular, $c_\theta|_{\S}$ is trivial 
using either of the first two choices for $\S$, so we see that $C_r^*(\mathbb{Z}, c) \cong C_r^*(\mathbb{Z}) \cong C(\mathbb{T})$ is Cartan in $C_r^*(\mathbb{Z}^2, c) \cong A_\theta$. 
Furthermore, a straightforward computation reveals that the action of $\mathbb{Z}^2/\S \cong \mathbb{Z}$ 
on $\mathbb{T}$ is given by  
\[ n \cdot z = e^{2\pi i \theta n} z ,\] 
so that the Weyl groupoid is the usual transformation groupoid $\mathbb{T} \rtimes_\theta \mathbb{Z}.$
Moreover, for all of the subgroups $\S$ identified above, coset representatives $R(\S)$ can be chosen so that $\delta_{r_{[gh]}}^* \delta_{r_{[g]}} \delta_{r_{[h]}} = \delta_e$.  Consequently, the twist $\Sigma$ on the Weyl groupoid is trivial, and Theorem \ref{thm:general-untwist-case} gives us the standard isomorphism $C^*_r(\mathbb{Z}^2, c_\theta) \cong C^*_r(\mathbb{T} \rtimes_\theta \mathbb{Z}).$
\end{example}

{We next apply the results of Section \ref{sec:anna} to a group and 2-cocycle that arose in \cite[Example 8.8]{packer-proj-rep}.  We identify three subgroups $S_0, S_1, S_2$ which give rise to different Cartan subalgebras and different Weyl groupoids.  Indeed, it turns out that the Weyl twist associated to $S_0$ is trivial, but this is not the case for $S_1$ or $S_2$.

While the existence of multiple non-isomorphic Cartan subalgebras in a given $C^*$-algebra is not an uncommon occurrence (cf.~\cite{deeley-putnam-strung}), the novelty of the examples highlighted in this section is the dynamical origin of these subalgebras, arising as they do from subgroups of the initial group.
}

    {For the remainder of this section, l}et $G$ be the discrete group consisting, as a set, of the cartesian product $\mathbb Z^5$ with the following group operation: 
    \begin{align*}
        \mathbf a \cdot \mathbf{b}
        =&\,
        (a_1, a_2, a_3, a_4, a_5)\cdot(b_1,b_2,b_3,b_4,b_5)
        \\:=&\,
        (a_1 + b_1 + 2a_5b_3, a_2+b_2+2a_5b_4, a_3+b_3, a_4+b_4, a_5+b_5)
        .
    \end{align*}
    The inverse of $\mathbf{a}$ is then given by
    \[
        \mathbf{a}\inv = (2a_3a_5-a_1, 2a_4a_5-a_2, -a_3, -a_4, -a_5)
        .
    \]
    Equip $G$ with the following 2-cocycle:
    \[
        c(\mathbf{a},\mathbf{b}) = (-1)^{a_4b_1}
        .
    \]
   The following are three examples of subgroups of $\Iso(G)=G$  each falling into the scope of Theorem \ref{thm:main}:
    \begin{align*}
        S_{0} := \mathbb{Z} \times  \mathbb{Z} \times \{0\}\times  \{0\} \times  \mathbb{Z} 
        , \\
        S_{1} := \mathbb{Z} \times  \mathbb{Z} \times  \mathbb{Z} \times  2\mathbb{Z} \times  \{0\}
        ,
        \\
        S_{2} := 2 \mathbb{Z} \times  \mathbb{Z} \times  \mathbb{Z} \times  \mathbb{Z} \times  \{0\}
        .
    \end{align*}
    \begin{enumerate}
        \item They are subgroups:
        \begin{alignat*}{3}
            &S_{0}:\quad&&
                (a_1,a_2,0,0,a_5)\cdot(b_1,b_2,0,0,b_5)\\
            &   && \qquad = (a_1+b_1, a_2+b_2, 0, 0, a_5+b_5)
                \in S_{0}
                \\
            &   &&
                (a_1,a_2,0,0,a_5)\inv
                =
                (-a_1, -a_2, 0, 0, -a_5)
                \in S_{0}
                \\
            &S_{1}:\quad&&
                (a_1,a_2,a_3,2a_4,0)\cdot(b_1,b_2,b_3,2b_4,0)\\
            &   && \qquad  =
                (a_1+b_1, a_2+b_2, a_3+b_3, 2(a_4+b_4), 0)
                \in S_{1}
                \\
            &   &&
                (a_1,a_2,a_3,2a_4,0)\inv
                =
                (-a_1, -a_2, -a_3, -2a_4, 0)
                \in S_{1}
            \\
            &S_{2}:\quad&&
               (2a_1,a_2,a_3,a_4,0)\cdot(2b_1,b_2,b_3,b_4,0)\\
            &   && \qquad
                =
                (2(a_1+b_1), a_2 + b_2, a_3 + b_3, a_4 + b_4, 0)
                \in S_{2}
                \\
            &   &&
                (2a_1,a_2,a_3,a_4,0)\inv
                =
                (-2a_1, -a_2, -a_3, -a_4, 0)
                \in S_{2}
        \end{alignat*}
           We also immediately see from this that all three are abelian.
        \item The 2-cocycle is trivial on them (not just symmetric):  For $S_0$, it follows from the fourth coordinate being zero. For $S_1$ resp.\ $S_2$, this follows from the evenness of the fourth resp.\ first coordinate.
        \item They are maximal among abelian subgroupoids on which $c$ is symmetric: For $S_0$, we note that, since the last component is all of $\mathbb{Z}$, we need the third and fourth component to be zero for the subgroup to be abelian (which immediately forces the cocycle to be trivial). For $S_{1}$ resp.\ $S_{2}$, allowing an odd number in the fourth \emph{and} first component would make the cocycle non-symmetric, and allowing the last component to be non-trivial would make the subgroups non-abelian. 
        \item They are normal: For $\mathbf{g}, \mathbf{s}\in G$, we have
        \begin{align}
            &\mathbf g\inv \mathbf s\mathbf g = (2g_3g_5-g_1, 2g_4g_5-g_2, -g_3, -g_4, -g_5)
            \notag\\
            &\hphantom{\mathbf g\inv \mathbf s\mathbf g =} \quad  \cdot( s_1 + g_1 + 2s_5g_3, s_2 + g_2 + 2s_5g_4, s_3 + g_3, s_4 + g_4, s_5 + g_5)
            \notag\\
            & = (s_1 + 2s_5g_3 - 2g_5s_3, s_2 + 2s_5g_4 - 2g_5s_4, s_3, s_4, s_5)
            \notag\\
            \begin{split}\label{eq:g-inv-s-g-computed}
             & = {\left\{\begin{array}{ll}
     (s_1 + 2s_5g_3, s_2 + 2s_5g_4, 0, 0, s_5) \in S_0, & \text{ if } \mathbf s \in S_0 \\
     (s_1 - 2g_5s_3, s_2 - 2g_5s_4, s_3, s_4, 0)\in S_i, &  \text{ if } \mathbf s \in S_i\, (i = 1,2). \\
\end{array}\right.}\end{split}
        \end{align}
      
        \item They are immediately centralizing, because $G$ has the unique root property (cf.\ Remark \ref{rmk:Examples-for-(A)}): For any element $\mathbf{g}=(g_1, g_2, g_3, g_4, g_5) \in G$ and $j$ a positive integer, we have
        \[
            \mathbf{g}^{ j}
            =
            (jg_1 +j(j-1)g_5g_3, jg_2+j(j-1)g_5g_4, jg_3, jg_4, jg_5)
            .
        \]
        In particular, we see that $\mathbf{g}^{ j} = \mathbf{h}^{ j}$ implies $\mathbf{g}=\mathbf{h}$.
        \item Since $G$ has the discrete topology, all three are clopen.
    \end{enumerate}
    
   Note that $S_{0}\cong \mathbb{Z}^3$ while  $S_{1}\cong \mathbb{Z}^4 \cong S_2$, so in particular, the Cartan algebras they generate are
    \[
        C_r^*(S_{0}) \cong C(\mathbb{T}^3) \ncong C(\mathbb{T}^4) \cong C_r^* (S_{1}) \cong C^*_r(S_2).
    \]

    Next, we use the machinery developed in Section \ref{sec:anna} to identify the  Weyl groupoids and twists $(\H_i, \Sigma_{i})$ that give rise to
    \[
        \bigl(
            C_r^*(\H_i, \Sigma_{i}), 
            C_0 (\H_i\z)
        \bigr)
        \cong 
        \bigl(
            C_r^*(G, c), 
            C_r^* (S_i)
        \bigr)
        .
    \]


{We begin with $S_{0}$. Since  $c|_{S_0}$ is trivial,  we are in the setting of Corollary \ref{cor:general-Sigma-c-trivial-on-S}. The following Proposition describes the action ${\tilde{\alpha}^{0}}$ on $\H_{0}$ explicitly, and shows that the 2-cocycle $\tilde \sigma$ is trivial in this case.}
\begin{prop}\label{prop:Weyl-H3}
    Let $\H_0 =  \mathbb{Z}^2 \tensor[_{{\tilde{\alpha}^{0}}}]{\ltimes}{}\mathbb{T}^3$, 
    where for $(c,d)\in\mathbb{Z}^2$, $${\tilde{\alpha}^{0}}_{(c,d)} (z_1, z_2, z_3) =
        ( (-1)^d z_1, z_2, z_1^{2c} z_2^{2d} z_3).$$  Then $\H_0$ is the Weyl groupoid associated to the Cartan pair $(C_r^*(G, c), C_r^*(S_0))$, and $(C^*_r(\H_0), C_0(\H_0\z)) \cong (C_r^*(G, c), C^*_r(S_0)).$  That is, the twist associated to this Cartan pair is trivial. 
\end{prop}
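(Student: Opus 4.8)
The plan is to leverage the fact that $c$ is trivial on $S_0$ (its fourth coordinate vanishes), so that Proposition~\ref{lem:c_trivial_H} and Corollary~\ref{cor:general-Sigma-c-trivial-on-S} apply, together with a well-chosen set of coset representatives. First I would observe that $\under{S_0}{G}\cong\mathbb{Z}^2$ via $[\mathbf g]\mapsto(g_3,g_4)$, and that $R:=\{\mathbf r_{(c,d)}:=(0,0,c,d,0):(c,d)\in\mathbb{Z}^2\}$ is simultaneously a transversal for $S_0$ in $G$ — indeed $\mathbf r_{(g_3,g_4)}\inv\mathbf g=(g_1,g_2,0,0,g_5)\in S_0$ — and an honest subgroup of $G$, since the group law gives $\mathbf r_{(c,d)}\mathbf r_{(c',d')}=\mathbf r_{(c+c',d+d')}$. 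I would fix $R(S_0):=R$ throughout and identify $\widehat B\cong\widehat{S_0}\cong\mathbb{T}^3$ via the pairing $\nu(a_1,a_2,0,0,a_5)=z_1^{a_1}z_2^{a_2}z_3^{a_5}$.

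Next I would compute the induced action from the formula in Proposition~\ref{lem:c_trivial_H}, evaluating on the representative $\mathbf g=\mathbf r_{(c,d)}$ and on $\mathbf s=(s_1,s_2,0,0,s_5)\in S_0$. A direct calculation of the cocycle terms gives $c(\mathbf g,\mathbf g\inv)=1$, $c(\mathbf g\inv,\mathbf s)=(-1)^{d s_1}$, and $c(\mathbf g\inv\mathbf s,\mathbf g)=1$, while Equation~\eqref{eq:g-inv-s-g-computed} gives $\mathbf g\inv\mathbf s\mathbf g=(s_1+2cs_5,\,s_2+2ds_5,\,0,0,\,s_5)$. Substituting and collecting the exponents of $s_1,s_2,s_5$ shows $\tilde\alpha^0_{(c,d)}(\nu)(\mathbf s)=\big((-1)^d z_1\big)^{s_1}z_2^{s_2}\big(z_1^{2c}z_2^{2d}z_3\big)^{s_5}$, which is exactly the action claimed in the statement. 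To invoke Theorem~\ref{lem:class-of-g-determines-equality} (via Proposition~\ref{lem:c_trivial_H}) and conclude that $\H_0$ is the Weyl groupoid, it then remains to check topological freeness of $\tilde\alpha^0$. For $(c,d)\neq(0,0)$ the fixed-point set of $\tilde\alpha^0_{(c,d)}$ in $\mathbb{T}^3$ is empty if $d$ is odd (since $(-1)^d z_1=z_1$ has no solution on $\mathbb{T}$) and otherwise equals $\{(z_1,z_2,z_3):z_1^{2c}z_2^{2d}=1\}$, a proper closed subset with empty interior because $(c,d)\neq(0,0)$. Since a finite union of such sets is nowhere dense, its complement is dense, and topological freeness follows.

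Finally, the triviality of the twist is immediate from the choice $R(S_0)=R$: because $R$ is a subgroup and $c$ vanishes on $R$, in $C^*_r(G,c)$ we have $\delta_{\mathbf r_{[g]}}\delta_{\mathbf r_{[h]}}=\delta_{\mathbf r_{[gh]}}$, hence $\delta_{\mathbf r_{[gh]}}^*\delta_{\mathbf r_{[g]}}\delta_{\mathbf r_{[h]}}=\delta_e$, which under Gelfand duality is the constant function $1$ on $\widehat{S_0}$. Thus the cocycle $\tilde\sigma$ of Equation~\eqref{eq:def:tilde-sigma} is identically $1$, so Corollary~\ref{cor:general-Sigma-c-trivial-on-S} identifies $\Sigma_0$ with the trivial twist $\H_0\times\mathbb{T}$, and Theorem~\ref{thm:general-untwist-case} then yields $(C^*_r(G,c),C^*_r(S_0))\cong(C^*_r(\H_0),C_0(\H_0\z))$. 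I expect the main obstacle to be nothing deep but rather the careful $\mathbb{T}$-valued cocycle bookkeeping in the second step; the genuinely load-bearing observation is simply that the transversal $R$ can be chosen to be a subgroup of $G$, and it is precisely this that forces $\tilde\sigma$ to vanish.
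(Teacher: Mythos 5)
Your proposal is correct and follows essentially the same route as the paper: the same transversal $R(S_0)=\{(0,0,c,d,0)\}$, the same cocycle computations feeding into Proposition~\ref{lem:c_trivial_H}, and the same observation that $R(S_0)$ is a subgroup on which $c$ is trivial, forcing $\delta_{r_{[\mathbf{gh}]}}^*\delta_{r_{[\mathbf g]}}\delta_{r_{[\mathbf h]}}=\delta_{\mathbf e}$ and hence $\tilde\sigma\equiv 1$. The one point where you go beyond the paper's written proof is your explicit verification that $\tilde\alpha^0$ is topologically free (empty fixed-point set when $d$ is odd, a nowhere dense subtorus condition $z_1^{2c}z_2^{2d}=1$ otherwise); this hypothesis is needed to invoke Theorem~\ref{lem:class-of-g-determines-equality}, and the paper checks it for $S_1$ and $S_2$ but leaves it implicit for $S_0$, so your inclusion of it is a genuine improvement rather than a deviation.
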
 

{
\begin{proof}
    For $\mathbf g = (g_1, g_2, g_3, g_4, g_5) \in G,$ let $r_{[\mathbf g]} = (0, 0, g_3, g_4, 0)$ be the representative of $[\mathbf g] \in G/S_0$.  Observe that if $\mathbf s = (s_1, s_2, 0, 0, s_5) \in S_0$ we have 
    \begin{align*} 
    \overline{c(r_{[\mathbf g]}, r_{[\mathbf g^{-1}]})} &= c(r_{[\mathbf g^{-1} \mathbf s]}, r_{[\mathbf g]}) = 1, \qquad c(r_{[\mathbf g^{-1}]}, \mathbf s) = (-1)^{g_4 s_1} , \qquad \text{ and } \\
        r_{[\mathbf g^{-1}]}  \mathbf s r_{[\mathbf g]}&  = (s_1 + 2 s_5 g_3, s_2 + 2s_5 g_4, 0, 0, s_5).
    \end{align*}
    Moreover, $\widehat{S_0} \cong \mathbb{T}^3$, with the pairing of $\widehat{S_0}$ with $\mathbb{T}^3$  given by 
    \[ \langle (s_1, s_2, 0, 0, s_5) , (z_1, z_2, z_3) \rangle_0 := z_1^{s_1} z_2^{s_2} z_3^{s_5}.\]
    Proposition \ref{lem:c_trivial_H} therefore tells us that the action $\tilde{\alpha}^{0}$ of $G/S_0 \cong \mathbb{Z}^2$ on $\mathbb{T}^3$ which gives rise to the Weyl groupoid is 
    \begin{align*} {\tilde{\alpha}^{0}}_{[g_3, g_4]}(z_1, z_2, z_3) & = \left[ (s_1, s_2, 0, 0, s_5) \mapsto (-1)^{g_4 s_1} z_1^{s_1+ 2s_5 g_3} z_2^{s_2+ 2s_5 g_4} z_3^{s_5} \right] \\
    & = \left[ (s_1, s_2, 0, 0, s_5) \mapsto  \left( (-1)^{g_4} z_1\right)^{s_1} z_2^{s_2} \left(z_1^{2g_3} z_2^{2g_4} z_3\right)^{s_5} \right],\end{align*}
    which is the formula for ${\tilde{\alpha}^{0}}$ asserted in the statement of the proposition.
    
   The fact that the Weyl twist is trivial follows from the observation that the set of coset representatives, $R(S_0) = 0 \times 0 \times \mathbb{Z} \times \mathbb{Z} \times 0 $, is also a subgroup of $G$ on which $c$ is trivial, and hence $\delta_{r_{[\mathbf{gh}]}}^* \delta_{r_{[\mathbf g]}} \delta_{r_{[\mathbf h]}} = \delta_{\mathbf{e}}.$ Consequently, the abstract formula for the twist $\tilde{\sigma}$ giving rise to the Weyl twist (see Equation~\eqref{eq:def:tilde-sigma}) is given by
   \[
   \tilde \sigma
   \left(
        ( [\mathbf g], {\tilde{\alpha}^{0}}_{\mathbf h}(\vec z)), ([\mathbf h],\vec{z}) 
    \right)
   = \delta_{\mathbf{e}}(\Psi(\vec{z}))
   = 1
   \]
   for all $\vec{z} \in \mathbb{T}^{3}$ and $\mathbf g, \mathbf h \in G.$  Theorem \ref{thm:general-untwist-case} completes the proof.
\end{proof}}

To compute the Weyl groupoids and twists associated to $S_1$ and $S_2$, we will make use of the function $f: \mathbb Z \to \{ 0,1\}$ given by 
\[f(2k+1) = 1, \ f(2k) = 0.\]
The next proposition deals with the $S_1$ case.

\begin{prop}
The Weyl groupoid associated to the Cartan pair $(C^*_r(G, c), C^*_r(S_1))$ is 
\[
\H_1
= 
 (\under{{S_1}}{G}) \tensor[_{\tilde{\alpha}^1}]{\ltimes}{}\widehat{S}_1  = \left( \mathbb{Z}/2\mathbb{Z} \times \mathbb{Z} \right)\tensor[_{\tilde{\alpha}^1}]{\ltimes}{} \mathbb{T}^4 ,
\]
where the action $\tilde{\alpha}^1$ of $\under{{S_1}}{G}$ on $\mathbb{T}^4$ is given by
\[\tilde{\alpha}^1_{[\bf g]}(z_1, z_2, z_3, z_4) = ((-1)^{g_4} z_1, z_2, z_1^{-2g_5} z_3, z_2^{-4g_5}z_4).\]
The associated Weyl twist is $\H_1 \times_{\tilde \sigma_1 } \mathbb{T},$ where the 2-cocycle $\tilde \sigma_1$ on $\H_1$ satisfies 
\begin{align}
    &\tilde \sigma_1\left(
        (( f (g_4), g_5), \tilde{\alpha}^1_{[\mathbf h]}(\vec{z})), ( ( f (h_4),  h_5), \vec{z})
    \right) =  \frac{ \left( \delta_{r_{[\mathbf g\mathbf h]}}^* \delta_{r_{[\mathbf g]}} \delta_{r_{[\mathbf h]}} \right)(\Psi(\vec{z}))}{\left| \delta_{r_{[\mathbf g\mathbf h]}}^* \delta_{r_{[\mathbf g]}} \delta_{r_{[\mathbf h]}} (\Psi(\vec{z}))\right|}
    \notag 
    \\
    &\qquad = \begin{cases}
    1, &   f (h_4) = 0, \\
    z_2^{2g_5}, &  f (g_4) = 0 \text{ and }  f (h_4) = 1, \\
    z_2^{-2g_5-4h_5}z_4, &  f (g_4) =  f (h_4) = 1.
    \end{cases}
    \label{eq:sigma_1}
\end{align}
\end{prop}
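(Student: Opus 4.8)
The plan is to specialize the general machinery of Section~\ref{sec:anna} to the triple $(G,S_1,c)$, exploiting that $c$ restricts trivially to $S_1$. Concretely, Proposition~\ref{lem:c_trivial_H} will identify the Weyl groupoid with $(\under{S_1}{G})\tensor[_{\tilde{\alpha}^1}]{\ltimes}{}\widehat{S}_1$ once we make $\tilde{\alpha}^1$ explicit, and Corollary~\ref{cor:general-Sigma-c-trivial-on-S} together with Lemma~\ref{lem:def:cocycle-on-mcK} will identify the Weyl twist with $\H_1\times_{\tilde{\sigma}_1}\mathbb{T}$ once we evaluate the cocycle $\tilde{\sigma}_1$ of \eqref{eq:def:tilde-sigma}. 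The first, structural, step is to note that $\mathbf g\mapsto(f(g_4),g_5)$ is a surjective homomorphism $G\to\mathbb{Z}/2\mathbb{Z}\times\mathbb{Z}$ — immediate from $(\mathbf{ab})_4=a_4+b_4$ and $(\mathbf{ab})_5=a_5+b_5$ — with kernel exactly $S_1$, whence $\under{S_1}{G}\cong\mathbb{Z}/2\mathbb{Z}\times\mathbb{Z}$; and that $S_1\cong\mathbb{Z}^4$ via $(s_1,s_2,s_3,2s_4',0)\mapsto(s_1,s_2,s_3,s_4')$, so $\widehat{S}_1\cong\mathbb{T}^4$ under the pairing $\langle(s_1,s_2,s_3,s_4,0),(z_1,z_2,z_3,z_4)\rangle=z_1^{s_1}z_2^{s_2}z_3^{s_3}z_4^{s_4/2}$. (One also checks that $\tilde{\alpha}^1$, hence $\alpha$, is topologically free — $\tilde{\alpha}^1_{[\mathbf g]}$ has empty fixed-point set when $f(g_4)=1$ and a proper closed one when $g_5\neq 0$ — so the hypotheses of Corollary~\ref{cor:general-Sigma-c-trivial-on-S} are met.)

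For the action, I would substitute into the formula of Proposition~\ref{lem:c_trivial_H}, namely $\tilde{\alpha}^1_{[\mathbf g]}(\nu)(\mathbf s)=\overline{c(\mathbf g,\mathbf g\inv)}\,c(\mathbf g\inv,\mathbf s)\,c(\mathbf g\inv\mathbf s,\mathbf g)\,\nu(\mathbf g\inv\mathbf s\mathbf g)$. Using $c(\mathbf a,\mathbf b)=(-1)^{a_4b_1}$ and the formula for $\mathbf g\inv$, the product of the three cocycle factors collapses to $(-1)^{g_4 s_1+g_1 s_4}$, which equals $(-1)^{g_4 s_1}$ since $s_4$ is even for $\mathbf s\in S_1$; feeding in $\mathbf g\inv\mathbf s\mathbf g=(s_1-2g_5s_3,\,s_2-2g_5s_4,\,s_3,\,s_4,\,0)$ from \eqref{eq:g-inv-s-g-computed} and expanding against the chosen pairing gives $\tilde{\alpha}^1_{[\mathbf g]}(\nu)(\mathbf s)=(-1)^{g_4s_1}z_1^{s_1-2g_5s_3}z_2^{s_2-2g_5s_4}z_3^{s_3}z_4^{s_4/2}$. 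Reading off the value of $\tilde{\alpha}^1_{[\mathbf g]}(\nu)$ on each generator of $S_1$ yields $\tilde{\alpha}^1_{[\mathbf g]}(z_1,z_2,z_3,z_4)=((-1)^{g_4}z_1,\,z_2,\,z_1^{-2g_5}z_3,\,z_2^{-4g_5}z_4)$, which visibly depends only on $(f(g_4),g_5)$, as it must.

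For the twist, I would fix coset representatives $R(S_1)=\{(0,0,0,\epsilon,k):\epsilon\in\{0,1\},\,k\in\mathbb{Z}\}$, so $r_{[\mathbf g]}=(0,0,0,f(g_4),g_5)$, and then evaluate \eqref{eq:def:tilde-sigma} by collapsing $\delta_{r_{[\mathbf g\mathbf h]}}^*\delta_{r_{[\mathbf g]}}\delta_{r_{[\mathbf h]}}$ to a single Dirac function in $C^*_r(S_1)$ and evaluating it at $\Psi(\vec z)$. The crux is the computation $r_{[\mathbf g]}r_{[\mathbf h]}=(0,\,2g_5f(h_4),\,0,\,f(g_4)+f(h_4),\,g_5+h_5)$: when $f(h_4)=0$ this coincides with $r_{[\mathbf g\mathbf h]}$, so $\delta_{r_{[\mathbf g\mathbf h]}}^*\delta_{r_{[\mathbf g]}}\delta_{r_{[\mathbf h]}}=\delta_{\mathbf e}$ and $\tilde{\sigma}_1\equiv1$; when $f(h_4)=1$ the element $r_{[\mathbf g\mathbf h]}\inv r_{[\mathbf g]}r_{[\mathbf h]}$ lies in $S_1$ and equals $(0,2g_5,0,0,0)$ if $f(g_4)=0$ and $(0,-2g_5-4h_5,0,2,0)$ if $f(g_4)=1$ — and along the way every $\delta$-product, as well as $\delta_m^*=\overline{c(m\inv,m)}\delta_{m\inv}$, is trivially twisted here because every first or fourth coordinate in sight vanishes. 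Pairing these against $\vec z$ through $\Psi$ (which sends $\delta_{(t_1,t_2,t_3,t_4,0)}$ to $z_1^{t_1}z_2^{t_2}z_3^{t_3}z_4^{t_4/2}$) gives $z_2^{2g_5}$ and $z_2^{-2g_5-4h_5}z_4$ respectively, which is exactly \eqref{eq:sigma_1}; Corollary~\ref{cor:general-Sigma-c-trivial-on-S} then completes the proof.

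The main obstacle is bookkeeping, not insight: because $G$ is nonabelian, the products $r_{[\mathbf g]}r_{[\mathbf h]}$, $r_{[\mathbf g\mathbf h]}\inv$, and $r_{[\mathbf g\mathbf h]}\inv r_{[\mathbf g]}r_{[\mathbf h]}$ must be computed carefully from the group law, and the cocycle factors picked up while collapsing $\delta_{r_{[\mathbf g\mathbf h]}}^*\delta_{r_{[\mathbf g]}}\delta_{r_{[\mathbf h]}}$ to a single Dirac function must be tracked through each of the three cases. A smaller but real point is to use the same identification $\widehat{S}_1\cong\mathbb{T}^4$ in the $\tilde{\alpha}^1$ and $\tilde{\sigma}_1$ computations; fixing the pairing $\langle\cdot,\cdot\rangle$ at the outset makes \eqref{eq:sigma_1} consistent with the stated formula for $\tilde{\alpha}^1$ automatically.
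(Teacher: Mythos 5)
Your proposal is correct and follows essentially the same route as the paper: invoke Proposition \ref{lem:c_trivial_H} with the pairing $\langle(s_1,s_2,s_3,s_4,0),\vec z\rangle = z_1^{s_1}z_2^{s_2}z_3^{s_3}z_4^{s_4/2}$ to read off $\tilde{\alpha}^1$, verify topological freeness via the roots-of-unity argument, and evaluate $\tilde\sigma_1$ by reducing $\delta_{r_{[\mathbf g\mathbf h]}}^*\delta_{r_{[\mathbf g]}}\delta_{r_{[\mathbf h]}}$ to $\delta_{r_{[\mathbf g\mathbf h]}\inv r_{[\mathbf g]}r_{[\mathbf h]}}$ (all intervening cocycle values being $1$) and case-splitting on $f(g_4),f(h_4)$. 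Your intermediate computations, including $r_{[\mathbf g]}r_{[\mathbf h]}=(0,2g_5f(h_4),0,f(g_4)+f(h_4),g_5+h_5)$ and the two values of $r_{[\mathbf g\mathbf h]}\inv r_{[\mathbf g]}r_{[\mathbf h]}$, agree with the paper's.
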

\begin{proof}
    Again, $c|_{S_1 \times S_1}$ is trivial, so we invoke Proposition \ref{lem:c_trivial_H}. {To obtain a concrete formula for the action $\tilde{\alpha}^1$ described in that Proposition,} we first compute
\begin{align*}
    &\overline{c(\mathbf g, \mathbf g\inv)} \, c(\mathbf g\inv, \mathbf s) \, c(\mathbf g\inv \mathbf s, \mathbf g)  = (-1)^{g_4(2g_3g_5 - g_1)}(-1)^{-g_4s_1}(-1)^{(s_4 - g_4)g_1}\\
    &\qquad = (-1)^{g_1s_4+g_4s_1}\\
    &\qquad = \begin{array}{ll}
    (-1)^{g_4s_1} = c(\mathbf g,\mathbf s), & \text{ if } \mathbf s \in S_1 = \mathbb{Z} \times  \mathbb{Z} \times  \mathbb{Z} \times  2\mathbb{Z} \times  \{0\}
    \end{array}
\end{align*}
The fact that the fourth component of $S_1$ is $2\mathbb{Z}$ means that the pairing $\langle \cdot, \cdot \rangle_1$ between $S_1$ and its Pontryagin dual $\widehat{S_1} \cong \mathbb{T}^4$ is given by 
\[  \langle  (s_1, s_2, s_3, s_4, 0), (z_1, z_2, z_3, z_4) \rangle_1 = z_1^{s_1} z_2^{s_2} z_3^{s_3} z_4^{s_4/2}.\]
Thus, evaluating $\tilde{\alpha}^1_{[\mathbf g]}$ at $\nu=(z_1, z_2, z_3, z_4) \in \mathbb{T}^4$ yields
\begin{align*}
    &\tilde{\alpha}^1_{[\mathbf g]}(z_1, z_2, z_3, z_4) = \left[ (s_1, s_2, s_3, s_4,0) \mapsto (-1)^{g_4 s_1} z_1^{s_1 - 2g_5 s_3} z_2^{s_2 - 2g_5 s_4} z_3^{s_3} z_4^{s_4/2} \right] \\&\qquad= \left[ (s_1, s_2, s_3, s_4,0) \mapsto \left((-1)^{g_4} z_1\right)^{s_1}  z_2^{s_2} \left( z_1^{-2g_5} z_3\right)^{s_3} \left(z_2^{ - 4g_5} z_4 \right)^{ s_4/2}  \right],
\end{align*}
i.e.\
\begin{equation}
    \tilde{\alpha}^1_{[\mathbf g]}(z_1, z_2, z_3, z_4) = ((-1)^{g_4} z_1,  z_2,  z_1^{-2g_5}z_3,z_2^{ - 4g_5}z_4 ).
    \label{eq:S_1-action}
\end{equation}
Observe that $\tilde{\alpha}^1$ is topologically free: If a point $\vec z \in \mathbb{T}^4$  is fixed by $\tilde{\alpha}^1_{[\mathbf g]}$ for $\mathbf g \not \in S_1$, we must have $g_4$  even and $z_1, z_2 $ roots of unity.  But then, in any neighborhood of $\vec z$  there are points $\vec w$ for which $w_1, w_2$ are not roots of unity, so there is no neighborhood of $\vec z$ which is fixed by $\tilde{\alpha}^1_{[\mathbf g]}$.  Thus, Theorem \ref{lem:class-of-g-determines-equality} and Proposition \ref{lem:c_trivial_H} tell us that the Weyl groupoid is given by $\H_1 =   (\under{{S_1}}{G})\tensor[_{\tilde{\alpha}^1} ]{\ltimes}{}\widehat{S_1}.$ 

To compute a concrete formula for the 2-cocycle $\tilde \sigma_1$ associated to $S_1$ out of the abstract formula in Equation~\eqref{eq:def:tilde-sigma}, let $R(S_1) = 0 \times 0 \times 0 \times \{ 0,1\} \times \mathbb{Z}$ be our preferred representatives in $G$ of the elements in $\under{ {S_{1}} }{ G }$.  Although $c|_{R(S_1) \times R(S_1)}$ is trivial,  $R(S_1)$ is not a subgroup of $G$.  Indeed, 
\[ r_{[\mathbf g]}^{-1} = (0, 2 f (g_4) g_5, 0, - f (g_4), -g_5)\,
\notin \, R(S_{1}).\]
One then computes that  $1 = c(r_{[\mathbf g \mathbf h]}, r_{[\mathbf g \mathbf h]}^{-1}) = c(r_{[\mathbf g \mathbf h]}^{-1}, r_{[\mathbf g]})  = c(r_{[\mathbf g]},r_{[\mathbf h]})$ and thus that \begin{align*}
    \delta_{r_{[\mathbf g \mathbf h]}}^* \delta_{r_{[\mathbf g]}} \delta_{r_{[\mathbf h]}}
    &=
    \delta_{r_{[\mathbf g \mathbf h]}\inv r_{[\mathbf g]} r_{[\mathbf h]}}.
\end{align*}
Explicitly, we have 
\begin{align*}
    r_{[\mathbf g \mathbf h]}\inv r_{[\mathbf g]} r_{[\mathbf h]}
     =&\,
    \left(0, 2g_5  f (h_4) + 2(g_5+h_5) f (g_4+h_4)- 2(g_5 +h_5)( f (g_4) +  f (h_4)), 
    \right.
    \\
    &\quad \left.
    0,
    - f (g_4+h_4)+  f (g_4) +  f (h_4), 0
    \right).
\end{align*}
If $f(h_4) = 0$, so that $f(g_{4}+h_{4})=f(g_{4})$, then we have $r_{[\mathbf g \mathbf h]}\inv r_{[\mathbf g]} r_{[\mathbf h]} = \mathbf{e}.$ However, if $f(h_4) = 1$, then
\begin{align*}
r_{[\mathbf g \mathbf h]}\inv r_{[\mathbf g]} r_{[\mathbf h]} &=
\begin{cases}
(0, 2g_5, 0, 0, 0), &\text{ if } f(g_4) = 0 ,\text{ and }
\\
(0, -2g_5 - 4 h_5, 0, 2, 0),&\text{ if } f(g_4) = 1.
\end{cases}
\end{align*}
Thus, the 2-cocycle $\tilde \sigma_1$ on $\H_1$ is given by Equation \eqref{eq:sigma_1}, as claimed.
\end{proof}

One might suspect that the symmetry between $S_1$ and $S_2$ would result in the associated Weyl groupoids and twists being isomorphic.  This is not the case, as we now show.
\begin{prop}
The Weyl groupoid associated to the Cartan pair $(C^*_r(G, c), C^*_r(S_2))$ is 
\[
    \H_2 =
    (\under{{S_2}}{G}) \tensor[_{\tilde{\alpha}^2}]{\ltimes}{} \widehat{S}_2
    =
    \left( \mathbb{Z}/2\mathbb{Z} \times \mathbb{Z}\right) \tensor[_{\tilde{\alpha}^2}]{\ltimes}{} \mathbb{T}^4
,\]
where the action $\tilde{\alpha}^2$ of $\under{{S_2}}{G} \cong \mathbb{Z}/2\mathbb{Z} \times \mathbb{Z}$ on $\mathbb{T}^4$ is given by \[\tilde{\alpha}^2_{[\bf g]}(z_1, z_2, z_3, z_4) = ( z_1, z_2, z_1^{-g_5} z_3, (-1)^{g_1}z_2^{-2g_5}z_4).\]
The associated Weyl twist is $\H_2 \times_{\tilde \sigma_2 } \mathbb{T},$ where the 2-cocycle $\tilde \sigma_2$ on $\H_2$ satisfies 
\begin{align}
    &\tilde \sigma_2
    \left(
        ( ( f (g_1), g_5), \tilde{\alpha}^2_{[\mathbf h]}(\vec{z})), ( ( f (h_1),  h_5),\vec{z})
    \right) =   \begin{cases}
    z_1, &   f (g_1) = f(h_1) = 1, \\
  1, &  \text{else.}
    \end{cases}
    \label{eq:sigma_2}
\end{align}
\end{prop}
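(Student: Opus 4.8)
The proof will follow the same three stages as the preceding proposition for $S_1$. Since $c|_{S_2\times S_2}$ is trivial (item~(2) of the list preceding Proposition~\ref{prop:Weyl-H3}), we are again in the setting of Proposition~\ref{lem:c_trivial_H}, and the first task is to obtain a concrete formula for the induced action $\tilde{\alpha}^2$. Throughout I would use the remaining items of that list ($S_2$ is clopen, normal, immediately centralizing, and maximal abelian) together with the identification $\under{S_2}{G}\cong\mathbb Z/2\mathbb Z\times\mathbb Z$, $[\mathbf g]\leftrightarrow(f(g_1),g_5)$, which holds because $[\mathbf g]=[\mathbf h]$ precisely when $g_1\equiv h_1\pmod 2$ and $g_5=h_5$.

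For the action, a direct computation from $c(\mathbf a,\mathbf b)=(-1)^{a_4b_1}$ and the formula for $\mathbf g\inv$ gives $\overline{c(\mathbf g,\mathbf g\inv)}\,c(\mathbf g\inv,\mathbf s)\,c(\mathbf g\inv\mathbf s,\mathbf g)=(-1)^{g_1s_4+g_4s_1}$, which simplifies to $(-1)^{g_1s_4}$ for $\mathbf s\in S_2$ since $s_1$ is even there. Combining this with the expression $\mathbf g\inv\mathbf s\mathbf g=(s_1-2g_5s_3,\,s_2-2g_5s_4,\,s_3,\,s_4,\,0)$ for $\mathbf s\in S_2$ from Equation~\eqref{eq:g-inv-s-g-computed}, and with the pairing $\langle(s_1,s_2,s_3,s_4,0),(z_1,z_2,z_3,z_4)\rangle_2=z_1^{s_1/2}z_2^{s_2}z_3^{s_3}z_4^{s_4}$ between $S_2$ and $\widehat{S_2}\cong\mathbb T^4$ (the exponent $s_1/2$ appears because the first coordinate of $S_2$ is $2\mathbb Z$), the formula in Proposition~\ref{lem:c_trivial_H} regroups into $\tilde{\alpha}^2_{[\mathbf g]}(z_1,z_2,z_3,z_4)=(z_1,\,z_2,\,z_1^{-g_5}z_3,\,(-1)^{g_1}z_2^{-2g_5}z_4)$, as asserted.

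Next I would verify that $\tilde{\alpha}^2$ is topologically free. If $\mathbf g\notin S_2$ then $g_5\neq0$ or $g_1$ is odd. When $g_5\neq0$, a fixed point $\vec z$ of $\tilde{\alpha}^2_{[\mathbf g]}$ forces $z_1^{g_5}=1$, so the fixed-point set lies in a finite union of copies of $\mathbb T^3$ and is nowhere dense; when $g_5=0$ and $g_1$ is odd, $\tilde{\alpha}^2_{[\mathbf g]}$ negates the last coordinate and hence has no fixed point at all. Thus the fixed-point set of any finite subset of $(\under{S_2}{G})\setminus\{e\}$ is nowhere dense, and Theorem~\ref{lem:class-of-g-determines-equality} together with Proposition~\ref{lem:c_trivial_H} identifies the Weyl groupoid with $\H_2$. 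For the twist, I would take the coset representatives $R(S_2)=\{0,1\}\times\{0\}\times\{0\}\times\{0\}\times\mathbb Z$, so $r_{[\mathbf g]}=(f(g_1),0,0,0,g_5)$. As in the $S_1$ case, $c$ is trivial on $R(S_2)$ (its fourth coordinate vanishes) although $R(S_2)$ is not a subgroup, so every cocycle prefactor cancels and $\delta_{r_{[\mathbf g\mathbf h]}}^*\delta_{r_{[\mathbf g]}}\delta_{r_{[\mathbf h]}}=\delta_{r_{[\mathbf g\mathbf h]}\inv r_{[\mathbf g]}r_{[\mathbf h]}}$. A short multiplication gives $r_{[\mathbf g\mathbf h]}\inv r_{[\mathbf g]}r_{[\mathbf h]}=(f(g_1)+f(h_1)-f(g_1+h_1),0,0,0,0)$, which equals $\mathbf e$ unless $f(g_1)=f(h_1)=1$, in which case it equals $(2,0,0,0,0)\in S_2$; evaluating the character corresponding to $\vec z$ at $(2,0,0,0,0)$ with the pairing above produces $z_1\in\mathbb T$. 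Since $|z_1|=1$, the abstract formula~\eqref{eq:def:tilde-sigma} collapses to Equation~\eqref{eq:sigma_2}, and Corollary~\ref{cor:general-Sigma-c-trivial-on-S}, whose hypotheses have now been checked, gives that the Weyl twist is $\H_2\times_{\tilde\sigma_2}\mathbb T$.

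I do not expect a genuine obstacle: the entire argument is bookkeeping that mirrors the $S_1$ computation. The one point demanding care is getting the pairing between $S_2$ and $\widehat{S_2}\cong\mathbb T^4$ right — the factor of two now lives in the first slot rather than the fourth, so the half-integer exponent moves with it — since a slip there contaminates both the action $\tilde{\alpha}^2$ and the cocycle $\tilde\sigma_2$; it is also worth spelling out the case $g_5=0$, $g_1$ odd explicitly, so that topological freeness is genuinely established rather than merely claimed.
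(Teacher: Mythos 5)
Your proposal is correct and follows essentially the same route as the paper: the same cocycle simplification $(-1)^{g_1s_4+g_4s_1}=(-1)^{g_1s_4}$ on $S_2$, the same pairing with the half-integer exponent in the first slot, the same coset representatives $R(S_2)=\{0,1\}\times 0\times 0\times 0\times\mathbb{Z}$, and the same evaluation of $\delta_{r_{[\mathbf g\mathbf h]}\inv r_{[\mathbf g]}r_{[\mathbf h]}}$ at $\Psi(\vec z)$ yielding $z_1$ exactly when $f(g_1)=f(h_1)=1$. Your topological-freeness argument is in fact slightly more complete than the paper's (which only remarks that $z_1,z_2$ must be roots of unity), since you explicitly dispose of the case $g_5=0$, $g_1$ odd, where the action negates the last coordinate and has no fixed points.
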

\begin{proof} 
Again, we begin by computing for $\mathbf g \in G$ and $\mathbf s \in S_2 = 2\mathbb{Z} \times \mathbb{Z} \times \mathbb{Z} \times  \times \mathbb{Z} \times \{ 0 \} $:
\begin{align*}
    \overline{c(\mathbf g, \mathbf g\inv)} \, c(\mathbf g\inv, \mathbf s) \, c(\mathbf g\inv \mathbf s, \mathbf g) & = (-1)^{g_4(2g_3g_5 - g_1)}(-1)^{-g_4s_1}(-1)^{(s_4 - g_4)g_1}\\
    &=   (-1)^{g_1s_4} = c(\mathbf s,\mathbf g).
\end{align*}
The pairing $\langle \cdot, \cdot \rangle_2$ between $S_2$ and $\mathbb{T}^4$ is given by 
\[ \langle ( s_1, s_2, s_3, s_4, 0), (z_1, z_2, z_3, z_4) \rangle_2 = z_1^{s_1/2} z_2^{s_2} z_3^{s_3} z_4^{s_4},\]
and so the fact that $\mathbf{g}^{-1} \mathbf{s} \mathbf{g} = (s_1 - 2g_5 s_3, s_2 - 2g_5 s_4, s_3, s_4, 0)$ implies 
\begin{align*}
    &\tilde{\alpha}^2_{[\mathbf g]}(z_1, z_2, z_3, z_4) = \left[ (s_1, s_2, s_3, s_4,0) \mapsto (-1)^{g_1 s_4} z_1^{s_1/2 - g_5 s_3} z_2^{s_2 - 2g_5 s_4} z_3^{s_3} z_4^{s_4}\right] \\&\qquad= \left[ (s_1, s_2, s_3, s_4,0) \mapsto z_1^{s_1/2} z_2^{s_2} \left( z_1^{-g_5} z_3\right)^{s_3} \left( (-1)^{g_1} z_2^{-2g_5} z_4\right)^{s_4} \right].
\end{align*} 
Consequently, 
\begin{equation}
\tilde{\alpha}^2_{[\mathbf g]}(z_1, z_2, z_3, z_4) = (z_1, z_2, z_1^{-g_5} z_3, (-1)^{g_1} z_2^{-2g_5} z_4).
    \label{eq:S_2-action}
\end{equation}
Again, the action $\tilde{\alpha}^2$ is topologically free, because if $\tilde{\alpha}^2_{[\mathbf g]}(\vec z) = \vec z$ and $\mathbf g \not \in S_2$,  $z_1$ and $z_2$ must be roots of unity.  Since every neighborhood of such a point contains points $\vec w$ with $w_1 \not=1$, $\tilde{\alpha}^2$ is topologically free.

We now compute $\tilde \sigma_2$, using $R(S_2) = \{ 0 ,1 \} \times 0 \times 0 \times 0 \times \mathbb{Z},$ so that 
\[ r_{[\mathbf g]} = ( f (g_1), 0, 0, 0, g_5) \qquad \text{ and } \qquad r_{[\mathbf g]}^{-1} = (- f (g_1), 0, 0, 0, -g_5) .\]
{Note that the latter might not be an element of $R(S_{2})$.}
As in the $S_1$ case, $1 = c(r_{[\mathbf g \mathbf h]}, r_{[\mathbf g \mathbf h]}^{-1}) = c(r_{[\mathbf g \mathbf h]}^{-1}, r_{[\mathbf g]}) =c(r_{[\mathbf g]}, r_{[\mathbf h]})$, and so
\begin{align}
    \tilde{\sigma_2}
    \bigl( 
        (  [\mathbf g],  \tilde{\alpha}^2_{[\mathbf h]}(\vec z)  ) , ([\mathbf h], \vec z)
    \bigr)
  &= \delta_{r_{[\mathbf g \mathbf h]}^{-1} r_{[\mathbf g]} r_{[\mathbf h]}}(\Psi(\vec z)) \notag \\
  &= \delta_{( f (g_1) +  f (h_1) -  f (g_1+h_1), 0, 0, 0, 0)}(\Psi(\vec z)) \notag \\
  &= \begin{cases}
  z_1, &  f (g_1) =  f (h_1) = 1, \\
  1, & \text{ else.}
  \end{cases} \notag
\end{align}
{This concludes our proof.}
\end{proof}

\section{The necessity of being immediately centralizing}\label{ex:masa-counterexample} 
    The following pathological group was constructed to establish the necessity of the  ``immediately centralizing'' hypothesis in Theorem \ref{thm:main}.

	Let $ G $ be the set $\mathbb{Z}/4\mathbb{Z} \times \mathbb{Z}/4\mathbb{Z} \times \mathbb{Z} \times \mathbb{Z} \times \mathbb{Z} /4\mathbb{Z} $, and define multiplication on $ G $ by
		\begin{multline*}
		([a]_4,[b]_4,c,d,[e]_4) \cdot ([a']_4,[b']_4,c',d',[e']_4)
		\\
		= ([a+a'+2ec']_4,[b+b'+2ed']_4,c+c',d+d',[e+e']_4).
		\end{multline*}
		One can check that $ G $ is a group with inverse given by
		\begin{equation*}
			([a]_4,[b]_4,c,d,[e]_4)\inv = ([2ec-a]_4,[2ed-b]_4,-c,-d,[-e]_4).
		\end{equation*}
		
		Define $c\colon   G  \times  G  \to \mathbb{T}$ by 
		\[
			c(([a]_4 ,[b]_4 ,c,d,[e]_4 ), ([a']_4 ,[b']_4 ,c',d',[e']_4 )) = (-1)^{da'}.
		\]
		Then $c$ is a $2$-cocycle on $ G $ because $c(\mathbf a, \mathbf 0) = c(\mathbf 0, \mathbf a) = 1$ for all $\mathbf a \in  G $ and 
		\begin{equation*}
			c(\mathbf a_1, \mathbf a_2)
			c(\mathbf a_1 \mathbf a_2, \mathbf a_3) = (-1)^{d_1 a_2} (-1)^{(d_1+d_2)a_3} 
		\end{equation*}
		which equals
		\begin{equation*}
			c(\mathbf a_1, \mathbf a_2 \mathbf a_3) c(\mathbf a_2, \mathbf a_3)
			=
			(-1)^{d_1 (a_2+a_3+2e_2c_3)} (-1)^{d_2 a_3}
			=
			(-1)^{d_1a_2 + d_1 a_3} (-1)^{d_2 a_3}.
		\end{equation*}
		
		Now define $S=\mathbb{Z}/4\mathbb{Z} \times \mathbb{Z}/4\mathbb{Z} \times \mathbb{Z} \times 2\mathbb{Z} \times \{[0]_4,[2]_4\}$.
		
		\begin{prop}
			$S$ is a subgroup that is maximal among abelian subgroups on which $c$ is symmetric.
		\end{prop}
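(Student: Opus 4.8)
The plan is to dispatch the subgroup and abelianness statements by direct computation, and then to prove maximality by showing that any element of an abelian subgroup $T \supseteq S$ on which $c$ is symmetric must already lie in $S$. For the subgroup and abelianness claims, the key observation is that the fifth coordinate of any element of $S$ is even, so the ``twisted'' terms $2ec'$ and $2ed'$ occurring in the first two coordinates of a product $\mathbf s\cdot\mathbf s'$ of elements of $S$ both vanish modulo $4$; hence multiplication restricted to $S$ is just coordinatewise addition in $\mathbb Z/4\mathbb Z \times \mathbb Z/4\mathbb Z \times \mathbb Z \times 2\mathbb Z \times \{[0]_4,[2]_4\}$, from which closure, existence of inverses, and commutativity are all immediate. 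Similarly, since the fourth coordinate $d$ of any element of $S$ is even, $c(\mathbf s,\mathbf s') = (-1)^{da'} = 1$ for all $\mathbf s,\mathbf s'\in S$, so in fact $c$ is trivial on $S$, not merely symmetric.

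For maximality, suppose $T$ is an abelian subgroup of $G$ with $S\subseteq T$ on which $c$ is symmetric, and let $\mathbf g\in T$; write $d$ and $[e]_4$ for its fourth and fifth coordinates. Because the first three factors of $S$ are the full factors $\mathbb Z/4\mathbb Z$, $\mathbb Z/4\mathbb Z$, $\mathbb Z$ of $G$, membership $\mathbf g\in S$ is equivalent to ``$d$ even'' together with ``$[e]_4 \in \{[0]_4,[2]_4\}$'', so it suffices to rule out the two ways these can fail. Comparing $\mathbf g\mathbf t$ and $\mathbf t\mathbf g$ for $\mathbf t := ([0]_4,[0]_4,1,0,[0]_4)\in S\subseteq T$, a one-line computation shows they agree in every coordinate except the first, where they differ by $[2e]_4$; since $T$ is abelian, $2e\equiv 0\pmod 4$, i.e.\ $e$ is even. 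Comparing $c(\mathbf g,\mathbf t')$ and $c(\mathbf t',\mathbf g)$ for $\mathbf t' := ([1]_4,[0]_4,0,0,[0]_4)\in S\subseteq T$, we get $c(\mathbf g,\mathbf t') = (-1)^{d}$ while $c(\mathbf t',\mathbf g) = (-1)^{0} = 1$; since $c$ is symmetric on $T$, $d$ is even. Hence $\mathbf g\in S$, and therefore $T = S$.

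All of this is elementary, and I expect the only subtlety to be organizational: the two ways of escaping $S$ must be closed by two genuinely different obstructions --- an odd fifth coordinate is detected by the failure of commutativity coming from the $2ec'$ term in the multiplication law, while an odd fourth coordinate is detected by the asymmetry of the cocycle $(-1)^{da'}$ --- and one should not forget to check that the test elements $\mathbf t$ and $\mathbf t'$ really do belong to $S$. It is likewise worth confirming that $S$ is cut out in $G$ precisely by those two parity conditions, which again uses that the first three coordinate-factors of $S$ equal those of $G$.
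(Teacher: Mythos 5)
Your proof is correct and follows essentially the same route as the paper: the vanishing of the $2ec'$, $2ed'$ terms (fifth coordinate even) gives that $S$ is an abelian subgroup on which $c$ is trivial, and maximality is forced because an odd fifth coordinate breaks commutativity while an odd fourth coordinate breaks symmetry of $c$. Your version merely makes the paper's maximality step more explicit by exhibiting the concrete test elements $\mathbf t$ and $\mathbf t'$, which is a harmless (indeed helpful) elaboration.
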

		
		\begin{proof}
			Suppose $([a]_4 ,[b]_4 ,c,d,[e]_4 ), ([a']_4 ,[b']_4 ,c',d',[e']_4 ) \in S$. Then
			\begin{align*}
			&
			([a]_4 ,[b]_4 ,c,d,[e]_4 ) \cdot ([a']_4 ,[b']_4 ,c',d',[e']_4 )
			\\
			&\qquad = ([a+a'+2ec']_4,[b+b'+2ed']_4,c+c',d+d',[e+e']_4)
			\\
			&\qquad = ([a+a']_4,[b+b']_4, c+c', d+d', [e+e']_4),
			\end{align*}
			since $e$ is even so $[2ec']_4=[2ed']_4=0$. Similarly,
			\begin{align*}
			&([a']_4 ,[b']_4 ,c',d',[e']_4 ) \cdot ([a]_4 ,[b]_4 ,c,d,[e]_4 )
			\\
			&\qquad = ([a'+a+2e'c]_4,[b'+b+2e'd]_4,c'+c,d'+d,[e'+e]_4)
			\\
			&\qquad = ([a'+a]_4, [b'+b]_4, c'+c, d'+d, [e'+e]_4).
			\end{align*}

			The 2-cocycle is trivial on $S$ because the fourth component is even. 
			
			Regarding maximality, assume $T$ is a subgroup of $ G $ that contains $S$. If $T$ contains an element whose fourth component is odd, then $c$ is not symmetric on $T$. If $T$ contains an element whose fifth component is odd, then $T$ is not abelian. Thus $S$ is maximal among abelian subgroups of $ G $ on which $c$ is symmetric.
		\end{proof}

		\begin{prop}
			$G$ and $S$ satisfy all other assumptions of Theorem \ref{thm:main}, except that $S$ is not immediately centralizing. 
		\end{prop}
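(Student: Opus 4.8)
The plan is to first dispose of the hypotheses that follow from discreteness, then verify normality by a single conjugation computation, and finally exhibit one element of $G$ that witnesses the failure of the ``immediately centralizing'' property.

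Since $G$ is a countable discrete group, it is automatically a second countable, locally compact, Hausdorff, \'etale groupoid with $\Iso(G)=G$, and every subset of $G$ — in particular $S$ — is clopen; that $c$ is a $2$-cocycle was checked above, and the preceding proposition records that $S$ is maximal among abelian subgroups of $G$ on which $c$ is symmetric. So the only substantive points remaining are normality of $S$ and the failure of the immediately centralizing condition. For normality I would compute $g s g\inv$ directly, for $g=([a]_4,[b]_4,c,d,[e]_4)\in G$ and $s=([a']_4,[b']_4,c',d',[e']_4)\in S$, using the given formulas for multiplication and inversion. A short calculation yields
\[
  g s g\inv = \bigl([\,a'+2ec'-2e'c\,]_4,\ [\,b'+2ed'-2e'd\,]_4,\ c',\ d',\ [e']_4\bigr).
\]
The third, fourth, and fifth coordinates are unchanged, and the first two coordinates are unconstrained in $S$ (they range over all of $\mathbb{Z}/4\mathbb{Z}$), so the conditions $s\in S$ — namely $d'$ even and $[e']_4\in\{[0]_4,[2]_4\}$ — force $g s g\inv\in S$. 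Hence $S$ is normal.

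For the last assertion I would first determine exactly when an element $\nu=([\nu_1]_4,[\nu_2]_4,\nu_3,\nu_4,[\nu_5]_4)$ of $G$ commutes with an element $u=([u_1]_4,[u_2]_4,u_3,u_4,[u_5]_4)\in S$. Comparing $\nu u$ with $u\nu$ coordinatewise and using that $u_4$ and $u_5$ are even whenever $u\in S$, one finds that the fourth and fifth coordinates always agree, the second coordinates always agree, and the first coordinates agree if and only if $2\nu_5 u_3\equiv 2u_5\nu_3\pmod 4$; since $u_5$ is even the right-hand side is $\equiv 0$, so $\nu u=u\nu$ if and only if $\nu_5 u_3$ is even. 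Now take $\nu=([0]_4,[0]_4,0,0,[1]_4)$. Since $(0,0,1,0,0)\in S$ has odd third coordinate and $\nu_5=1$ is odd, the criterion shows $\nu$ fails to commute with $(0,0,1,0,0)$, so $\nu\notin C_1(S)$. On the other hand, for any $t\in S$ the square $t^2$ has third coordinate $2t_3$, which is even, so $\nu t^2=t^2\nu$ for every $t\in S$; that is, $\nu\in C_2(S)$. Therefore $C_2(S)\neq C_1(S)$, and $S$ is not immediately centralizing.

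The computations involved are all elementary; the only delicate point — and the structural reason the example works — is keeping the modular arithmetic straight in the commuting criterion, and noticing that passing from $t$ to $t^2$ destroys the relevant parity obstruction while nothing forces it to vanish for $t$ itself. I therefore expect no real obstacle, only the bookkeeping needed to present the conjugation formula and the commutation criterion cleanly.
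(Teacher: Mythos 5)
Your proof is correct and follows essentially the same route as the paper: discreteness handles the topological hypotheses, a direct conjugation computation (showing the fourth and fifth coordinates are preserved) gives normality, and the same witness $\nu=([0]_4,[0]_4,0,0,[1]_4)$ shows $C_2(S)\neq C_1(S)$, since $\nu$ fails to commute with elements of $S$ having odd third coordinate while every square in $S$ has even third coordinate. The only cosmetic difference is that you package the last step as a general commutation criterion ($\nu u=u\nu$ iff $\nu_5u_3$ is even) rather than the paper's explicit computation of $\mathbf{g}\mathbf{s}\mathbf{g}^{-1}$ and $\mathbf{g}\mathbf{s}^2\mathbf{g}^{-1}$.
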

		
		\begin{proof}
		    Since $G$ is a countable group with the discrete topology, $G$ is a second countable locally compact Hausdorff \'etale groupoid and $S$ is open and closed. To see that $S$ is normal, we must check that $\mathbf{a}\inv \mathbf{s}\mathbf{a} \in S$ for all $\mathbf{a} \in G$ and $\mathbf{s} \in S$.  Note that the fourth component of $\mathbf{a}\inv \mathbf{s}\mathbf{a}$ equals the fourth component of $\mathbf{s}$, and the fifth component of $\mathbf{a}\inv \mathbf{s}\mathbf{a}$ equals the fifth component of $\mathbf{s}$.  It therefore follows that if $\mathbf s \in S, \mathbf a \in G$, then $\mathbf{a}\inv \mathbf{s}\mathbf{a}\in S$. Thus $S$ is normal.
		    
		    {To see that $S$ is not immediately centralizing, consider $\mathbf g = ([0]_4, [0]_4, 0, 0, [1]_4).$  Observe that if $\mathbf s = ([a]_4, [b]_4, c, 2d, [e]_4)\in S$ then 
		    \[\mathbf g \mathbf s \mathbf g^{-1} = ([a +2c]_4, [b]_4, c, 2d, [e]_4),\]
		    while $\mathbf g \mathbf s^2 \mathbf g^{-1} = ([2a]_4, [2b]_4, 2c, 4d, [2e]_4) = \mathbf s^2. $
		    Since there are elements $\mathbf s \in S$ for which $([a +2c]_4, [b]_4, c, 2d, [e]_4) \not= ([a]_4, [b]_4, c, 2d, [e]_4), $ we see that $\mathbf g$ is 2-centralizing but not 1-centralizing.}
		\end{proof}
		
		\begin{prop}
		    $C_r^*(S)$ is not maximal abelian.
		\end{prop}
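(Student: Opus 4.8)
The plan is to disprove that $B := C^*_r(S,c)$ is maximal abelian in $A := C^*_r(G,c)$ by exhibiting an element $h \in A \setminus B$ that commutes with every element of $B$; recall that if $B$ were maximal abelian then every element of $A$ commuting with $B$ would lie in $B$ (a self-adjoint element commuting with $B$ generates together with $B$ an abelian $C^*$-algebra, which maximality forces to be $B$, and a general element of the relative commutant splits into two such), so such an $h$ is enough. The candidate is suggested by the failure of the ``immediately centralizing'' condition: start from $\mathbf g := ([0]_4, [0]_4, 0, 0, [1]_4)$, the element shown above to be $2$-centralizing but not $1$-centralizing, note that its $S$-conjugacy orbit is finite, and take $h$ to be supported on that orbit. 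Since $G$ is discrete, $h$ will automatically lie in $C_c(G,c) \subseteq A$.

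First I would determine the orbit. A direct computation of $\mathbf s\mathbf g\mathbf s^{-1}$ for $\mathbf s = ([s_1]_4,[s_2]_4,s_3,s_4,[s_5]_4) \in S$, using that $s_4$ is even, yields $\mathbf s\mathbf g\mathbf s^{-1} = ([2s_3]_4, [0]_4, 0, 0, [1]_4)$. Hence the orbit of $\mathbf g$ under $S$-conjugation is exactly $\{\mathbf g, \mathbf g'\}$, where $\mathbf g' := ([2]_4, [0]_4, 0, 0, [1]_4)$, with conjugation by $\mathbf s$ fixing $\mathbf g$ when $s_3$ is even and sending $\mathbf g \mapsto \mathbf g'$ when $s_3$ is odd. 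It is convenient to record that $\mathbf g' = \mathbf g\mathbf z$ for $\mathbf z := ([2]_4,[0]_4,0,0,[0]_4)$, that $\mathbf z$ is central in $G$ with $\mathbf z^2 = \mathbf 0$, and hence that conjugation by any $\mathbf s\in S$ merely permutes $\{\mathbf g, \mathbf g'\}$; in particular $\mathbf s\mathbf g'\mathbf s^{-1}$ is $\mathbf g'$ when $s_3$ is even and $\mathbf g$ when $s_3$ is odd.

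Next I would set $h := \delta_{\mathbf g} + \delta_{\mathbf g'} \in C_c(G,c)$ and check $\delta_{\mathbf s}\ast h = h\ast\delta_{\mathbf s}$ for every $\mathbf s\in S$; by linearity and continuity of multiplication this gives that $h$ commutes with $C_c(S,c)$ and hence with $B$. In the twisted group algebra $\delta_{\mathbf s}\ast\delta_{\mathbf a} = c(\mathbf s,\mathbf a)\,\delta_{\mathbf s\mathbf a}$ and $\delta_{\mathbf a}\ast\delta_{\mathbf s} = c(\mathbf a,\mathbf s)\,\delta_{\mathbf a\mathbf s}$, and all the cocycle values occurring here are $1$: both $\mathbf g$ and $\mathbf g'$ have fourth coordinate $0$, so $c(\mathbf g,\cdot) = c(\mathbf g',\cdot) \equiv 1$, while $\mathbf s\in S$ has even fourth coordinate, so $c(\mathbf s,\cdot)\equiv 1$ as well. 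Thus $\delta_{\mathbf s}\ast h = \delta_{\mathbf s\mathbf g} + \delta_{\mathbf s\mathbf g'}$ and $h\ast\delta_{\mathbf s} = \delta_{\mathbf g\mathbf s} + \delta_{\mathbf g'\mathbf s}$, and since $\{\mathbf s\mathbf g, \mathbf s\mathbf g'\} = \{(\mathbf s\mathbf g\mathbf s^{-1})\mathbf s,\ (\mathbf s\mathbf g'\mathbf s^{-1})\mathbf s\} = \{\mathbf g\mathbf s, \mathbf g'\mathbf s\}$ by the orbit analysis, the two sides agree. Finally $h \notin B$: by Lemma~\ref{lem:Bs-elts-have-supp-in-S}, an element of $A$ lies in $B$ only if its image in $C_0(G)$ is supported in $S$, but the image of $h$ is nonzero at $\mathbf g$, which lies outside $S$ because its fifth coordinate $[1]_4$ is not in $\{[0]_4,[2]_4\}$. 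Hence $B \subsetneq B'\cap A$, so $C^*_r(S,c)$ is not maximal abelian. The only genuinely computational steps are the conjugation-orbit calculation and the observation that the $2$-cocycle contributes trivially to all the products involved; once those are in hand, the commutation identity and the conclusion follow at once.
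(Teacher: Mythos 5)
Your proposal is correct and takes essentially the same route as the paper: you use the identical element $h=\delta_{\mathbf g}+\delta_{\mathbf g'}$ (the paper's $\delta_{\boldsymbol\nu}+\delta_{\boldsymbol\mu}$), observe that all relevant cocycle values are $1$, and verify commutation with $C_c(S,c)$ before invoking the support lemma to see $h\notin C^*_r(S,c)$. The only difference is organizational -- you check commutation against the generators $\delta_{\mathbf s}$ via the $S$-conjugation orbit $\{\mathbf g,\mathbf g'\}$, whereas the paper computes $h\ast\varphi$ and $\varphi\ast h$ for a general $\varphi\in C_c(S)$ and runs a case analysis -- and both verifications are sound.
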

		
		\begin{proof}
		    Define the function $h\colon  G  \to \mathbb{C}$ by $h = \delta_{\mathbf{\nu}} + \delta_{\mathbf{\mu}}$
		    where $\mathbf{\nu} := (0,0,0,0,[1]_4)$ and $\mathbf\mu := ([2]_4,0,0,0,[1]_4)$. Then $h$ does not have support in $S$, so $h\notin C_{r}^{*}(S,c)$, but we will prove that it commutes with every function in $ C_c(S)$.
		    
			Suppose $\varphi \in C_c(S)$ and $\mathbf{a}=([a]_4,[b]_4,c,d,[e]_4) \in  G $. Then
			\begin{multline*}
			h *  \varphi (\mathbf{a}) = \sum_{\mathbf\alpha \mathbf\beta = \mathbf{a}} h(\mathbf\alpha)\varphi(\mathbf\beta) c(\mathbf\alpha, \mathbf\beta) = \varphi(\mathbf\nu\inv \mathbf{a})c(\mathbf\nu,\mathbf\nu\inv \mathbf{a}) + \varphi(\mathbf\mu\inv \mathbf a)c(\mathbf\mu, \mathbf\mu\inv \mathbf a).
			\end{multline*}
			Notice that $c(\mathbf\nu,\mathbf\nu\inv t) = c(\mathbf\mu,\mathbf\mu\inv \mathbf a) =1$ since the fourth component of $\mathbf\nu$ and $\mathbf\mu$ is $0$. Thus 
			\begin{equation*}
			h *  \varphi (\mathbf a) = \varphi(\mathbf\nu\inv \mathbf a) + \varphi(\mathbf\mu\inv \mathbf a).
			\end{equation*}
			
			On the other hand, 
			\begin{equation*}
			\varphi*h(\mathbf a) = \sum_{\mathbf\alpha'\mathbf\beta'=\mathbf a} \varphi(\mathbf\alpha') h(\mathbf\beta') c(\mathbf\alpha',\mathbf\beta') = \varphi(\mathbf a \mathbf\nu\inv)c(\mathbf a \mathbf\nu\inv,\mathbf\nu) + \varphi(\mathbf a \mathbf\mu\inv)c(\mathbf a \mathbf\mu\inv,\mathbf\mu).
			\end{equation*}
			Again, $c(\mathbf a \mathbf\nu\inv,\mathbf\nu) = c(\mathbf a \mathbf\mu\inv,\mathbf\mu) = 1$ since the first components of $\mathbf\nu$ and $\mathbf\mu$ are even. Thus 
			\begin{equation*}
			\varphi*h(\mathbf a) = \varphi(\mathbf a \mathbf\nu\inv) + \varphi(\mathbf a \mathbf\mu\inv)
			\end{equation*}
			
			Since $\mathbf\nu\inv = ([0]_4,[0]_4,0,0,[3]_4)$ and $\mathbf\mu\inv = ([2]_4,[0]_4,0,0,[3]_4)$, then 
			\begin{eqnarray*}
			\mathbf\nu\inv \mathbf a &=& ([a+6c]_4, [b+6d]_4,c,d,[e+3]_4) \\
			\mathbf a \mathbf\nu\inv &=& ([a]_4,[b]_4,c,d,[e+3]_4) \\
			\mathbf\mu\inv \mathbf a &=& ([a+2+6c]_4,[b+6d]_4,c,d,[e+3]_4)\\
			\mathbf a \mathbf\mu\inv &=& ([a+2]_4,[b]_4,c,d,[e+3]_4).
			\end{eqnarray*}
			
			Since $\varphi$ is supported on $S$, then $\varphi*h(\mathbf a)$ is nonzero only if $\mathbf a \mathbf\nu\inv$ or $\mathbf a \mathbf\mu\inv$ is in $S$, i.e., if $d \in 2\mathbb{Z}$ and $e \in \{[1]_4,[3]_4\}$. Similarly, $h*\varphi(\mathbf a)$ is nonzero only if $\mathbf\nu\inv \mathbf a$ or $\mathbf\mu\inv \mathbf a$ is in $S$, i.e., if $d \in 2\mathbb{Z}$ and $e \in \{[1]_4,[3]_4\}$. 
			
			Thus if $d \notin 2\mathbb{Z}$ or $e \notin \{[1]_4,[3]_4\}$, then $\varphi * h(\mathbf a) = 0 = h*\varphi(\mathbf a)$. Now let us consider the case when $d \in 2\mathbb{Z}$ and $e \in \{[1]_4,[3]_4\}$). 
			
			If $c=2k$ for some $k \in \mathbb{Z}$, then
			\begin{align*}
			\mathbf\nu\inv \mathbf a &= (([a+6(2k)]_4, [b+6d]_4,2k,d,[e+3]_4) \\&= ([a]_4, [b]_4,2k,d,[e+3]_4) = \mathbf a \mathbf\nu\inv
			\end{align*}
			and
			\begin{align*}
			\mathbf\mu\inv \mathbf a &= ([a+2+6(2k)]_4,[b+6d]_4,2k,d,[e+3]_4) \\&= ([a+2]_4,[b]_4,2k, d,[e+3]_4) = \mathbf a \mathbf\mu\inv
			.
			\end{align*}
			
			On the other hand, if $c=2k+1$ for some $k \in \mathbb{Z}$, then
			\begin{align*}
			\mathbf\nu\inv \mathbf a &= ([a+6(2k+1)]_4, [b+6d]_4,2k+1,d,[e+3]_4) \\&= ([a+2]_4, [b]_4, 2k+1,d,[e+3]_4) = \mathbf a \mathbf\mu\inv
			\end{align*}
			and
			\begin{align*}
			\mathbf\mu\inv \mathbf a &= ([a+2+6(2k+1)]_4,[b+6d]_4,2k+1,d,[e+3]_4) \\&= ([a]_4,[b]_4,2k+1,d,[e+3]_4) = \mathbf a \mathbf\nu\inv
			.
			\end{align*}
			
			Thus, in each case, $\varphi * h(\mathbf a) = h*\varphi(\mathbf a)$ for all $\mathbf a \in  G $ 
		    and all $\varphi\in C_{c}(S)$, and hence all $\varphi\in C_{r}^{*}(S)$.
		\end{proof}

\appendix

\section{Proof of Lemma \ref{lem:eta-in-S}}
\label{app:lemma-4-8}

We still owe the reader the proof of the following lemma:
\newtheorem*{lemma-4-8}{Lemma \ref{lem:eta-in-S}}
\begin{lemma-4-8}
    \emph{Suppose $\G$ is an {\'e}tale groupoid, $c$ is a 2-cocyle on $\G$, and 
	$\S$ is maximal among abelian subgroupoids of $\Iso (\G)$ on which $c$ is symmetric. Let $u$ be a unit. If $\eta\in\G^{u}_{u}$ satisfies $\eta s = s \eta$ and $c(s, \eta) = c(\eta, s)$ for all $s\in \S^{u}_{u}$, then $\eta\in\S$.}
\end{lemma-4-8}

To do so, we require a few smaller results.

	\begin{lemma}\label{lem:eta-or-etainv}
		Suppose $\G$ is a groupoid with a 2-cocycle $c$ and $u$ is  a unit in $\G$. If $\eta,\xi\in\G^{u}_{u}$ commute, then the following statements are equivalent:
		\begin{align}
			\label{eq:c-s-eta}			c(\xi , \eta) &= c(\eta, \xi)\\
			\label{eq:c-seta-etainv}	c(\xi\eta, \eta\inv ) &= c(\eta\inv , \xi\eta)\\
			\label{eq:c-s-etainv}		c(\xi , \eta\inv) &= c(\eta\inv, \xi)\\
			\label{eq:c-setainv-eta}	c(\xi\eta\inv , \eta) &= c(\eta, \xi\eta\inv )
		\end{align}
	\end{lemma}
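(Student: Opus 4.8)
Since all four statements involve a fixed commuting pair $\eta, \xi \in \G^u_u$ and the cocycle $c$, the natural strategy is to prove the cyclic chain of implications $\eqref{eq:c-s-eta} \Rightarrow \eqref{eq:c-seta-etainv} \Rightarrow \eqref{eq:c-s-etainv} \Rightarrow \eqref{eq:c-setainv-eta} \Rightarrow \eqref{eq:c-s-eta}$ (or some convenient reordering), each step being a short manipulation of the cocycle identity $c(g,hk)c(h,k) = c(gh,k)c(g,h)$ together with the normalization $c(g,s(g)) = c(r(g),g) = 1$ and Lemma~\ref{lem:c-g-ginv}, which gives $c(\eta,\eta\inv) = c(\eta\inv,\eta)$. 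The key mechanism throughout is that, because $\eta$ and $\xi$ commute, expressions like $\xi\eta$ and $\eta\xi$ are interchangeable, so one can ``slide'' $\eta$ or $\eta\inv$ past $\xi$ inside the arguments of $c$; the cocycle identity then converts a statement about $c(\,\cdot\,,\eta)$ into one about $c(\,\cdot\,,\eta\inv)$ or $c(\xi\eta,\eta\inv)$, etc.

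\textbf{Concretely, for $\eqref{eq:c-s-eta} \Leftrightarrow \eqref{eq:c-seta-etainv}$:} apply the cocycle identity to the composable triple $(\xi,\eta,\eta\inv)$, obtaining
\[
 c(\xi, \eta\eta\inv)\, c(\eta,\eta\inv) = c(\xi\eta,\eta\inv)\, c(\xi,\eta).
\]
Since $\eta\eta\inv = u = s(\xi) = r(\xi)$, the left factor $c(\xi,\eta\eta\inv) = 1$, so this reads $c(\eta,\eta\inv) = c(\xi\eta,\eta\inv)\, c(\xi,\eta)$. Do the same with the triple $(\eta\inv, \eta, \xi)$ — here we use $\eta\xi = \xi\eta$ to write $\eta\inv\cdot\eta\xi = \xi$ — and with the triple $(\eta\inv, \xi\eta, \cdot)$ appropriately, to likewise express $c(\eta\inv, \xi\eta)$ and $c(\eta,\xi)$ in terms of $c(\eta\inv,\eta)$ and cocycle values that cancel. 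Comparing the two resulting expressions and invoking Lemma~\ref{lem:c-g-ginv} to identify $c(\eta,\eta\inv)$ with $c(\eta\inv,\eta)$ yields the equivalence. The implication $\eqref{eq:c-s-eta} \Leftrightarrow \eqref{eq:c-s-etainv}$ is similar but starts from the triple $(\xi, \eta\inv, \eta)$ together with the triple $(\eta, \eta\inv, \xi)$, again using commutativity to rewrite $\eta\cdot\eta\inv\xi = \xi$. Finally $\eqref{eq:c-s-etainv} \Leftrightarrow \eqref{eq:c-setainv-eta}$ follows from $\eqref{eq:c-s-eta} \Leftrightarrow \eqref{eq:c-seta-etainv}$ by replacing $\eta$ with $\eta\inv$ throughout (valid since $\eta\inv \in \G^u_u$ also commutes with $\xi$, and $(\eta\inv)\inv = \eta$).

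\textbf{The main obstacle} I anticipate is purely bookkeeping: making sure at each application of the cocycle identity that the chosen triple is actually composable (each uses that $\eta,\eta\inv,\xi$ all have source and range $u$, so this is automatic here), and tracking which cocycle values are forced to be $1$ by the normalization versus which must be cancelled against each other or rewritten via Lemma~\ref{lem:c-g-ginv}. There is no conceptual difficulty — the whole lemma is a finite collection of three-term cocycle relations — but the symmetric roles of $\eta$ and $\eta\inv$ and of the two ``orderings'' $\xi\eta$ versus $\eta\xi$ make it easy to lose a bar or a factor, so I would write each of the three equivalences out carefully and then note that the fourth follows by the $\eta \leftrightarrow \eta\inv$ symmetry. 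This is exactly the kind of ``careful applications of induction [here, just iteration] and not very enlightening'' computation that the authors flagged, so it belongs in the appendix alongside the proof of Lemma~\ref{lem:eta-in-S} that it feeds into.
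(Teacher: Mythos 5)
Your first step is fine and is essentially the paper's argument for \eqref{eq:c-s-eta}$\iff$\eqref{eq:c-seta-etainv}: the triples $(\xi,\eta,\eta\inv)$ and $(\eta\inv,\eta,\xi)$ give $c(\xi\eta,\eta\inv)\,c(\xi,\eta)=c(\eta,\eta\inv)$ and $c(\eta\inv,\xi\eta)\,c(\eta,\xi)=c(\eta\inv,\eta)$, and Lemma~\ref{lem:c-g-ginv} equates the right-hand sides. The gap is in your second step. The triples $(\xi,\eta\inv,\eta)$ and $(\eta,\eta\inv,\xi)$ yield
\[
c(\xi\eta\inv,\eta)\,c(\xi,\eta\inv)
=c(\eta\inv,\eta)
=c(\eta,\eta\inv)
=c(\eta,\xi\eta\inv)\,c(\eta\inv,\xi),
\]
and every cocycle value appearing here is one of $c(\xi,\eta\inv)$, $c(\eta\inv,\xi)$, $c(\xi\eta\inv,\eta)$, $c(\eta,\xi\eta\inv)$, or $c(\eta^{\pm1},\eta^{\mp1})$; the quantities $c(\xi,\eta)$ and $c(\eta,\xi)$ from \eqref{eq:c-s-eta} never enter. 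So what these two triples actually prove is \eqref{eq:c-s-etainv}$\iff$\eqref{eq:c-setainv-eta} --- which is also exactly what your final $\eta\leftrightarrow\eta\inv$ substitution gives. As written, you therefore have the two pairs $\{\eqref{eq:c-s-eta},\eqref{eq:c-seta-etainv}\}$ and $\{\eqref{eq:c-s-etainv},\eqref{eq:c-setainv-eta}\}$, each internally equivalent, but no implication connecting one pair to the other, and the lemma is not proved.

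The missing bridge is the paper's step \eqref{eq:c-seta-etainv}$\implies$\eqref{eq:c-s-etainv}: apply the cocycle identity to the triple $(\eta\inv,\xi\eta,\eta\inv)$ and use that commutativity gives $\eta\inv(\xi\eta)=\xi=(\xi\eta)\eta\inv$, so that
\[
c(\eta\inv,\xi)\,c(\xi\eta,\eta\inv)
=
c(\xi,\eta\inv)\,c(\eta\inv,\xi\eta).
\]
Given \eqref{eq:c-seta-etainv}, the factors $c(\xi\eta,\eta\inv)$ and $c(\eta\inv,\xi\eta)$ agree and cancel (they are unimodular), yielding \eqref{eq:c-s-etainv}; the same identity gives the converse. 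With this one additional triple your scheme closes up into \eqref{eq:c-s-eta}$\iff$\eqref{eq:c-seta-etainv}$\iff$\eqref{eq:c-s-etainv}$\iff$\eqref{eq:c-setainv-eta}, which is precisely the route the paper takes.
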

	
	\begin{proof}
		We will show \mbox{\eqref{eq:c-s-eta}$\implies$\eqref{eq:c-seta-etainv}$\implies$\eqref{eq:c-s-etainv}.} Replacing $\eta$  by $\eta\inv$, the same argument also gives \mbox{\eqref{eq:c-s-etainv}$\implies$\eqref{eq:c-setainv-eta}$\implies$\eqref{eq:c-s-eta}} also, and so all conditions are equivalent.
			
		Assume  \eqref{eq:c-s-eta} holds.  Since $c(u, \xi) = c(\xi, u) = 1$, we use the cocycle condition, and our hypotheses that $\xi \eta = \eta \xi$ and $c(\xi, \eta) = c(\eta, \xi)$, to see that
		\begin{align*}
			c(\xi\eta,\eta\inv)
			&= c(\xi,\eta\eta\inv)\,c(\eta,\eta\inv)\,\overline{c(\xi,\eta)}
		   = c(\eta\inv,\eta)\,\overline{c(\xi,\eta)} 
		   \\
			& = c(\eta\inv\eta,\xi)\,c(\eta\inv,\eta)\,\overline{c(\eta,\xi)}
			\\
			&=c(\eta\inv,\eta\xi)
			=c(\eta\inv,\xi\eta).
		\end{align*}
Thus \eqref{eq:c-s-eta}$\implies$\eqref{eq:c-seta-etainv}.
		
		Next, assume Equation \eqref{eq:c-seta-etainv}, i.e.\@ $c(\xi\eta, \eta\inv ) = c(\eta\inv , \xi\eta)$. We compute
		\begin{align*}
			c(\eta\inv,\xi)
			&=
			c(\eta\inv, (\xi\eta)\eta\inv)
			=
			c(\eta\inv(\xi\eta), \eta\inv)\,c(\eta\inv, \xi\eta)\,\overline{c(\xi\eta,\eta\inv)}
			\\
			&=	c(\eta\inv(\xi\eta), \eta\inv)
			= c(\xi,\eta\inv),
		\end{align*}
		which is exactly Equation \eqref{eq:c-s-etainv}.	This concludes the proof.
	\end{proof}

\begin{lemma}\label{lem:eta-equivalences}
		Suppose $\G$ is a groupoid with a 2-cocycle $c$ and $u$ is a unit in $\G$. Assume further that $\S$ is an abelian subgroupoid of $\G$ on which $c$ is symmetric. If  $\eta\in\G^{u}_{u}$ satisfies $\eta s = s \eta$ for some $s\in \S^{u}_{u}$, the following are equivalent:
		\begin{enumerate}[label=\arabic*., font=\upshape, ref=\arabic*]
			\item\label{hyp:c-s-eta}	$c(s , \eta) = c(\eta, s)$.
			\item\label{hyp:c-s-teta}	For all $t\in \S^{u}_{u}$, we have $c(s , \eta t) = c(\eta t, s)$.
			\item\label{hyp:ex-t}		For some $t\in \S^{u}_{u}$, we have $c(s , \eta t) = c(\eta t, s)$.
		\end{enumerate}
	\end{lemma}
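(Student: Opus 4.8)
The plan is to reduce everything to the single ``master equivalence''
\[
    c(s,\eta t) = c(\eta t, s) \iff c(s,\eta) = c(\eta,s),
\]
valid for \emph{each} fixed $t\in\S^{u}_{u}$. Granting this, all three listed equivalences follow at once: \ref{hyp:c-s-eta}$\Rightarrow$\ref{hyp:c-s-teta} because under \ref{hyp:c-s-eta} the right-hand side of the master equivalence holds, so the left-hand side holds for \emph{every} $t$; \ref{hyp:c-s-teta}$\Rightarrow$\ref{hyp:ex-t} trivially, the index set $\S^{u}_{u}$ being nonempty (it contains $s$); and \ref{hyp:ex-t}$\Rightarrow$\ref{hyp:c-s-eta} by applying the master equivalence to the particular $t$ furnished by \ref{hyp:ex-t}.

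To prove the master equivalence I would first note that every product written below is composable, since $s,t,\eta$ all lie in $\G^{u}_{u}$, and then apply the cocycle identity three times:
\begin{itemize}
    \item with $(g,h,k)=(s,\eta,t)$:\quad $c(s,\eta t)\,c(\eta,t) = c(s\eta,t)\,c(s,\eta)$;
    \item with $(g,h,k)=(\eta,t,s)$:\quad $c(\eta,ts)\,c(t,s) = c(\eta t,s)\,c(\eta,t)$;
    \item with $(g,h,k)=(\eta,s,t)$:\quad $c(\eta,st)\,c(s,t) = c(\eta s,t)\,c(\eta,s)$.
\end{itemize}
Since $\S$ is abelian, $ts=st$, so $c(\eta,ts)=c(\eta,st)$; since $c$ is symmetric on $\S$, $c(t,s)=c(s,t)$; and since $\eta s=s\eta$, $c(\eta s,t)=c(s\eta,t)$. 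Substituting the third identity into the second and cancelling the equal factors $c(t,s)$ and $c(s,t)$ gives $c(\eta t,s)\,c(\eta,t) = c(s\eta,t)\,c(\eta,s)$. Comparing this with the first identity $c(s,\eta t)\,c(\eta,t) = c(s\eta,t)\,c(s,\eta)$ and cancelling the common unit-modulus factors $c(\eta,t)$ and $c(s\eta,t)$ yields $c(s,\eta t)/c(\eta t,s) = c(s,\eta)/c(\eta,s)$, which is exactly the master equivalence.

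The computation is short, so the only real issue is bookkeeping: we are \emph{not} given that $\eta$ commutes with $t$ (or with $\eta t$), only that $\eta s=s\eta$, so the three applications of the cocycle identity must be arranged so that $\eta$ is ever transposed only past $s$, while $s$ and $t$ are interchanged using the abelianness of $\S$ and the symmetry of $c$ on $\S$. With this arrangement there is no genuine obstacle.
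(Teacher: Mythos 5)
Your proof is correct and uses essentially the same ingredients as the paper's: the same three applications of the cocycle identity (with triples $(s,\eta,t)$, $(\eta,t,s)$, $(\eta,s,t)$), combined with abelianness of $\S$, symmetry of $c$ on $\S$, and $\eta s = s\eta$. The only difference is organizational — you package the computation as a single ratio identity $c(s,\eta t)/c(\eta t,s) = c(s,\eta)/c(\eta,s)$ from which all three implications drop out, whereas the paper runs the implications $1\Rightarrow 2$ and $3\Rightarrow 1$ as two separate directed calculations; both are fine.
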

	
\begin{proof}
	Suppose  Hypothesis \ref{hyp:c-s-eta} holds. Then
		\begin{align*}
			c(s,\eta t) &=  c(s\eta,t) \, c(s,\eta) \, \overline{c(\eta, t)}
		    =
			c(\eta s,t) \, c(\eta,s) \, \overline{c(\eta, t)}			
			\\
			&=
			\bigl(
				c(\eta, st) \, c(s, t) \, \overline{c(\eta, s)}
			\bigr)
		 \, c(\eta,s) \, \overline{c(\eta, t)}	
		=
			c(\eta, ts) \, c(t, s) \, \overline{c(\eta, t)}	
			\\
			&=
			c(\eta t,s).
		\end{align*}
		The penultimate equality follows from the fact that $\S$ is abelian and  $c$ is symmetric on $\S$.
		
		To see that Hypothesis \ref{hyp:ex-t} implies Hypothesis \ref{hyp:c-s-eta}, note that the cocycle condition implies that for any $t \in \G^u,$
		\begin{align*}
			c(s,\eta) =&\overline{c(s\eta,t)}  \,  c(s,\eta t)  \,  c(\eta, t), \quad \text{ and}\\
			c(\eta,s) =& \overline{c(\eta s,t)} \, c(\eta, st)\,c( s,t).
	    \end{align*}
		Since $\S$ is abelian and $c$ is symmetric on $\S$, we can rewrite the second equality as follows, using the cocyle condition again for the last step:
		\[	c(\eta,s) = \overline{c(\eta s, t)} c(\eta, ts)\, c(t,s) = \overline{c(\eta s, t)} c(\eta t, s) c(\eta, t).
		\]
	Hypothesis \ref{hyp:ex-t} and the assumption that $s \eta = \eta s$ imply that the right hand sides of these equations agree; that is, Hypothesis \ref{hyp:ex-t} implies Hypothesis \ref{hyp:c-s-eta}.
	\end{proof}

	\begin{proof}[Proof of Lemma~\ref{lem:eta-in-S}]
		First of all, note that Lemma \ref{lem:eta-equivalences} shows that $c(s , \eta t) = c(\eta t, s)$ for any $s, t\in \S^u_u$.  Moreover, the fact that $\eta^{-1}s = s\eta^{-1}$ for any $s \in \S^u_u,$ together with the cocycle condition and Equation \eqref{eq:c-s-etainv}, imply that
		
		\begin{equation}\label{eq:c-s-teta}
			c(t\eta\inv,\eta) = c(\eta,t\eta\inv)
		\end{equation}
		for all $t\in\S^{u}_{u}$.

		Let $\T$ be the subgroupoid of $\Iso(\G)$ generated by $\S$ and $\eta$; note that $\T$ is abelian. We want to show that $c$ is symmetric on $\T$, so that maximality of $\S$ implies $\S=\T$. Since an arbitrary element of $\T$ is either in $\S$ or of the form $s\eta^{k}$ for $s\in\S^{u}_{u}$ and $k\in\mathbb{Z}$, we have to show 
		\begin{equation}\label{eq:c-sym-on-T}
			c(s\eta^{k}, t\eta^{n}) \overset{!}{=} c(t\eta^{n}, s\eta^{k})
		\end{equation}
		for all $s,t\in\S^{u}_{u}$ and all $n,k\in\mathbb{Z}$. 
		
		First, we prove the case $n=1$, $t=u$, and $k\in\mathbb{N}_{0}$: the base case $k=0$ is one of the assumptions.
		We compute
		\begin{alignat*}{3}
			c(\eta,s\eta^{k+1})	
			&= c(\eta (s\eta^{k}),\eta) \, c(\eta,s\eta^{k}) \,\overline{c(s\eta^{k},\eta)}
			&\quad\text{(cocycle condition)}
			\\
			&= c(\eta (s\eta^{k}),\eta) \, c(s\eta^{k},\eta) \,\overline{c(s\eta^{k},\eta)}
			&\quad\text{(induction hypothesis)}
			\\
			&=c(\eta (s\eta^{k}),\eta) = c(s\eta^{k+1},\eta)
			,
		\end{alignat*}
		so we have shown that for all $s\in\S^{u}_{u}$ and $k\in\mathbb{N}_{0}$,
		\begin{equation}\label{eq:n=1-t=u}
			c(s\eta^{k}, \eta)=c(\eta, s\eta^{k}). 
		\end{equation}

		Next, we want to show the case $n=0$ and $k\in\mathbb{N}_{0}$: the base case $k=0$ is true since $c$ is assumed symmetric on $\S$. We compute		
		\begin{align}
			c(s\eta^{k+1},t)
			&= c(\eta,(s\eta^{k})t) \, c(s\eta^{k},t) \, \overline{c(\eta,s\eta^{k})}
			= c(\eta,(ts)\eta^{k}) \, c(s\eta^{k},t) \, \overline{c(\eta,s\eta^{k})}
			\notag
			\\
			&= c((ts)\eta^{k},\eta) \, c(s\eta^{k},t) \, \overline{c(s\eta^{k},\eta)}
			= c(t(s\eta^{k}),\eta) \, c(t,s\eta^{k}) \, \overline{c(s\eta^{k},\eta)}
			\notag
		\\ &	=
			c(t,s\eta^{k+1}). \notag
		\end{align}
		Thus, we have shown for all $s,t\in\S^{u}_{u}$ and $k\in\mathbb{N}_{0}$,
		\begin{equation}\label{eq:n=0}
			c(s\eta^{k}, t)=c(t, s\eta^{k}).
		\end{equation}

		Now, let us show Equation \eqref{eq:c-sym-on-T} for arbitrary $n,k\in\mathbb{N}_{0}$ by induction on $n$. Equation \eqref{eq:n=0}, which holds for all $k\in\mathbb{N}_{0}$, is the base case $n=0$.
		We compute
		\begin{alignat*}{3}
			c(s\eta^{k}, t\eta^{n+1})
			&
			= c(s\eta^{k}(t\eta^{n}), \eta)\,c(s\eta^{k},t\eta^{n})\,\overline{c(t\eta^{n}, \eta)}
			&\quad\text{(cocycle condition)}
			\\
			&= c(\eta, st\eta^{k+n})\,c(s\eta^{k},t\eta^{n})\,\overline{c(\eta,t\eta^{n})}
			&\quad\text{(Eq.~\eqref{eq:n=1-t=u} twice)}
			\\
			&= c(\eta, st\eta^{k+n})\,c(t\eta^{n},s\eta^{k})\,\overline{c(\eta,t\eta^{n})}
			&\quad\text{(induction hypothesis)}
			\\
			&=
			c(\eta,(t\eta^{n})s\eta^{k})\,c(t\eta^{n},s\eta^{k})\,\overline{c(\eta,t\eta^{n})}
			&
			\\
			&=
			c(t\eta^{n+1},s\eta^{k}).
			&\quad\text{(cocycle condition)}
		\end{alignat*}

		To sum up: if $\eta s=s\eta$ and $c(s,\eta)=c(\eta,s)$ for every $s\in\S^{u}_{u}$, then
		\begin{alignat*}{2}
		    \forall \  s,t\in\S^{u}_{u}, \forall \  n,k\in\mathbb{N}_{0},
		    \qquad
			c(s\eta^{k}, t\eta^{n})
			=
			c(t\eta^{n},s\eta^{k}).
		\end{alignat*}
		Since $\eta\inv s=s\eta\inv$ and since  Lemma \ref{lem:eta-or-etainv} implies $c(s,\eta\inv)=c(\eta\inv,s)$, the same proof shows
		\begin{align*}
		    \forall \  s,t\in\S^{u}_{u}, \forall \  n,k\in\mathbb{N}_{0},
		    \qquad
			c(s(\eta\inv)^{k}, t(\eta\inv)^{n})
			=
			c(t(\eta\inv)^{n},s(\eta\inv)^{k}),
		\end{align*}
		or in other words, we have for all $s,t\in\S^{u}_{u}$ and $n,k\in\mathbb{N}_{0}$
		\begin{align}\label{eq:c-sym-on-T-negative-exp}
			c(s\eta^{-k}, t\eta^{-n})
			=
			c(t\eta^{-n},s\eta^{-k})
			.
		\end{align}
		It remains to check that
	$c(s\eta^{k}, t\eta^{-n})
			=
			c(t\eta^{-n},s\eta^{k})
		$ for $n,k\in\mathbb{N}^{\times}$,
		which we do by another induction.	
		We check the base case $n=1$: 
 		\begin{align*}
 			&c(s\eta^{k}, t\eta\inv)
 			=
 			c(s\eta^{k-1}, \eta t\eta\inv)\,c(\eta, t\eta\inv)\,\overline{c(s\eta^{k-1}, \eta)}
 			\\
 			&\quad=
 			c(s\eta^{k-1}, t)\,c(\eta, t\eta\inv)\,\overline{c( \eta,s\eta^{k-1})}
 			=
  			c(s\eta^{k-1}, t)\,c(t\eta\inv, \eta)\,\overline{c( \eta,s\eta^{k-1})}
   			\\
   			&\quad=
   			c(t,s\eta^{k-1})\,c(t\eta\inv, \eta)\,\overline{c( \eta,s\eta^{k-1})}
   		=			
			c(t\eta\inv,s\eta^{k})
			.
 		\end{align*}

		Our final induction hypothesis is:
		\begin{equation*}
		    \text{For a fixed $n\geq 1$ and all $k\in\mathbb{N}_{0}, s,t\in\S^{u}_{u}$,}
		    \quad
			c(s\eta^{k}, t\eta^{-n})
			=
			c(t\eta^{-n},s\eta^{k})
			.
		\end{equation*}
		We compute with the cocycle condition
		\begin{alignat*}{2}
			c(s\eta^{k}, t\eta^{-(n+1)})
			&=
			c((s\eta^{k})\eta\inv,t\eta^{-n})\,c(s\eta^{k},\eta\inv)\,\overline{c(\eta\inv,t\eta^{-n} )}
			&
			\\
			&=
			c(s\eta^{k-1},t\eta^{-n})\,c(s\eta^{k},\eta\inv)\,\overline{c(t\eta^{-n},\eta\inv )}
			&\text{(Eq.~\eqref{eq:c-sym-on-T-negative-exp})}
			\\
			&=
			c(t\eta^{-n},\eta\inv s\eta^{k})\,c(\eta\inv, s\eta^{k})\,\overline{c(t\eta^{-n},\eta\inv )}
            &
			\\
			&=					
			c(t\eta^{-(n+1)},s\eta^{k}).
			&\text{(cocycle condition)}
		\end{alignat*}
		This concludes our proof.
	\end{proof}

\section*{Acknowledgments}
We thank the mathematics departments at Northwestern University and Fitchburg State University for support during the visits of the research group to these institutions. We also thank Aidan Sims for his contributions to Proposition \ref{prop:weyl-picture}.

\bibliographystyle{amsalpha} 
\bibliography{eagbib}


\end{document}